\newtheorem{theorem}{Theorem}[section]
\newtheorem{lemma}[theorem]{Lemma}
\newtheorem{prop}[theorem]{Proposition}
\newtheorem{cor}[theorem]{Corollary}
\theoremstyle{definition}
\newtheorem{definition}[theorem]{Definition}
\newtheorem{rmk}[theorem]{Remark}
\newtheorem{example}[theorem]{Example}
\newtheorem*{conjecture*}{Conjecture}
\newtheoremstyle{TheoremNum}
        {9pt}{9pt}              
        {\itshape}                      
        {}                              
        {\bfseries}                     
        {.}                             
        { }                             
        {\thmname{#1}\thmnote{ \bfseries #3}}
\theoremstyle{TheoremNum}
\newtheorem{repthrm}{Theorem}
\newcommand\Hy{\mathbb{H}}
\newcommand\Z{\mathbb{Z}}
\newcommand\Q{\mathbb{Q}}
\newcommand\R{\mathbb{R}}
\newcommand\C{\mathbb{C}}
\newcommand\cO{\mathcal{O}}
\newcommand\cQ{\mathcal{Q}}
\newcommand\area{\mathrm{area}}
\newcommand\sys{\mathrm{sys}}
\newcommand\vol{\mathrm{vol}}
\DeclareMathOperator{\SL}{SL}
\DeclareMathOperator{\PSL}{PSL}
\newcommand{\Sph}{\mathbb{S}}
\newcommand{\N}{\mathbb{N}}
\newcommand{\F}{\mathbb{F}}
\newcommand{\frakR}{\mathfrak{R}}
\newcommand{\rO}{\mathcal{O}}
\newcommand{\cF}{\mathcal{F}}
\newcommand{\cH}{\mathcal{H}}
\newcommand{\cK}{\mathcal{K}}
\newcommand{\cT}{\mathcal{T}}
\newcommand{\fT}{\mathfrak{T}}
\newcommand{\rT}{\mathrm{T}}
\newcommand{\defeq}{\vcentcolon=}
\newcommand{\PSLTR}{\mathrm{PSL}({2},{\mathbb{R}})}
\newcommand{\PGLTR}{\mathrm{PGL}({2},{\mathbb{R}})}
\newcommand{\SLTR}{\mathrm{SL}({2},{\mathbb{R}})}
\newcommand{\tr}{\mathrm{tr \,}}
\newcommand{\Tr}{\mathrm{Tr \,}}
\newcommand{\inv}{^{-1}}
\newcommand{\minus}{\hspace{-2pt}\setminus\hspace{-2pt}}
\newcommand{\Gammat}{\Gamma^{(2)}}
\newcommand{\T}{\mathrm{Teich}}
\newcommand{\Ht}{\mathrm{H}}
\newcommand{\vp}{v_{\mathfrak{p}}}
\begin{document}
\title{Closed geodesics on Semi-Arithmetic Riemann Surfaces}


\author{Gregory Cosac}\thanks{The first author was supported by CNPq-Brazil research grant 141204/2016-8}
\author{Cayo D\'oria} \thanks{D\'oria is grateful for the support of FAPESP grant 2018/15750-9.}
\address{
IMPA\\
Estrada Dona Castorina, 110\\
22460-320 Rio de Janeiro, Brazil}
\email{cosac@impa.br}
\address{
Departamento de Matem\'atica Aplicada, IME-USP\\ Rua do Mat\~ao, 1010, Cidade Universit\'aria\\
05508-090, S\~ao Paulo SP, Brazil.}
\email{cayofelizardo@ime.usp.br}

\subjclass[2010]{20H10, 11F06, 30F10, 53C22}
\keywords{Fuchsian group, modular embedding, semi-arithmetic group, systole, Teichm\"uller space}

\begin{abstract}
In this article, we study geometric aspects of semi-arithmetic Riemann surfaces by means of number theory and hyperbolic geometry. First, we show the existence of infinitely many semi-arithmetic Riemann surfaces of various shapes and prove that their systoles are dense in the positive real numbers. Furthermore, this leads to a construction, for each genus $g\geq2$, of infinite families of semi-arithmetic surfaces with pairwise distinct invariant trace fields, giving a negative answer to a conjecture of B. Jeon. Finally, for any semi-arithmetic surface we find a sequence of congruence coverings with logarithmic systolic growth and, for the special case of surfaces admitting modular embedding, we are able to exhibit explicit constants. 
\end{abstract}

\maketitle

\section{Introduction}

A \emph{closed Riemann surface} is a compact $1$-dimensional complex manifold without boundary. Due to Riemann's celebrated \emph{Uniformization Theorem}, any closed Riemann surface of genus $g\geq2$ may be identified with the quotient $\Gamma \backslash \Hy$ of the upper half-plane $\Hy=\{x+iy \in \mathbb{C} \mid y>0 \}$ by the action of a discrete group $\Gamma$ of automorphisms of $\Hy$. Since the Lie group $\PSLTR$, acting on $\Hy$ via M\"obius transformations, is isomorphic to the group of automorphisms of $\Hy$, $\Gamma$ can be realised as a \emph{Fuchsian group}, i.e, a discrete subgroup of $\PSLTR$. Furthermore, it is a fortuitous coincidence that $\PSLTR$ is also isomorphic to the group of orientation-preserving isometries of $(\Hy,d_{\Hy})$, where $d_{\Hy}$ is the distance function on $\Hy$ induced by the hyperbolic metric $ds^2=\frac{dx^2+dy^2}{y^2}$. Therefore, any such surface is naturally equipped with a hyperbolic structure and, for that reason, is called a \emph{hyperbolic surface}.

Let $\rT_g$ denote the \emph{Teichm\"uller space of genus $g$}, i.e., the space of all isomorphism classes of \emph{marked} closed Riemann surfaces of genus $g$ (we shall always assume $g \ge 2$). Among the most studied invariants used in order to understand the hyperbolic structure of a generic surface $S\in \rT_g$, one finds the diameter $\mathrm{diam}(S)$ of $S$, the spectral gap $\lambda_1(S)$ of the Laplace-Beltrami operator defined on $S$ and, more generally, the distribution of such spectrum (see \cite{Luo94}), the isometry group $\mathrm{Isom}(S)$ of $S$, and the systole $\sys(S)$, defined as the minimal length of a closed geodesic of $S$. 

The space $\rT_g$ turns out to be quite large. Indeed, $\rT_g$ may be equipped with a topology that makes it a topological manifold of dimension $6g-6$. In view of this fact, one may endeavour to study special classes of hyperbolic Riemann surfaces endowed with some extra structure that allows the application of a greater number of techniques. For us, the starting point is the class of \emph{arithmetic} Riemann surfaces. This class contains known surfaces such as the Hurwitz surface of genus $3$ and the Bolza surface of genus $2$. Moreover, in several aspects, arithmetic surfaces present very rare or strong properties which are not known to hold for a generic hyperbolic Riemann surface. To mention a few:

\begin{example}\label{040320.1}
	\hfill
	
	\begin{enumerate}[label=(\roman*)]
		\item Given an arithmetic Riemann surface $S$, it is possible to find a family $S_i$ of finite \emph{congruence} coverings of $S$ with arbitrarily large degree forming a family of \emph{geometric expanders} with explicit uniform spectral gap (see \cite{BB11}), namely $$\lambda_1(S_i) > \frac{5}{16}.$$
		\item The same family in the previous item has the explicit logarithmic systolic growth (see \cite{BS94}, \cite{KSV07})
		\[\sys(S_i) \gtrsim \frac{4}{3} \log(\area(S_i)),\]
		\item The commensurability class of an arithmetic Riemann surface is determined by its Laplace-Beltrami spectrum \cite{Reid92}.
		\item The cofinite Fuchsian group of minimal coarea is the triangular group $(2,3,7)$ which is arithmetic by the Takeuchi list in \cite{Takeuchi77}. 
	\end{enumerate}
\end{example}

On the other hand, arithmetic surfaces are somewhat rigid in the sense that, for any $g \ge 2$, the number of arithmetic surfaces in the \emph{moduli space} $\mathrm{M}_g$ is finite (see \cite{Bor81}). In \cite{SW}, Schmutz Schaller and Wolfart extended the notion of arithmeticity to a larger class of \emph{semi-arithmetic} Riemann surfaces. This new class contains all arithmetic surfaces as well as the important class of \emph{quasiplatonic} surfaces (see \cite{PW19}). Moreover, Schaller and Wolfart give infinite families of examples which are not arithmetic.

Since then,  
some relevant facts concerning the arithmetic and algebraic aspects of semi-arithmetic surfaces have been deduced (see \cite{Gen12}, \cite{K15}). The main purpose of our work is to give an account of the geometric aspects of semi-arithmetic Riemann surfaces by means of number theory and hyperbolic geometry.

Our first theorem reveals the discrepancy between the semi-arithmetic and the arithmetic classes. Before we continue, let us recall the definition of a length function. Given a closed topological surface $F$ of genus $g \ge 2$ and a homotopically nontrival closed curve $\alpha \subset F$, we define the corresponding \emph{length function} $\ell_\alpha:\rT_g \rightarrow \mathbb{R},$ that associates to each Riemann surfaces $S$ in $\rT_g$ the length $\ell_\alpha(S)$ of the unique closed geodesic on $S$ freely homotopic to $\alpha$.
While for any $g$ and any length function on $\rT_g$ the image of the set of arithmetic surfaces is discrete, we prove the following:

\begin{theorem}\label{dense:thm}
	For any $g \ge 2$ there exists a length function $\ell:\rT_g \to \mathbb{R}$ such that 
	\[\{ \ell(S) \mid S \in \rT_g \mbox{ is semi-arithmetic} \}\]
	is dense on the set of positive real numbers.
\end{theorem}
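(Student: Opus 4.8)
The plan is to build, for a fixed genus $g\ge 2$, a one–parameter family of semi-arithmetic surfaces whose systole (hence the value of a suitable length function) varies continuously and surjects onto a subinterval of $\R_{>0}$; rescaling by passing to covers or by choosing the family appropriately will then spread these values densely over all of $\R_{>0}$. The natural source of semi-arithmetic surfaces here is the class of triangle–group quotients, or more generally quotients of Fuchsian groups admitting a modular embedding: these are semi-arithmetic by construction (their trace field can be strictly larger than $\Q$, so they escape Borel's finiteness theorem), yet retain enough arithmetic structure to keep them inside the semi-arithmetic class as we deform. Concretely, I would fix a semi-arithmetic group $\Gamma_0$ of genus $g$ (or with a quotient/cover of genus $g$) and look at how $\sys$ behaves along a path $t\mapsto\Gamma_t$ inside $\rT_g$ that stays within the semi-arithmetic locus.

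The first step is to exhibit enough semi-arithmetic points in $\rT_g$. Schmutz Schaller–Wolfart already give infinite families of non-arithmetic semi-arithmetic surfaces; the key extra input I would need is that these can be arranged in every genus $g$ and that the semi-arithmetic condition is preserved under the natural algebraic operations I want (taking covers, amalgamating, or varying within a fixed commensurability class via congruence-type deformations). Since semi-arithmeticity is a condition on the trace field and on the quaternion algebra generated by $\Gamma^{(2)}$ being "number-theoretic" (the invariant trace field being totally real and $\tr$ of $\Gamma^{(2)}$ being algebraic integers in it), one checks that a suitably chosen continuous family of such groups — for instance, obtained by deforming a triangle group through a family of $(\ell,m,n)$ or through Teichmüller space while keeping the arithmetic constraints — stays semi-arithmetic. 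I expect the cleanest route is to use surfaces with modular embedding (the class singled out in the last sentence of the introduction), since there the arithmetic structure is most rigidly controlled and one has explicit bounds.

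The second step is the density/continuity argument. The length function $\ell_\alpha$ is continuous (indeed real-analytic) on $\rT_g$, so along any continuous path of semi-arithmetic surfaces its values fill an interval. To get density on all of $\R_{>0}$ rather than a single interval, I would use scaling: given a semi-arithmetic surface $S$ and a curve $\alpha$, passing to congruence covers multiplies available lengths, while choosing the base family to have systole approaching $0$ (e.g. pinching a curve in a controlled semi-arithmetic family) lets $\ell(S)$ be made arbitrarily small; combining a small-value regime with an interval of positive length and the covering trick yields a dense set. Alternatively one fixes a single $\alpha$ and notes that the set of achievable values $\{\ell_\alpha(S): S \text{ semi-arithmetic}\}$, being the image of an uncountable semi-arithmetic set under a non-constant analytic map, is uncountable and — once one shows it is not contained in any discrete set, which is exactly where the contrast with the arithmetic case is used — must be dense because it is closed under the multiplicative/congruence-cover operations.

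The main obstacle is the second step's bookkeeping: showing that one genuinely has a \emph{continuum}, or at least a set with an accumulation point, of semi-arithmetic surfaces on which a single length function $\ell$ is non-constant, rather than just countably many isolated semi-arithmetic points. Arithmetic surfaces are discrete in $\rT_g$, and a priori the semi-arithmetic ones could be discrete too for each individual $\ell$; the crux is to leverage the fact that the semi-arithmetic condition only constrains $\Gamma^{(2)}$ (a finite-index subgroup) and the trace field, leaving enough "room" in the full group to move $S$ while a chosen $\ell_\alpha$ changes value. I would handle this by an explicit construction — e.g. an HNN-type or amalgam deformation, or a family of quotients of a fixed lattice in a product of $\PSLTR$'s via a modular embedding — so that the semi-arithmeticity is manifest and the variation of $\ell$ is visible, and then invoke continuity of $\ell_\alpha$ and a scaling argument to conclude density.
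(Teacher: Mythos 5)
Your proposal hinges on producing a continuum, or at least an accumulating family, of semi-arithmetic surfaces along which a length function varies continuously. This cannot work: as the paper observes in Remark \ref{18220.2}, the Teichm\"uller space $\T(\Gamma)$ is parametrised by finitely many trace functions, and the semi-arithmetic condition forces these traces to be algebraic integers; hence there are only \emph{countably many} semi-arithmetic points in $\rT_g$. In particular there is no non-constant continuous path in $\rT_g$ consisting of semi-arithmetic surfaces, and your ``alternative'' route --- viewing the achievable lengths as the image of an uncountable semi-arithmetic set under an analytic map --- is based on a false premise, since that set is countable. You anticipate this obstacle in your last paragraph (``a priori the semi-arithmetic ones could be discrete too'') but the proposed remedies --- deforming triangle groups through varying $(\ell,m,n)$ (each such group is rigid, an isolated point), pinching within a ``controlled semi-arithmetic family'', exploiting extra freedom outside $\Gamma^{(2)}$ --- do not resolve it and cannot, because there simply is no continuous semi-arithmetic family to deform within.

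The paper's mechanism for density is number-theoretic, not topological. One fixes a one-parameter family of reflection groups coming from a trirectangle $\cQ_a$ with acute angle $\pi/3$ and free side length $a$, hence an associated right-angled hexagon $\cH_{2a}$ and a Fuchsian group of signature $(0;2,2,2,2,2,2)$. The surface is semi-arithmetic precisely when the traces are algebraic integers lying in a totally real field, and this is arranged by choosing $a$ so that $\sinh a$ is a unit in the ring of integers of a totally real field $K$ of degree at least $3$. Dirichlet's Unit Theorem (together with the elementary Lemma \ref{12919.3}) gives that $\cO_K^\times$ is dense in $\R$, so the admissible values of $a$ form a dense subset of $[0,\infty)$; the trace $2\cosh 2a$ of the distinguished element $c_1c_2$ then fills a dense subset of $[2,\infty)$. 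Passing to a genus-$2$ surface group via a surface-kernel epimorphism to $\Z/2\Z$ (and to higher genus via further covers) shows that the corresponding length function on $\rT_g$ has dense image on semi-arithmetic points. Thus the density is achieved by a countable but dense set of parameter values, not by continuity; this is the essential idea your proposal is missing.
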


The \emph{short geodesic conjecture} predicts the existence of a universal lower bound for the systole of any arithmetic Riemann surface. Theorem \ref{dense:thm} implies that this conjecture cannot be true for the more general class of semi-arithmetic closed surfaces. In fact, with some more effort, we can prove that:

\begin{theorem}\label{cor:densityofsys}
	The set $\{ \sys(S) \mid S \mbox{ is a closed semi-arithmetic Riemann surface} \}$ is dense in the positive real numbers.
\end{theorem}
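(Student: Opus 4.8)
The plan is to deduce the statement from Theorem~\ref{dense:thm} by a covering argument, using two elementary remarks: a finite cover of a semi-arithmetic surface is again semi-arithmetic (the invariant trace field is a commensurability invariant, and if $\Gamma'\le\Gamma$ has finite index then $\Gamma'^{(2)}\subseteq\Gamma^{(2)}$, so total reality and integrality of the relevant traces pass to $\Gamma'$), and the systole of a cover is at least that of the base. Fix a target $L>0$ and $\epsilon\in(0,L)$; the goal is a closed semi-arithmetic surface $S'$ with $\sys(S')\in(L-\epsilon,L+\epsilon)$. Applying Theorem~\ref{dense:thm} in $\rT_2$, I first obtain a closed semi-arithmetic surface $S$ with $\ell(S)$ — hence also $\sys(S)$ — as small as I like. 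Since the systole of a closed hyperbolic surface is realised by a \emph{simple} closed geodesic $\beta$, I may record that $S$ carries a simple closed geodesic $\beta$ with $\ell_\beta(S)=\sys(S)=:\delta$, where $\delta$ is as small as desired.

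The main step is then to pass to a finite cover $S'\to S$ in which $\beta$ \emph{unwinds} to length close to $L$ while every competing short geodesic is stretched past $L$. Let $\rho_1,\dots,\rho_m$ be the finitely many primitive closed geodesics of $S$ of length $<L+1$ other than $\beta$; lengths show that no $\rho_i$ is conjugate to a power of $\beta$, and that the family $\{\beta,\rho_1,\dots,\rho_m\}$ is independent, in the sense that no two of its members generate commensurable cyclic subgroups up to conjugacy. Surface groups are LERF, and in fact omnipotent, so for a suitable finite quotient $q:\pi_1(S)\to Q$ one gets $\mathrm{ord}_Q(\beta)\cdot\delta\in(L-\epsilon,L+\epsilon)$ together with $\mathrm{ord}_Q(\rho_i)\cdot\ell_{\rho_i}(S)>L+1$ for every $i$; producing such a $q$ requires $\delta$ to be small relative to the omnipotence constant of this family. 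Taking $S'$ to be the regular cover of $S$ with deck group $Q$, every lift of $\beta$ has length exactly $\mathrm{ord}_Q(\beta)\,\delta$, while every lift of a power of $\beta$, of a power of some $\rho_i$, or of a primitive geodesic of $S$ of length $\ge L+1$, has length $\ge\min(\mathrm{ord}_Q(\beta)\,\delta,\,L+1)=\mathrm{ord}_Q(\beta)\,\delta$. Hence $\sys(S')=\mathrm{ord}_Q(\beta)\,\delta\in(L-\epsilon,L+\epsilon)$, and $S'$ is semi-arithmetic, as required.

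The delicate point — where I expect the real work to lie — is doing both things at once inside a single cover: stretching all the other short geodesics while keeping the lift of $\beta$ of a prescribed, comparatively small length. The naive choice of a normal cover in which $\beta$ dies does not work, since elements such as $[\beta,w]$, with $w$ a short geodesic disjoint from $\beta$, lie in the normal closure of $\beta$ and stay short; this is precisely why omnipotence, rather than plain residual finiteness, is needed, and why one must keep the geometry of $S$ away from $\beta$ under control as $\delta\to0$, so that the number of the $\rho_i$, their lengths, and hence the omnipotence constant, remain bounded. I would obtain this control from the construction underlying Theorem~\ref{dense:thm}: there the short geodesic arises by pinching and $S$ degenerates to a fixed noded surface, so for $\delta$ small the relevant group-theoretic data stabilise and the omnipotence constant $N$ becomes independent of $\delta$; one then simply selects the semi-arithmetic $S$ with $\delta\le 2\epsilon/N$. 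For large $L$ one can alternatively avoid omnipotence altogether, by using that same construction to guarantee that $S$ has no closed geodesic of length in $(\delta,L+\epsilon)$ apart from powers of $\beta$, in which case a cyclic cover unwinding $\beta$ by a factor $\approx L/\delta$ already lands the systole in the target window.
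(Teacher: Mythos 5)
Your proposal takes a genuinely different route from the paper, and the route has a nontrivial gap that you flag yourself but do not close. The paper never unwinds the systolic geodesic. It uses Theorem~\ref{251019.1} to choose the \emph{target} trace $t$ directly (so $2\cosh^{-1}(t/2)$ can be made to approximate any $L>0$), and the fixed abstract element $y=c_1c_2\in\mathcal K\cong\pi_1(S_2)$ keeps its length $2\cosh^{-1}(t/2)$ unchanged in the finite cover. The finite cover is then produced by Lemmas~\ref{hom} and~\ref{action}: $\mathcal K$ is mapped to $\F_2$ by a Lipschitz retraction $\psi_{\nu(t)}$, $\F_2$ acts on a finite set where only powers of $y$ stabilise the chosen point among words of length $\le\mu(t)$, and the resulting finite-index subgroup $\Lambda_t=\psi^{-1}(H_t)$ contains $\langle y\rangle$ but no other element of bounded $\Sigma_t$-length. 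All $t$-dependence is absorbed into the Milnor--Schwarz constants $C(t),c(t)$ through the choices of $\nu(t)$ and $\mu(t)$, and the group-theoretic data (the element $y$, the retraction to $\F_2$, the constant $\varepsilon$) are fixed once and for all. This is why no compactness or limiting argument over the one-parameter family is needed.

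Your proposal replaces this with a pinch-and-unwind scheme driven by omnipotence: take $\delta=\sys(S)$ tiny, then find a finite quotient in which $\mathrm{ord}_Q(\beta)\,\delta\approx L$ while all other short primitives are stretched past $L$. The geometric bookkeeping (lifts of $\beta^k$, lifts of $\rho_i^k$, lifts of long primitives) is correct. The gap is exactly where you locate it: the omnipotence constant $N$ is attached to a specific family of independent conjugacy classes in $\pi_1(S_2)$, and as $\delta\to0$ that family changes -- not just its lengths, but which conjugacy classes are short. Your claim that ``the relevant group-theoretic data stabilise and $N$ becomes independent of $\delta$'' is not established by the fact that $S$ geometrically degenerates to a noded surface; the collar lemma bounds the number of the $\rho_i$ and keeps their lengths away from $0$, but it does not pin down their conjugacy classes in $\pi_1(S_2)$ nor give a uniform omnipotence constant for the resulting families. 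To make this work you would need to prove that, along the explicit one-parameter family of Theorem~\ref{251019.1} (say $t\to2^+$), the set of short conjugacy classes eventually stabilises, and then bound $N$ for that fixed finite family -- neither step is carried out. The fallback ``cyclic cover'' remark for large $L$ has the same issue: it presumes that $S$ has no primitive geodesic of length in $(\delta,L+\epsilon)$ other than $\beta$, which is a property of the thick part that you have not verified in the construction and which cannot hold for all $L$ uniformly.

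In short: your proof is a plausible alternative strategy, but as written it leans on omnipotence (a substantially heavier input than the quantitative LERF-type lemma the paper imports from \cite{D17}) and on an unproved uniformity statement; the paper's strategy sidesteps both by choosing the key trace value directly and pushing the \emph{other} short elements out of a finite-index subgroup while leaving $\langle y\rangle$ intact.
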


An important feature of arithmetic Riemann surfaces, however, remains valid for the semi-arithmetic class. Indeed, since it still makes sense to define congruence coverings for semi-arithmetic Riemann surfaces (see \cite{K15}), we prove that item (ii) in Example \ref{040320.1} extends to closed semi-arithmetic Riemann surfaces, with a possibly weaker constant. 

\begin{theorem}\label{sys:thm}
	Let $S$ be a closed semi-arithmetic Riemann surface. There exists a sequence of congruence coverings $S_i \to S$ of arbitrarily large degree such that
	$$\sys(S_i) \geq \lambda \log(\area(S_i))-c,$$
	where $\lambda>0$ and $c$ are constants depending only on $S$.
\end{theorem}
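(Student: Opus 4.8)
The plan is to reduce the systolic bound to a statement about how the trace of a primitive hyperbolic element (measuring geodesic length via $2\cosh(\ell/2)=|\tr|$) grows along a congruence tower, exactly as in the arithmetic case, but carried out over the \emph{invariant trace field} rather than $\mathbb{Q}$. Let $S=\Gamma\backslash\Hy$ be semi-arithmetic, so its invariant trace field $k=k\Gamma$ is a totally real number field and, by the standard description of semi-arithmetic groups, the group $\Gamma^{(2)}$ generated by squares of elements of $\Gamma$ sits inside the norm-one units of an order $\cO$ in a quaternion algebra $A$ over $k$ that is ramified at every archimedean place except one (the identity embedding $\sigma_1$). Fix a prime ideal $\mathfrak{p}$ of $\cO_k$ at which $A$ is unramified and $\cO$ is maximal, and for each $n\ge 1$ let $\Gamma(\mathfrak{p}^n)\le\Gamma$ be the principal congruence subgroup of level $\mathfrak{p}^n$; set $S_n=\Gamma(\mathfrak{p}^n)\backslash\Hy$. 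These are congruence coverings of $S$ of degree growing like $\mathrm{N}(\mathfrak{p})^{3n}$ up to bounded factors, so $\area(S_n)\asymp \mathrm{N}(\mathfrak{p})^{3n}$ and hence $n\asymp \log\area(S_n)$.

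The key step is the lower bound on $\sys(S_n)$. Let $\gamma\in\Gamma(\mathfrak{p}^n)$ be primitive hyperbolic, realising a shortest geodesic on $S_n$; replacing $\gamma$ by $\gamma^2$ if necessary (which at worst doubles the length bound and is harmless after adjusting $\lambda$), we may assume $\gamma\in\Gamma^{(2)}\cap\Gamma(\mathfrak{p}^n)\subset\cO^1$. Write $x=\tr\gamma\in\cO_k$. Since $\gamma\equiv \pm\Id \pmod{\mathfrak{p}^n}$ in $\cO$, we get $x\mp 2\equiv 0\pmod{\mathfrak{p}^n}$, so either $x=\pm 2$ — impossible, as $\gamma$ is hyperbolic, not parabolic or elliptic — or $x\mp 2$ is a nonzero element of $\cO_k$ divisible by $\mathfrak{p}^n$, whence $|\mathrm{N}_{k/\Q}(x\mp 2)|\ge \mathrm{N}(\mathfrak{p})^n$. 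Now I use totally realness: for every nonidentity embedding $\sigma_j:k\hookrightarrow\R$, the algebra $A\otimes_{\sigma_j}\R$ is the Hamilton quaternions, so $\sigma_j(x)=\tr$ of an element of a compact group and thus $|\sigma_j(x)|\le 2$, giving $|\sigma_j(x\mp 2)|\le 4$. Combining, $\prod_j |\sigma_j(x\mp 2)| \le 4^{\,[k:\Q]-1}$ over the nonidentity embeddings, so the archimedean place $\sigma_1$ must carry the size: $|x\mp 2| = |\sigma_1(x\mp 2)| \ge 4^{-([k:\Q]-1)}\,\mathrm{N}(\mathfrak{p})^n$. Since $\ell_{S_n}(\gamma)$ is (twice) the geodesic length attached to $\gamma$ and $2\cosh(\ell/2)=|\sigma_1(x)|$, this forces $e^{\ell/2}\gtrsim |\sigma_1(x)| \gtrsim \mathrm{N}(\mathfrak{p})^n$, i.e. $\ell \ge 2n\log\mathrm{N}(\mathfrak{p}) - C$ for a constant $C=C(k,\mathfrak{p})$. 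Feeding in $n\asymp\log\area(S_n)$ yields $\sys(S_n)\ge \lambda\log\area(S_n)-c$ with $\lambda,c$ depending only on $S$.

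A few points need care. First, one must ensure the $S_n$ are genuinely covers of $S$ (not just of $S^{(2)}=\Gamma^{(2)}\backslash\Hy$) and honestly of arbitrarily large degree; this is routine since $[\Gamma:\Gamma^{(2)}]<\infty$ and $[\Gamma^{(2)}:\Gamma^{(2)}\cap\Gamma(\mathfrak{p}^n)]\to\infty$, and one defines $\Gamma(\mathfrak{p}^n)$ intrinsically as the preimage under $\Gamma\to\Gamma^{(2)}\backslash$-something, or simply takes the congruence subgroups of $\Gamma$ in the sense of \cite{K15}. Second, the passage $\gamma\leadsto\gamma^2$ to land in $\Gamma^{(2)}$ changes the trace field data but not the asymptotics; alternatively, work directly with $\Gamma$ and note that traces of elements of $\Gamma$ lie in a fixed order of the (possibly larger) trace field $\Q(\tr\Gamma)$, which is still totally real with $A$ totally definite away from $\sigma_1$ by semi-arithmeticity, so the same Galois-norm argument applies verbatim. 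The main obstacle is precisely this bookkeeping at non-identity archimedean places — verifying that semi-arithmeticity really does give the bound $|\sigma_j(\tr\gamma)|\le 2$ for all $j\ne 1$ and all $\gamma$ in the relevant group — together with controlling $\area(S_n)$ from above \emph{and} below in terms of $\mathrm{N}(\mathfrak{p})^n$ so that the logarithm comes out with a clean positive constant $\lambda$; everything else is the classical Buser--Sarnak / Katz--Schaps--Vishne computation transplanted to the number field $k$.
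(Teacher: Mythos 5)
Your proposed proof breaks at precisely the step you flag as needing verification, and in fact that step is \emph{false} for general semi-arithmetic groups. You assert that the quaternion algebra $A$ of $\Gamma^{(2)}$ is ramified at every archimedean place of $k$ except the identity, so that $|\sigma_j(\tr\gamma)|\leq 2$ for all $j\neq 1$. That ramification condition is exactly Takeuchi's condition \ref{211019.2} characterising \emph{arithmetic} Fuchsian groups; the whole point of the semi-arithmetic class (Definition \ref{DDay19.2}, and the surrounding discussion in \S\ref{semiFuchs}) is that this boundedness hypothesis on the nonidentity Galois conjugates is dropped. For a strictly semi-arithmetic surface the algebra $A$ splits at several archimedean places, the nonidentity conjugates of traces are unbounded, and the inequality $\prod_{j\neq 1}|\sigma_j(x\mp 2)|\leq 4^{[k:\Q]-1}$ simply does not hold. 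Consequently the norm bound $|\mathrm{N}_{k/\Q}(x\mp 2)|\geq \mathrm{N}(\mathfrak{p})^{n}$ tells you nothing about $|\sigma_1(x)|$ alone: the product over \emph{all} places is controlled, but any individual factor can be small. This is exactly why the classical Buser--Sarnak / Katz--Schaps--Vishne computation does not ``transplant verbatim'' to the semi-arithmetic setting.

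The paper circumvents this with a genuinely different mechanism. Starting from $\gamma\in\Gamma^{(2)}(\mathfrak{p})$, one still obtains $4-\Tr^2(\gamma)\in\mathfrak{p}^2\setminus\{0\}$, but instead of converting this into a lower bound on the real trace, one converts it via the product formula into a lower bound on the \emph{height}: $\Ht(4-\Tr^2\gamma)\geq N(\mathfrak{p})^{2/d}$, hence $\Ht(\Tr^2\gamma)\geq\frac{1}{16}N(\mathfrak{p})^{2/d}$. Then, because $\Gammat$ is finitely generated and the height is submultiplicative up to a factor $4$ (inequality \eqref{prodH}), $\Ht(\eta)$ grows at most geometrically in the word length $w_\Sigma(\eta)$, which gives $w_\Sigma(\eta)\geq C\log N(\mathfrak{p})-c$. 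Finally Lemma \ref{milnorlemma} (Milnor--Schwarz) turns word length into geodesic displacement, and Lemma \ref{areagrowth} turns $\log N(\mathfrak{p})$ into $\log\area(S_\mathfrak{p})$. The price of this robustness is that the constant $\lambda$ is no longer $4/3$ but an inexplicit quantity depending on $S$; recovering an explicit constant is exactly the content of Theorem \ref{modemb:thm}, which requires the extra hypothesis of a modular embedding so that the nonidentity conjugates become controllable after mapping into an arithmetic $X_r$-manifold. Your argument as written proves Theorem \ref{sys:thm} only for arithmetic surfaces, where it is already known.
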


Moreover, for a suitable subclass of semi-arithmetic surfaces, the Riemann surfaces admitting a \emph{modular embedding}, we are able to obtain an explicit constant $\lambda$ depending only on the corresponding embedding. For any $r \geq 1$, consider the symmetric space $X_r=\Hy^r$, i.e., the product of $r$ copies of $\Hy$. We say that a Riemannian manifold is an $X_r$-manifold if it is locally isometric to $X_r$.

\begin{theorem}\label{modemb:thm}
	If $S$ is a semi-arithmetic Riemann surface which admits modular embedding into an arithmetic $X_r$-manifold for some $r \ge 1$, then there exists a sequence of congruence coverings $S_i \to S$ of arbitrarily large degree satisfying
	$$\sys(S_i) \geq \frac{4}{3r} \log(\area(S_i))-c,$$
	where the constant $c$ does not depend on the $i$.
\end{theorem}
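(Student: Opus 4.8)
The plan is to reduce Theorem \ref{modemb:thm} to the more general Theorem \ref{sys:thm} by extracting an explicit value of $\lambda$ from the particular structure provided by a modular embedding. Recall that a modular embedding of a semi-arithmetic surface $S = \Gamma \backslash \Hy$ consists of a holomorphic embedding $\Hy \hookrightarrow \Hy^r$ together with a compatible embedding of $\Gamma$ into an arithmetic lattice $\Lambda \leq \Isom(X_r)^+$ (arising from a quaternion algebra over a totally real field of degree $r$), equivariant with respect to a Galois-type action on the coordinates; the first coordinate recovers the identity embedding. The key quantitative input is that, for a hyperbolic element $\gamma \in \Gamma$ corresponding to a closed geodesic, the translation length $\ell(\gamma)$ on the first factor is controlled from below by the translation length of the image in $X_r$, which in turn is governed by the logarithm of the house (maximal archimedean absolute value) of the relevant algebraic integer, together with the usual product formula / Mahler-measure estimate over the number field.

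First I would set up congruence coverings: fix a prime ideal $\mathfrak{p}$ in the ring of integers of the invariant trace field (or rather in the order defining $\Lambda$), and let $\Gamma(\mathfrak{p}) = \Gamma \cap \Lambda(\mathfrak{p})$ be the preimage of the principal congruence subgroup; set $S_i = \Gamma(\mathfrak{p}_i) \backslash \Hy$ for a sequence of primes of increasing norm. Second, I would bound the systole of $S_i$ from below: any closed geodesic on $S_i$ corresponds to a hyperbolic $\gamma \in \Gamma(\mathfrak{p}_i)$ with $\gamma \ne \Id$, so $\gamma \equiv \Id \pmod{\mathfrak{p}_i}$ in $\Lambda$. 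This forces the nonzero entries of $\gamma - \Id$ (as an algebraic integer matrix over the degree-$r$ field) to be divisible by $\mathfrak{p}_i$, hence to have norm at least $N(\mathfrak{p}_i)$; combined with the fact that $\gamma$ acts nontrivially and with the arithmetic-geometric mean / product formula over the $r$ archimedean places, one gets that the largest archimedean absolute value of an entry — which controls $e^{\ell(\gamma)/2}$ on some factor — is at least $N(\mathfrak{p}_i)^{1/r}$ up to bounded multiplicative error. Third, I would transfer this back to the first factor: the modular embedding's equivariance means $\ell_{S_i}(\gamma)$ is the translation length on the \emph{first} $\Hy$-factor, and a standard comparison (the $j$-th factor's translation length is bounded by a constant times the first, or more precisely one uses that $\sum_j \ell_j(\gamma)$ is comparable to the full $X_r$-displacement while each $\ell_j \geq 0$) yields $\ell_{S_i}(\gamma) \geq \frac{4}{3r}\log N(\mathfrak{p}_i) - O(1)$ after inserting the constant $4/3$ that already appears in the arithmetic case of Example \ref{040320.1}(ii). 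Finally, $\log \area(S_i) = \log\big([\Gamma : \Gamma(\mathfrak{p}_i)] \area(S)\big)$, and $[\Gamma : \Gamma(\mathfrak{p}_i)]$ is polynomial in $N(\mathfrak{p}_i)$ (bounded by $|\PSL_2(\cO/\mathfrak{p}_i)| \asymp N(\mathfrak{p}_i)^3$), so $\log N(\mathfrak{p}_i) \geq \tfrac13 \log\area(S_i) - O(1)$ — wait, this would worsen the constant, so instead one must argue more carefully that the relevant bound is $\log N(\mathfrak{p}_i) \ge \log \area(S_i) - O(1)$ using that the geodesic length genuinely sees the full congruence level, matching the sharp constant $4/3$ from \cite{BS94, KSV07}; the bookkeeping to land exactly on $\frac{4}{3r}$ rather than something smaller is the delicate point.

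The main obstacle, as indicated, is the sharp constant: it is easy to get \emph{some} explicit $\lambda$ of the shape $c_0/r$, but pinning it to $4/3r$ requires faithfully importing the sharp estimate from the arithmetic case (which itself relies on the precise relation between the norm of the congruence ideal and the displacement length, not merely the covering degree) and checking that the modular embedding does not lose more than the factor $r$ coming from spreading the displacement across the $r$ archimedean factors. Concretely, I expect the heart of the argument to be an inequality of the form $\ell_{S}(\gamma) \cdot r \geq 2\log\!\big(\text{house of } \tr(\tilde\gamma)\big) \geq 2\log N(\mathfrak{p}_i)^{1-o(1)}$ for $\gamma$ nontrivial mod $\mathfrak{p}_i$, where $\tilde\gamma$ is the lift to $\Lambda$ — the first inequality because the translation length on the first factor dominates the average displacement only after multiplying by $r$ in the worst case, and the second from $\tr(\tilde\gamma) \equiv 2 \pmod{\mathfrak{p}_i}$ forcing the conjugate traces to be large by the product formula. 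Controlling the $o(1)$ and the additive $O(1)$ uniformly in $i$ (but allowed to depend on $S$ and the embedding) is what makes the constant $c$ in the statement depend on $S$ but $\lambda = 4/3r$ depend only on $r$.
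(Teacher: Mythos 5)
Your proposal has the right overall skeleton — congruence subgroups, product-formula estimates on traces in the quaternion algebra, then bookkeeping with the index growth $[\Gamma:\Gamma(\mathfrak{p})] \asymp N(\mathfrak{p})^3$ — but the crucial step where you ``transfer back to the first factor'' is not justified, and this is precisely where the paper's argument has its one genuine idea, which is missing from your write-up. You assert that ``the $j$-th factor's translation length is bounded by a constant times the first,'' and the parallel alternative you offer (``$\sum_j \ell_j(\gamma)$ is comparable to the full $X_r$-displacement while each $\ell_j \geq 0$'') does not establish this: nonnegativity of the other $\ell_j$ gives you \emph{upper} control of the first factor by the total, which is the wrong direction. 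A priori nothing prevents $\ell_1(\gamma)$ from being tiny while some $\ell_j(\gamma)$ with $j>1$ is enormous, and the algebraic data (traces lying in an order, product formula) alone cannot rule this out since the Galois conjugates of $\tr\gamma$ need not be comparably sized.

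What makes it work in the paper is the Schwarz--Pick lemma applied to the modular embedding $F:\Hy\to\Hy^r$. Because each component $F_j:\Hy\to\Hy$ is holomorphic and $F_j\circ\gamma = \Phi_j(\gamma)\circ F_j$, the $1$-Lipschitz property of $F_j$ (with respect to the hyperbolic metrics) immediately gives $\ell(\Phi_j(\gamma))\leq \ell(\gamma)$ for every $j$, and hence the $X_r$-displacement of $\Phi(\gamma)$ is $\leq \sqrt{r}\,\ell(\gamma)$. This is the inequality you wanted, with constant $1$ per factor, but it is a theorem about holomorphic equivariant maps — not a ``standard comparison'' that comes for free from equivariance alone. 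Combined with the estimate $\sys_1(\Lambda(\mathfrak{p})\backslash\Hy^r)\geq \frac{4}{\sqrt{r}}\log N(\mathfrak{p})-c_1$ from Proposition \ref{systvsnorm}, one gets $\sys(S_\mathfrak{p}) \geq \frac{4}{r}\log N(\mathfrak{p})-c_2$.

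Finally, your bookkeeping paragraph talks yourself out of the correct computation. The inequality $\log N(\mathfrak{p}) \geq \tfrac{1}{3}\log\area(S_\mathfrak{p}) - O(1)$, coming from $\area(S_\mathfrak{p}) \leq L\,N(\mathfrak{p})^3$ (Lemma \ref{areagrowth}), is exactly what you want to insert; it does not ``worsen the constant'' but is the source of the $3$ in $\frac{4}{3r}$. Your alternative $\log N(\mathfrak{p})\geq \log\area(S_\mathfrak{p})-O(1)$ is false (since $\log\area \sim 3\log N$) and, had it been true, would have produced the wrong constant $\frac{4}{r}$.
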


Note that when $r=1$ we recover the known result for arithmetic surfaces mentioned in Example \ref{040320.1}.

Theorem \ref{modemb:thm} will follow from geometric results combined with estimates for the systolic growth of congruence coverings of irreducible arithmetic $X_r$-manifolds, for any $r \geq 1$. In \cite{Murillo17}, Murillo treats the noncompact case of such arithmetic manifolds, but his argument may be generalised almost verbatim for the compact case. For the sake of completeness, in Appendix \ref{pliniogeral} we provide a proof of:

\begin{theorem} \label{plinio:appendix}
	Let $M$ be an arithmetic closed $X_r$-manifold, there exists a sequence $M_i$ of congruence coverings of $M$ with $\vol(M_i) \to \infty$ such that $$\sys_1(M_i) \ge \frac{4}{3\sqrt{r}}\log(\vol(M_i)) -c,$$
	where $c$ does not depend on $i$.
\end{theorem}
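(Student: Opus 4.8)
\textbf{Proof proposal for Theorem \ref{plinio:appendix}.}

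The plan is to follow Murillo's strategy for the noncompact case and check that the only place where cusps are used — counting lattice points via Eisenstein-type estimates — can be replaced by a purely group-theoretic / geometric counting argument valid in the cocompact setting. First I would fix an arithmetic $X_r$-manifold $M = \Gamma \backslash X_r$, where $\Gamma$ arises from a quaternion algebra $A$ over a totally real number field $k$ of degree $n$ over $\Q$, split at exactly $r$ real places (so $X_r = \Hy^r$) and ramified at the remaining $n-r$ real places, with $\Gamma$ commensurable with the image of the units $\rO^1$ of a maximal order $\rO \subset A$. For a prime ideal $\mathfrak{p}$ of $\cO_k$ at which $A$ is unramified, let $\Gamma(\mathfrak{p}) \le \Gamma$ be the principal congruence subgroup of level $\mathfrak{p}$ and $M_\mathfrak{p} = \Gamma(\mathfrak{p}) \backslash X_r$; these are the congruence coverings, and standard index formulas give $\vol(M_\mathfrak{p}) = [\Gamma : \Gamma(\mathfrak{p})] \cdot \vol(M) \asymp |\cO_k/\mathfrak{p}|^{3} = (\N\mathfrak{p})^3$ up to constants depending only on $M$ (the exponent is $\dim \PSL_2 = 3$; more precisely one gets $(\N\mathfrak p)^3(1-(\N\mathfrak p)^{-2})$).

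The geometric core is a lower bound for the systole of $M_\mathfrak{p}$. A nontrivial closed geodesic of length $L$ in $M_\mathfrak{p}$ corresponds to a hyperbolic (or mixed) element $\gamma \in \Gamma(\mathfrak{p})$, $\gamma \ne 1$, whose translation length on $X_r$ equals $L$; since the metric on $X_r = \Hy^r$ is the $\ell^2$-type product metric, the translation length is $\big(\sum_{j=1}^r \ell(\gamma_j)^2\big)^{1/2}$ where $\ell(\gamma_j)$ is the translation length of the $j$-th factor, and $2\cosh(\ell(\gamma_j)/2) = |\tr \sigma_j(\gamma)|$ for the $j$-th split real embedding $\sigma_j$. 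The key point is that $\gamma \in \Gamma(\mathfrak{p})$, $\gamma \ne 1$ forces the algebraic integer $\tr(\gamma) - 2 \in \cO_k$ (suitably interpreted via a lift to $\rO^1$) to be a nonzero element of the ideal $\mathfrak{p}$, hence $|\N_{k/\Q}(\tr(\gamma)-2)| \ge \N\mathfrak{p}$. Combining this lower bound on the norm with the AM–GM inequality applied to the $n$ archimedean absolute values of $\tr(\gamma)-2$, and using that at the $n-r$ ramified real places and at the complex places the corresponding traces are bounded in terms of $M$ alone (the element lies in a fixed order, and a Fuchsian/Kleinian element coming from a non-split factor has bounded trace there), one extracts $\prod_{j=1}^r |\sigma_j(\tr\gamma - 2)| \ge c' \N\mathfrak{p}$ for a constant $c'=c'(M)$. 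Feeding $|\sigma_j(\tr\gamma)| = 2\cosh(\ell(\gamma_j)/2) \le e^{\ell(\gamma_j)/2} + \text{const}$ into this product and taking logarithms yields $\sum_{j=1}^r \ell(\gamma_j) \ge 2\log\N\mathfrak{p} - c''$, and then Cauchy–Schwarz gives $L = \big(\sum \ell(\gamma_j)^2\big)^{1/2} \ge \frac{1}{\sqrt r}\sum\ell(\gamma_j) \ge \frac{2}{\sqrt r}\log\N\mathfrak{p} - c'''$. Finally substituting $\log\N\mathfrak{p} = \frac13\log(\N\mathfrak p)^3 \ge \frac13(\log\vol(M_\mathfrak{p}) - c)$ produces $\sys_1(M_\mathfrak{p}) \ge \frac{4}{3\sqrt r}\log\vol(M_\mathfrak{p}) - c$ with $c$ independent of $\mathfrak{p}$, and letting $\N\mathfrak{p}\to\infty$ along primes of good reduction gives the desired sequence $M_i$.

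I expect the main obstacle to be the bookkeeping at the non-split archimedean places: one must show carefully that the contribution of the ramified real places and (if $k$ is not totally real in some variant) complex places to $|\N_{k/\Q}(\tr\gamma-2)|$ is bounded above by a constant depending only on $M$, so that essentially the entire norm is carried by the $r$ split factors — this is where Murillo's cusp-based argument is replaced, and it requires knowing that $\gamma$ lies in a fixed lattice (equivalently a fixed order) so that its conjugates at those places have bounded size; compactness of $M$ actually makes this cleaner than the cusped case since there is no escape to infinity to control. A secondary technical point is making the identification ``$\gamma \in \Gamma(\mathfrak p)$ nontrivial $\Rightarrow \tr\gamma \equiv 2 \pmod{\mathfrak p}$ and $\tr\gamma \ne 2$'' precise: triviality of $\tr\gamma - 2$ would force $\gamma$ to be unipotent or central, and cocompactness rules out nontrivial unipotents, so $\tr\gamma\neq 2$, while the congruence is immediate from $\gamma \equiv \mathrm{Id} \pmod{\mathfrak p}$ in a matrix realization over the completion. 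The rest is the routine volume/index computation and the extraction of the explicit constant $\tfrac{4}{3\sqrt r}$, which I would not grind through in detail here.
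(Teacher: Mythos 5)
Your proposal follows the same overall template as the paper's proof (Proposition \ref{systvsnorm}/Theorem \ref{sysvsvol}): realise $\Gamma$ inside the unit group of an order $\cQ$ in a quaternion algebra over a totally real field $K$ split at exactly $r$ real places, bound the trace at the ramified places by $2$, convert a lower bound on $|\mathrm{N}_{K/\Q}(\tr\gamma-2)|$ into a lower bound on the translation length via the $\Hy^r$ displacement formula and Cauchy--Schwarz, and finish with the index bound $[\Gamma:\Gamma(\mathfrak p)]\lesssim \mathrm{N}(\mathfrak p)^3$. But there is a concrete gap that makes your claimed constant unobtainable by the steps you give.

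You argue that $\gamma\in\Gamma(\mathfrak p)\setminus\{1\}$ forces $\tr\gamma-2$ to be a nonzero element of the ideal $\mathfrak p$, and from this you obtain $\sum_j \ell(\gamma_j)\ge 2\log\mathrm{N}(\mathfrak p)-c''$ and hence $L\ge \tfrac{2}{\sqrt r}\log\mathrm{N}(\mathfrak p)-c'''$. Substituting $\log\mathrm{N}(\mathfrak p)\approx \tfrac13\log\vol(M_\mathfrak p)$ then produces $\tfrac{2}{3\sqrt r}\log\vol(M_\mathfrak p)$, not the $\tfrac{4}{3\sqrt r}$ you write: that last line is a factor-of-two slip in the arithmetic. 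The missing idea, which is exactly what the paper uses and which goes back to Buser--Sarnak, is that $\tr\gamma-2$ actually lies in $\mathfrak p^2$, not merely in $\mathfrak p$. This comes from the reduced-norm constraint: writing $\gamma$ as $\alpha_0+\alpha_1\iota+\alpha_2 j+\alpha_3\kappa\in\cQ^1(\mathfrak p)$ with $\alpha_0-1,\alpha_1,\alpha_2,\alpha_3\in\mathfrak p$ and $\mathrm n(\gamma)=\alpha_0^2-a\alpha_1^2-b\alpha_2^2+ab\alpha_3^2=1$, one has
\begin{equation*}
\tr\gamma-2 \;=\; (\alpha_0^2-1)-(\alpha_0-1)^2 \;=\; a\alpha_1^2+b\alpha_2^2-ab\alpha_3^2-(\alpha_0-1)^2\;\in\;\mathfrak p^2,
\end{equation*}
so $|\mathrm{N}_{K/\Q}(\tr\gamma-2)|\ge \mathrm{N}(\mathfrak p)^2$. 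Propagating this extra power through your estimates doubles the coefficient, giving $L\ge \tfrac{4}{\sqrt r}\log\mathrm N(\mathfrak p)-c$ and hence the stated $\tfrac{4}{3\sqrt r}\log\vol(M_\mathfrak p)-c$. Without it, your argument proves only the weaker bound with constant $\tfrac{2}{3\sqrt r}$. (A related minor point: you should also use the $\mathfrak p^2$ bound, or some version of it, to rule out torsion elements in $\Gamma(\mathfrak p)$ for $\mathrm N(\mathfrak p)$ large; your remark about unipotents only excludes $\tr\gamma=\pm2$, not elliptic elements of higher order, for which one again compares $|\mathrm N(\tr\gamma-2)|$, which is bounded by a constant depending only on $d$, against $\mathrm N(\mathfrak p)^2$.)
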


We conclude the introduction with a brief outline of the article. In Section \ref{back} we review the necessary basic mathematical tools that will be used later on. In \S\ref{numbertheory}, a minimal list of definitions in algebraic number theory is given. We recall some general facts about Fuchsian groups and their Teichm\"uller spaces, and define semi-arithmetic Riemann surfaces in \S\ref{semiFuchs}. 
In \S\ref{congruencesub}, the notion of congruence subgroups in the context of semi-arithmetic Fuchsian groups is introduced. 

Theorem \ref{dense:thm} is proved in Section \ref{densetoflengths}. The idea for the proof is to embed surface-groups in groups generated by reflections across the sides of some specific hyperbolic polygons, namely, \emph{trirectangles} and \emph{right-angled hexagons}. These embeddings are constructed in such a way that their images contain a special element whose trace is a free parameter. By varying this parameter, we are able to obtain the density asserted in the theorem.

In \S \ref{19420.4} we give two applications of Theorem \ref{dense:thm}. First, using the density of lengths for semi-arithmetic surfaces of genus $2$, we conduct an argument similar to the one used in \cite{D17} and show that any such length is the systole of a finite covering of the corresponding surface, thus proving Theorem \ref{cor:densityofsys}. Next, we use the methods applied in the proof of Theorem \ref{dense:thm} in order to construct (see Theorem \ref{20420.1}), for each genus $g\geq2$, infinite families of semi-arithmetic Riemann surfaces with pairwise distinct invariant trace field. In \cite[Conjectrue 2]{Jeon19}, B. Jeon conjectured that, for a fixed genus $g\geq2$, only finitely many number fields and quaternion algebras could arise as the invariant trace field and quaternion algebra of a genus $g$ hyperbolic Riemann surfaces \emph{with integral traces}. So, in particular, we provide a negative answer to Jeon's conjecture.

In Section \ref{18220.1}, we prove Theorems \ref{sys:thm} and \ref{modemb:thm} . Theorem \ref{plinio:appendix} will follow from a more general theorem whose proof is provided in Appendix \ref{pliniogeral}. The proofs of these theorems are, in some sense, reminiscent of an idea of Margulis used in \cite{M82}. We estimate the displacement of any closed geodesic in the congruence coverings by means of a convenient arithmetic function in each context. On the other hand, the asymptotic behavior of the area/volume growth of the congruence coverings follows from recent results in \cite{K15}.

\vskip 10pt
\noindent{\textbf{Acknowledgements.}}
We would like to thank Professor Mikhail Belolipetsky and Plinio Murillo for their comments and
suggestions on the original manuscript.

\section{Background}\label{back}

\subsection{Number theoretic tools}\label{numbertheory}

In this subsection we review some classical definitions and results which will be assumed along the text. This material can be found in any textbook of algebraic number theory or arithmetic geometry (e.g. \cite{Lang} and \cite{gtmReid03}).

A complex number $z$ is said to be an \emph{algebraic number} if there exists a non-zero polynomial $P \in \mathbb{Z}[X]$ such that $P(z)=0.$ When $P$ can be taken to be monic, i.e. with leading coefficient $1$, then we say that $z$ is an \emph{algebraic integer}. Let $X \subset \C$ be a nonempty set, we define $\Q(X)$ as the intersection of all subfields of $\C$ containing $X$. Note that $\Q(X)$ is a field. 

A \emph{number field} is a subfield $L \subset \C$ which is a finite extension of $\Q$, i.e. $L$ is a $\Q$-vector space of finite dimension. When $\alpha$ is an algebraic number, $\Q(\alpha)$ is a number field. Indeed, let $P$ be \emph{minimal polynomial} of $\alpha$, which, by definition, means that $P$ is the unique monic polynomial over $\Q$ of minimal degree such that $P(\alpha)=0$. If $n$ is equal to the degree of $P$, it is easy to check that $\Q(\alpha)$ is a finite extension of $\Q$ and $\{1,\alpha,\dots,\alpha^{n-1}\}$ is a $\Q$-basis of $\Q(\alpha)$. Conversely, if $L$ is a number field, then there exists $\beta \in \C$ such that $L=\Q(\beta)$. Although $\beta$ is not unique, any field homomorphism  $\sigma:L \rightarrow \C$ is determined by $\sigma(\beta)$, which is a root of the minimal polynomial of $\beta$. Hence, for a finite extension $L$ there exist exactly $n$ monomorphisms $\sigma:L \to \C$, where $n=\mathrm{dim}_\Q(L)=:[L:\Q]$ is called the \emph{degree} of $L$. These monomorphisms are called the \emph{Galois embeddings} of $L$. We say that $\beta \in \C$ is Galois conjugated with an algebraic number $\alpha$ if there is a Galois embedding $\sigma:\Q(\alpha) \to \C$ such that $\beta=\sigma(\alpha)$. In particular, $\beta$ is algebraic.

An algebraic number $\alpha$ is said to be \emph{totally real} (resp. \emph{positive}) if $\alpha$ and all its Galois conjugates are real (resp. positive). Note that if $\alpha$ is totally real then $\alpha^2$ is totally positive. We record the following easy lemma for future use:

\begin{lemma}\label{10320.1}
	Let $\alpha$ be a totally positive algebraic integer. Then $\sqrt{\alpha}$ is a totally real algebraic integer.
\end{lemma}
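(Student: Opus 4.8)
The plan is to exhibit explicitly a monic polynomial in $\Z[X]$ having $\sqrt{\alpha}$ as a root and all of whose complex roots are real. Let $n=[\Q(\alpha):\Q]$ and let $\sigma_1,\dots,\sigma_n\colon \Q(\alpha)\to\C$ be the Galois embeddings, with $\sigma_1=\mathrm{id}$. First I would form
\[
f(X)\defeq\prod_{i=1}^{n}\bigl(X^2-\sigma_i(\alpha)\bigr).
\]
Expanding, the coefficients of $f$ are, up to sign, the elementary symmetric polynomials evaluated at $\sigma_1(\alpha),\dots,\sigma_n(\alpha)$. Any element of the absolute Galois group of $\Q$ permutes the set $\{\sigma_1(\alpha),\dots,\sigma_n(\alpha)\}$ — these being precisely the roots of the minimal polynomial of $\alpha$ — so it fixes each such symmetric expression; hence the coefficients of $f$ lie in $\Q$. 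Being moreover polynomial expressions with integer coefficients in the algebraic integers $\sigma_i(\alpha)$, they are algebraic integers, and a rational algebraic integer is an ordinary integer; thus $f\in\Z[X]$, and it is monic by construction.

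Since $\alpha$ is totally positive it is in particular a positive real number, so $\sqrt{\alpha}\in\R$ makes sense, and $f(\sqrt{\alpha})=\prod_i(\alpha-\sigma_i(\alpha))=0$ because the factor with $i=1$ vanishes. Therefore $\sqrt{\alpha}$ is an algebraic integer. Its minimal polynomial over $\Q$ divides $f$, so every Galois conjugate of $\sqrt{\alpha}$ is a root of $f$, i.e.\ of the form $\pm\sqrt{\sigma_i(\alpha)}$ for some $i$; as each $\sigma_i(\alpha)$ is a positive real, all of these are real. Hence $\sqrt{\alpha}$ and all its conjugates are real, which is exactly the assertion that $\sqrt{\alpha}$ is totally real.

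There is essentially no obstacle here; the only step deserving a word of care is the passage from ``$f$ has rational coefficients'' to ``$f$ has integer coefficients'', which rests on the standard fact that the algebraic integers form a ring (so each symmetric function of the $\sigma_i(\alpha)$ is an algebraic integer) together with $\overline{\Z}\cap\Q=\Z$. The borderline case $\sqrt{\alpha}\in\Q$ is not special: then $\sqrt{\alpha}$ is a rational root of the monic polynomial $f\in\Z[X]$, hence lies in $\Z$, and total reality is automatic.
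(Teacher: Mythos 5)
Your proof is correct and takes essentially the same approach as the paper's: the polynomial $f(X)=\prod_i(X^2-\sigma_i(\alpha))$ you construct is precisely $P(X^2)$, where $P$ is the minimal polynomial of $\alpha$, and the conclusion that every conjugate of $\sqrt{\alpha}$ is some $\pm\sqrt{\sigma_i(\alpha)}\in\R$ is the same. The only cosmetic difference is that the paper imports $P\in\Z[X]$ directly (since $\alpha$ is an algebraic integer) instead of rederiving the integrality of the coefficients of $f$ via Galois invariance and the ring of algebraic integers.
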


\begin{proof}
	Let $P(X) \in \Q[X]$ be the minimal polynomial of $\alpha$. Being $\alpha$ an algebraic integer, it is known that $P$ is actually a monic polynomial with coefficients in $\Z$. Now, $\sqrt{\alpha}$ is a root of $P(X^2)$ which makes it also an algebraic integer. Moreover, we may factor $P$ as $P(X) = \prod_i(X - \alpha_i)$ where $\alpha_i$ denotes the Galois conjugates of $\alpha$. It follows that $P(X^2) = \prod_i(X^2 - \alpha_i) = \prod_i(X - \sqrt{\alpha_i})(X + \sqrt{\alpha_i})$. Since all $\alpha_i$ are positive real numbers, $\pm\sqrt{\alpha_i}$ are real numbers, among which are all the Galois conjugates of $\sqrt{\alpha}$.
\end{proof}

Let $L$ be a number field, the subset of algebraic integers of $L$ form a ring $\mathcal{O}_L$ known as the \emph{ring of integers} of $L$. Let $\sigma:L \to \C$ be a Galois embedding. We say that $\sigma$ is real if $\sigma(L) \subset \R$, otherwise we say that $\sigma$ is complex. Note that if $\sigma$ is complex, then the complex conjugated $\bar{\sigma}$ is another Galois embedding different from $\sigma$. Let $r_1$ be the number of real Galois embeddings of $L$ and $r_2$ the number of pairs of complex conjugate embeddings, so $[L:\Q] = r_1 + 2r_2$. The following is a well-known theorem in algebraic number theory.

\begin{theorem}[Dirichlet's Unit Theorem, \cite{Neukirch99}]\label{dirichlet}
	The group of units $\mathcal{O}_L^\times$ of $\mathcal{O}_L$ is a finitely generated group of rank $r_1 + r_2 -1$. The torsion part is the cyclic subgroup formed by the roots of unity contained in $L$.
\end{theorem}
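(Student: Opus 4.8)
The plan is to establish the theorem by the classical argument via Minkowski's geometry of numbers. Write $\sigma_1,\dots,\sigma_{r_1}$ for the real Galois embeddings of $L$ and $\sigma_{r_1+1},\bar\sigma_{r_1+1},\dots,\sigma_{r_1+r_2},\bar\sigma_{r_1+r_2}$ for the complex ones, grouped into conjugate pairs, so that $n=[L:\Q]=r_1+2r_2$. Introduce the \emph{logarithmic map}
\[
\Lambda:\mathcal{O}_L^\times\longrightarrow\R^{r_1+r_2},\quad
\Lambda(u)=\bigl(\log|\sigma_1(u)|,\dots,\log|\sigma_{r_1}(u)|,\,2\log|\sigma_{r_1+1}(u)|,\dots,2\log|\sigma_{r_1+r_2}(u)|\bigr),
\]
which is a homomorphism of abelian groups. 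First I would compute $\ker\Lambda$: if $\Lambda(u)=0$ then all archimedean absolute values of $u$ equal $1$; since the powers $u,u^2,u^3,\dots$ are then algebraic integers of bounded degree all of whose conjugates lie in a fixed bounded set, they form a finite set, so $u$ is a root of unity. Conversely, every root of unity of $L$ is a unit killed by $\Lambda$. Hence $\ker\Lambda=\mu_L$, the group of roots of unity of $L$; being finite it is cyclic, and this will be the torsion subgroup in the statement.

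Next I would locate the image. Taking absolute values in $\prod_{i=1}^{r_1}|\sigma_i(u)|\cdot\prod_{j=1}^{r_2}|\sigma_{r_1+j}(u)|^2=|\mathrm{N}_{L/\Q}(u)|=1$ and passing to logarithms shows that $\Lambda(\mathcal{O}_L^\times)$ lies in the hyperplane $H=\{x\in\R^{r_1+r_2}:\sum_k x_k=0\}$, which has dimension $r_1+r_2-1$. Moreover $\Lambda(\mathcal{O}_L^\times)$ is discrete: the $\Lambda$-preimage of a bounded box consists of algebraic integers whose conjugates are all bounded, and there are only finitely many such (bounded conjugates force bounded coefficients in the minimal polynomial). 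A discrete subgroup of the $(r_1+r_2-1)$-dimensional space $H$ is free abelian of rank $s\le r_1+r_2-1$, so already $\mathcal{O}_L^\times\cong\mu_L\times\Z^{s}$ with $s\le r_1+r_2-1$; it remains only to prove $s=r_1+r_2-1$.

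This last point is the crux and the main obstacle. The goal is to produce, for each $k\in\{1,\dots,r_1+r_2\}$, a unit $u_k$ with $|\sigma_j(u_k)|<1$ for every $j\ne k$. One obtains this from Minkowski's convex body theorem: for positive reals $c_1,\dots,c_{r_1+r_2}$ whose suitably weighted product exceeds a constant $M_L$ depending only on $L$, there is a nonzero $x\in\mathcal{O}_L$ with $|\sigma_i(x)|\le c_i$ at every archimedean place, and then automatically $1\le|\mathrm{N}_{L/\Q}(x)|\le M_L$. Taking the $c_i$ with $i\ne k$ as small as the product constraint allows, and using that only finitely many ideals of $\mathcal{O}_L$ have norm $\le M_L$, one finds infinitely many such $x$ generating a single principal ideal; the quotient of two of them is a unit $u_k$ with $|\sigma_j(u_k)|<1$ for all $j\ne k$, whence $|\sigma_k(u_k)|>1$ by the norm relation. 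The matrix whose rows are $\Lambda(u_1),\dots,\Lambda(u_{r_1+r_2})$ then has strictly negative off-diagonal entries and vanishing row sums; deleting its last row and column leaves a strictly diagonally dominant matrix, hence invertible, so the vectors $\Lambda(u_k)$ span $H$. Therefore $s=r_1+r_2-1$ and $\Lambda(\mathcal{O}_L^\times)$ is a full lattice in $H$, which completes the proof. All of these steps are standard and are carried out in detail in \cite{Neukirch99}; the above is only the outline.
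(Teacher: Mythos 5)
The paper does not prove Dirichlet's Unit Theorem; it simply states it as a classical result with a citation to Neukirch \cite{Neukirch99}, so there is no internal proof to compare against. Your outline is the standard Minkowski-theoretic argument, and it is correct: the kernel computation (bounded conjugates of bounded degree give finiteness, hence torsion), the norm relation placing the image in the trace-zero hyperplane, the discreteness of the image, and the construction via Minkowski's lattice point theorem of a unit small at all but one archimedean place, followed by the diagonal-dominance argument to show the resulting logarithmic vectors span the hyperplane, is precisely the proof carried out in the cited reference.
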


A \emph{valuation} $v$ on $L$ is a multiplicative sub-additive nonnegative function $v: L \to \R$ such that $v(x)=0$ if and only if $x=0$. Two valuations $v_1,v_2$ on $L$ are said to be equivalent if $v_1 = v_2^a$ for some positive number $a\in\R$. If a valuation $v$ satisfies a condition stronger than sub-additivity, namely, that
\begin{align}\label{19420.1}
	v(x+y) \leq \max \{v(x), v(y)\} \quad \text{for all } x,y\in L,
\end{align} 
we say $v$ is \emph{non-Archimedean}. On the other hand, it is \emph{Archimedean} if it is not equivalent to a valuation satisfying \eqref{19420.1}. Next, we indicate two ways of describing a valuation on the number field $L$:

\begin{enumerate}[label=(\roman*)]
	\item Let $\sigma: L \to \C$ be a Galois embedding and define $v_{\sigma}(x) = |\sigma(x)|, \ \forall x\in L$, where $|\cdot|$ is the usual norm in $\C$. Then $v_{\sigma}$ is Archimedean and two such valuations $v_{\sigma}, v_{\sigma'}$ are equivalent if and only if $\sigma'$ is the complex conjugate of $\sigma$. \label{19420.2}
	\item Let $\mathfrak{p}$ be any prime ideal in $\cO_{L}$ and let $N(\mathfrak{p})$ denote its \emph{norm}, i.e, the cardinality of the finite field $\cO_{L}/\mathfrak{p}$. For any nonzero $x\in\cO_{L}$, define $\vp(x) = N(\mathfrak{p})^{-\mathrm{ord}_{\mathfrak{p}}(x)}$, where $\mathrm{ord}_{\mathfrak{p}}(x)$ is the largest integer $k$ such that $\mathfrak{p}^k$ divides $x\cO_K$. It is natural to extend these functions to 0 as $\mathrm{ord}_{\mathfrak{p}}(0) = +\infty$ and $\vp(0) = 0$. Finally, since $L$ is the field of fractions of $\cO_{L}$, we may extend $\vp$ to $L$ by imposing the multiplicative property and defining $\vp(x/y)\defeq\vp(x)/\vp(y)$. It is easy to check that $\vp$ defines a non-Archimedean valuation on $L$. \label{19420.3}	
\end{enumerate}

It is an important result that these are all valuations on $L$, up to equivalence. In other words, any Archimedean valuation on $L$ is equivalent to $v_{\sigma}$ for some Galois embedding $\sigma: L \to \C$, and any non-Archimedean valuation on $L$ is equivalent to $\vp$ for some prime ideal $\mathfrak{p}\in \cO_{L}$. 

We define a \emph{place} of $L$ to be an equivalence class of valuations on $L$ and denote by $V(L)$ the set of all places of $L$. The set $V(L)$ decomposes as $V_{\infty}\cup V_{f}$, where $V_{\infty}$ denotes the set of Archimedean (or \emph{infinite}) places of $L$ and $V_{f}$, the set of non-Archimedean (or \emph{finite}) places of $L$. Note that $V_\infty$ has cardinality $r_1+r_2$, whereas $V_{f}$ is in bijection with the set of prime ideals of $\cO_{L}$.

We adopt the following convention: for an infinite place associated to a real Galois embedding $\sigma$, we consider $v_\sigma$ as described in \ref{19420.2} to be the normalised valuation in its equivalence class. In case $\sigma$ is a complex embedding, choose $v_\sigma(x) = |\sigma(x)|^2$ instead. Finally, for a finite place associated to $\mathfrak{p}$, we consider $\vp$, as defined in \ref{19420.3}, to be the normalised valuation. Henceforth, we will usually denote an arbitrary place of $L$ by $v$, as well as the normalised valuation in its equivalence class following the aforementioned criteria.

For any $\alpha \in L$, $\alpha \neq 0$, there exists at most finitely many places $v\in V(L)$ such that $v(\alpha)>1$. Thus, we may define
\[\Ht(\alpha)= \left( \prod_{v \in V(L)} \max\{1,v(\alpha)\} \right)^{\frac{1}{n}}.\]

We say that $\Ht(\alpha)$ is the absolute \emph{height} of $\alpha$. The name absolute comes from the fact that $\Ht$ does not depend on the field containing $\alpha$ (cf. \cite[Chapter 3]{Lang}). Note that, if $\alpha, \beta \in L$, we have that $v(\alpha + \beta)\leq 4\max{v(\alpha),v(\beta)}$ for $v$ an Archimedean place and $v(\alpha+\beta)\leq \max{v(\alpha),v(\beta)}$ for $v$ a non-Archimedean place, whence the function $\Ht$ satisfies the following inequalities:

\begin{align}
 \Ht(\alpha+\beta) & \leq 4\Ht(\alpha)\Ht(\beta) \label{sumh};\\ 
 \Ht(\alpha \beta) & \leq \Ht(\alpha)\Ht(\beta) \label{prodh}.
\end{align}
%

Now let $A=\begin{pmatrix} a & b \\ c & d
 \end{pmatrix}$ be a matrix with $a,b,c,d \in L$ and $ad-bc=1$. We define the height of $A$ by
 \[\Ht(A)= \left( \prod_{v \in V(L)} \max\{1,v(a), v(b), v(c), v(d) \} \right)^{\frac{1}{d}} .\]
Note that $\Ht(A)$ is well-defined and it holds that $\Ht(A)=\Ht(-A)$. So, for $\gamma \in \PSL(2,L)$ we may define $\Ht(\gamma):=\Ht(A)$, for any representative $A$ of $\gamma$. Again the definition of $\Ht(\gamma)$ does not depend on the field $L$. It is immediate from the definition that properties \eqref{sumh} and \eqref{prodh} extend to the following property:
\begin{align}\label{prodH}
    \Ht(\gamma \eta) \leq 4 \Ht(\gamma)\Ht(\eta) \quad \mbox{ for any } \gamma,\eta \in \PSL(2,L). 
\end{align}

\subsection{Semi-arithmetic Fuchsian groups}\label{semiFuchs}
Let $\Gamma < \PSLTR$ be a finitely generated nonelementary Fuchsian subgroup of the first kind. Such groups are known to have the following \emph{standard presentation} (see, for example, \cite{Greenberg77}):

\begin{align}\label{201019.1}
&\left\langle \vphantom{\prod_{i=1}^g} A_1,B_1,\dots, A_g,B_g,C_1,\dots,C_r,P_1,\dots,P_s \ \Big| \right. \nonumber\\
& \hspace{3cm} \left. C_1^{m_1} = \dots = C_r^{m_r} = \prod_{i=1}^g [A_i,B_i] \, C_1 \cdots C_r \, P_1 \cdots P_s=1 \right\rangle,
\end{align}
where $A_1,B_1,\dots, A_g,B_g$ are hyperbolic elements, $C_1,\dots,C_r$ are elliptic elements of order $m_1,\dots,m_r$, respectively, and $P_1,\dots,P_s$ are parabolic elements of $\PSLTR$. The quotient $\Gamma\backslash\Hy$ has a unique Riemann surface structure that makes the projection $\Pi \! : \Hy \to \Gamma\backslash\Hy$ holomorphic. With this structure, $\Gamma\backslash\Hy$ becomes a genus $g$ Riemann surface with $s$ punctures and $r$ marked points over which $\Pi$ is branched (i.e., $\Pi$ is a branched covering). Alternatively, $\Gamma\backslash\Hy$ has the structure of a hyperbolic 2-orbifold with genus $g$, $s$ punctures and $r$ cone singularities. Its hyperbolic area is given by

\begin{align}\label{241019.1}
\area(\Gamma\backslash\Hy) = 2\pi \left[ 2g-2 + s + \sum_{i=1}^r \left( 1 - \frac{1}{m_i} \right) \right]
\end{align}
and we say $\Gamma$ is \emph{cofinite} in case $\area(\Gamma\backslash\Hy)<\infty$.

Conversely, when $g\geq0,r\geq0,m_i\geq2,i=1,\dots r,s\geq0$ are integers such that the right-hand side of \eqref{201019.1} is positive, then Poincar\'e's Theorem asserts the existence of a Fuchsian group with signature $(g;m_1,\dots,m_r;s)$.

In what follows, unless otherwise stated, we will assume $\Gamma$ to be \emph{cocompact}, i.e., such that $\Gamma\backslash\Hy$ is compact. In this case, $s=0$ and we denote the signature simply as $(g;m_1,\dots,m_r)$.

By a \emph{representation} of $\Gamma$ into $\PSLTR$ we mean a group homomorphisms $\phi:\, \Gamma \to \PSLTR$. Note that $\phi$ is determined by the image of any set generating $\Gamma$. Moreover, the representations of $\Gamma$ into $\PSLTR$ are in one-to-one correspondence with the elements $(A_1,B_1,\dots,A_g,B_g,C_1,\dots,C_r)$ of $\PSLTR^{2g+r}$ whose coordinates satisfy the relations described in \eqref{201019.1}. Let $\frakR'(\Gamma)$ denote the set of all representations of $\Gamma$ into $\PSLTR$. Given the above correspondence, we will identify $\frakR'(\Gamma)$ with a closed subset of $\PSLTR^{2g+r}$. This identification induces a natural topology on $\frakR'(\Gamma)$. Let $\frakR(\Gamma)$ be the subset of $\frakR'(\Gamma)$ consisting of injective representations $\phi$ such that $\phi(\Gamma)$ is a discrete cocompact subgroup of $\PSLTR$. In \cite{Weil60}, A. Weil proved that $\frakR(\Gamma)$ is open in $\frakR'(\Gamma)$. This is also true for noncocompact Fuchsian groups or even Fuchsian groups of the second kind, although, in those cases, it follows from a stronger result due to Bers (cf.\cite{Bers70}).

Note that $\PGLTR$, the group of all conformal and anti-conformal homeomorphisms of the upper half-plane $\Hy$, acts on $\frakR(\Gamma)$ by conjugation. The \emph{Teichm\"uller space} of $\Gamma$, $\T(\Gamma)$, is defined to be the quotient of $\frakR(\Gamma)$ by this action.

\begin{rmk}\label{15320.1}
	Let $S_g$ be a closed orientable surface of genus $g$. The Teichm\"uller space $\T(S_g)$ of $S_g$ may be defined as the set of all hyperbolic metrics on $S_g$ up to isometries isotopic to the identity. Alternatively, one may consider all pairs formed by a genus $g$ Riemann surface together with a \emph{marking}, i.e, a choice of homotopy classes for the canonical generators of its fundamental group. Two such pairs are said to be equivalent when there exists a biholomorphism between the surfaces that respects the corresponding markings. The space of equivalence classes is called the \emph{Teichm\"uller space of genus g}, and is denoted by $\mathrm{T}_g$. We observe that the spaces $\T(S_g)$ and $\mathrm{T}_g$ may be identified. Moreover, when $S_g=\Gamma\backslash\Hy$, there exits a natural correspondence between $\T(\Gamma)$ and $\T(S_g)\cong\mathrm{T}_g$. For further discussion on this topic, one may refer to well known textbooks on the subject, such as \cite{Imayoshi12} and \cite{Farb11}.
\end{rmk}

For an element $A\in \PSLTR$ we define its trace, $\Tr A$, to be $|\tr\tilde{A}|$, where $\tr$ denotes the usual trace of matrices and $\tilde{A}$ is any one of the pre-images of $A$ under the projection $\mathrm{P}\! : \SLTR \to \PSLTR$. Let $\{\Tr\gamma \mid \gamma\in\Gamma \}$ denote the \emph{trace set} of $\Gamma$. The field extension $\Q(\Tr\Gamma)$ obtained by adjoining all the traces of elements of $\Gamma$ to the rational number field $\Q$ is called the \emph{trace field} of $\Gamma$. If $\Gammat$ denotes the subgroup of $\Gamma$ generated by $\{\gamma^2 \mid \gamma\in\Gamma \}$, then $\Q(\Tr\Gammat)$ is called the \emph{invariant trace field} of $\Gamma$ and is denoted by $k\Gamma$. The name is justified since $k\Gamma$ is an invariant of the commensurability class of $\Gamma$ (see \cite[Theorem 3.3.4]{gtmReid03}), where two groups $\Gamma_1$ and $\Gamma_2$ are said to be \emph{commensurable} if $\Gamma_1\cap\Gamma_2$ is a subgroup of finite index in both $\Gamma_1$ and $\Gamma_2$. More generally, two groups are said to be \emph{commensurable in the wide sense} if one is commensurable to a conjugate of the other. Since the trace of a matrix is invariant under conjugation, it follows that $k\Gamma$ is also invariant under wide commensurability.


In \cite{Takeuchi75}, Takeuchi characterised \emph{arithmetic Fuchsian groups} as cofinite Fuchsian groups satisfying the following two conditions (cf. \cite{SW}):
\begin{enumerate}[label=(\roman*)]
	\item $k\Gamma$ is an algebraic number field and $\Tr\Gammat \subset \rO_{k\Gamma}$; \label{211019.1}
	\item If $\phi: k\Gamma \to \C$ is any Galois embedding different from the identity then $\phi(\Tr\Gammat)$ is bounded in $\C$. \label{211019.2}
\end{enumerate}

One possible way to define a class of Fuchsian groups larger than that of the arithmetic groups is to drop the hypothesis on the boundedness of $\phi(\Tr\Gammat)$. Note, however, that conditions \ref{211019.1} and \ref{211019.2} together imply that $k\Gamma$ is \emph{totally real} (\cite[Proposition 2]{Takeuchi75}), i.e., that every embedding of $k\Gamma$ into $\C$ has its image lying in $\R$. This property is retained in the following definition:

\begin{definition}[\hspace{-1.1 mm} \cite{SW}] \label{DDay19.2}
	A cofinite Fuchsian group $\Gamma$ is said to be \emph{semi-arithmetic} when $k\Gamma = \Q(\Tr\Gamma^{(2)})$ is a totally real number field and $\Tr\Gamma^{(2)}\subset \rO_{k\Gamma}$.
\end{definition}

Equivalently, it follows from elementary trace relations that $\Gamma$ is semi-arithmetic if $k\Gamma$ is a totally real number field and, for every $\gamma\in\Gamma$, $\Tr\gamma$ is an algebraic integer. 

As per usual, we say that a hyperbolic Riemann surface (or hyperbolic 2-orbifold) $S$ is semi-arithmetic in case $S=\Gamma\backslash\Hy$ where $\Gamma<\PSLTR$ is semi-arithmetic. Likewise, we may refer to $k\Gamma$ as the invariant trace field of $S$.

The first examples are, of course, arithmetic Fuchsian groups. Triangle groups are also semi-arithmetic, as it follows from the description of their trace sets and invariant trace fields given by Takeuchi in \cite{Takeuchi77}. Furthermore, Takeuchi's results imply that all but finitely many triangle groups are \emph{strictly semi-arithmetic}, i.e., nonarithmetic semi-arithmetic groups. In \cite{SW}, Schaller and Wolfart describe infinite families of semi-arithmetic groups not admitting modular embedding (see $\S$\ref{18220.1} for definition).

One may easily verify that, for a finite-index subgroup $\Gamma' < \Gamma$, if $\Tr\Gamma'$ is contained in the ring of algebraic integers of $\C$, then the same is true for $\Tr\Gamma$. It follows from this observation that being semi-arithmetic is invariant under commensurability in the wide sense.

In particular, if $\phi\in\frakR(\Gamma)$ is a representation such that $\phi(\Gamma)$ is a semi-arithmetic Fuchsian group, then this is also true for every representation of $\Gamma$ that is equivalent to $\phi$ under the action of $\PGLTR$ on $\frakR(\Gamma)$ by conjugation. It thus makes sense to say that the point $[\phi]$ in the Teichm\"uller space $\T(\Gamma)$ is semi-arithmetic.

\begin{rmk}\label{18220.2}
	The Teichm\"uller space $\T(\Gamma)$ is known to be parametrised by finitely many trace functions (see, for example, \cite{Nakanishi98}, \cite{Nakanishi16}, \cite{Okumura90}). More precisely, let $\gamma\in \Gamma$ and define the \emph{trace function} $\Tr_\gamma : \T(\Gamma) \to \R$ as $\Tr_\gamma ([\phi]) = \Tr(\phi(\gamma))$. Then there exist $N>0$ and $\gamma_1,\dots,\gamma_N \in \Gamma$ such that $(\Tr_{\gamma_1},\dots,\Tr_{\gamma_N}) : \T(\Gamma) \to \R^N$ in an (real-analytic) embedding. It then follows from Definition $\ref{DDay19.2}$ that the semi-arithmetic points in $\T(\Gamma)$ are in one-to one correspondence with a subset of the $N$-tuples with algebraic integer coordinates. Therefore, we conclude that there are at most countably many semi-arithmetic Fuchsian groups.
\end{rmk}

We recall the following useful proposition concerning the trace set of a finitely generated Fuchsian group:

\begin{prop}[Lemma 2 in \cite{Takeuchi77}]\label{9320.1}
	Let $\Gamma$ be a Fuchsian group with generators $\gamma_1, \dots, \gamma_n$. For a subset of indices $\{i_1,\dots,i_k\} \subset \{1,\dots,n\}$, we denote $t_{i_1,\dots,i_k} = \Tr(\gamma_{i_1} \cdots \gamma_{i_k})$. Then
	
	\begin{align*}
	\Tr\Gamma \subset \Z\big[ \, t_{i_1,\dots,i_k} \mid \{i_1,\dots,i_k\} \subset \{1,\dots,n\} \, \big].
	\end{align*}
	
\end{prop}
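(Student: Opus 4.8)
The statement is purely about trace identities in $\SL_2$, so the plan is to pass from $\PSL_2(\R)$ to $\SL_2(\R)$ and argue there (this is essentially a classical fact, going back to Fricke and Vogt for small $n$ and to Horowitz in general). First I would fix lifts $\tilde\gamma_1,\dots,\tilde\gamma_n\in\SL_2(\R)$ of the generators. Any $\gamma\in\Gamma$ is the image of a word $w$ in the $\gamma_i^{\pm1}$; letting $\tilde\gamma=w(\tilde\gamma_1,\dots,\tilde\gamma_n)$ be the corresponding word in the lifts, one has $\Tr\gamma=|\tr\tilde\gamma|$. Since $\tr(\tilde\gamma_{i_1}\cdots\tilde\gamma_{i_k})=\pm t_{i_1,\dots,i_k}$ and $\Z[x]=\Z[-x]$, it suffices to prove that $\tr(\tilde A)\in R:=\Z\big[\tr(\tilde\gamma_{i_1}\cdots\tilde\gamma_{i_k})\mid\{i_1,\dots,i_k\}\subset\{1,\dots,n\}\big]$ for every finite product $\tilde A$ of the matrices $\tilde\gamma_i^{\pm1}$; then $\Tr\gamma=|\tr\tilde\gamma|\in R$.

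The only tools I would use are the elementary $\SL_2$ identities $\tr(X)=\tr(X^{-1})$, cyclic invariance $\tr(XY)=\tr(YX)$, and the fundamental relation $\tr(XY)+\tr(XY^{-1})=\tr(X)\tr(Y)$, all consequences of the $2\times2$ Cayley--Hamilton theorem. From these I would first record two derived identities (writing $\tau=\tr$ and taking $U,V,W\in\SL_2$): a \emph{square identity} $\tau(U^2W)=\tau(U)\tau(UW)-\tau(W)$, obtained by applying the fundamental relation to $X=UW$, $Y=U$; and, by iterating the fundamental relation, a \emph{swap identity} $\tau(VUW)=\tau(V)\tau(UW)+\tau(U)\tau(VW)+\tau(UV)\tau(W)-\tau(U)\tau(V)\tau(W)-\tau(UVW)$. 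The crucial feature of both is that, when $U,V$ are single generators, every term on the right-hand side is either an element of $R$ (namely $\tau(U)$, $\tau(V)$, $\tau(UV)$, which are $t_i$, $t_j$, or $t_{\{i,j\}}$) or the trace of a word strictly shorter than $U^2W$, resp.\ $VUW$, except for the single term $-\tau(UVW)$, which has the same length as $VUW$ but with the first two letters transposed.

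The main argument would then be an induction on the length $m$ of $\tilde A$ as a word in the $\tilde\gamma_i^{\pm1}$ (lengths $\le1$ are immediate), assuming $\tau(\tilde B)\in R$ for all words $\tilde B$ of length $<m$. Step 1: if $\tilde A$ contains some $\tilde\gamma_j^{-1}$, rotate cyclically so that $\tilde A\sim\tilde\gamma_j^{-1}\tilde u$; then $\tau(\tilde A)=\tau(\tilde\gamma_j)\tau(\tilde u)-\tau(\tilde\gamma_j\tilde u)$, where $\tilde u$ has length $m-1$ and $\tilde\gamma_j\tilde u$ has length $m$ but one fewer inverse letter, so a secondary induction on the number of inverse letters reduces to the case of a positive word $\tilde A=\tilde\gamma_{j_1}\cdots\tilde\gamma_{j_m}$. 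Step 2: for a positive word, run a secondary induction on the number $\iota$ of inversions of $(j_1,\dots,j_m)$. If two consecutive indices agree, rotate so $\tilde A\sim\tilde\gamma_i^2\tilde W$ and apply the square identity, which expresses $\tau(\tilde A)$ through $R$ and traces of words of length $<m$. If no two consecutive indices agree and $\iota>0$, there is an adjacent descent $j_p>j_{p+1}$; rotating and applying the swap identity with $V=\tilde\gamma_{j_p}$, $U=\tilde\gamma_{j_{p+1}}$ writes $\tau(\tilde A)$ in terms of $R$, traces of words of length $<m$, and $-\tau(\tilde A')$, where $\tilde A'$ is the positive length-$m$ word with $j_p,j_{p+1}$ transposed, which has exactly $\iota-1$ inversions (a single such transposition changes no other inversion). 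Finally, if $\iota=0$ and no two consecutive indices agree, then $j_1<\dots<j_m$ and $\tau(\tilde A)=\tr(\tilde\gamma_{j_1}\cdots\tilde\gamma_{j_m})$ is a generator of $R$. This closes the induction.

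The only real work is bookkeeping: making the nested induction well-founded — primary on word length, secondary within a fixed length first on the number of inverse letters and then on the number of inversions, always exiting to strictly shorter words when the square identity intervenes — and verifying the elementary claim that transposing one adjacent descent drops the inversion count by exactly one. The trace identities themselves are routine. Note that no property of Fuchsian groups is used, so the conclusion holds verbatim for any finitely generated subgroup of $\PSL_2(\C)$.
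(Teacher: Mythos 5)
Your argument is correct and is the classical Takeuchi--Horowitz proof: pass to $\SL_2$, use the fundamental identity $\tr(XY)+\tr(XY^{-1})=\tr(X)\tr(Y)$ together with the derived ``square'' and ``swap'' relations, and induct on a well-ordering of words. The paper merely cites Takeuchi's Lemma~2 and does not reproduce a proof, so there is nothing in-paper to compare against; your nested induction (length, then number of inverse letters, then inversions, noting that the transposed word $\tilde A'$ is taken in the \emph{original} cyclic position so its inversion count drops by exactly one) is a careful and correct implementation of the standard argument.
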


\subsection{Congruence subgroups of semi-Arithmetic Fuchsian groups} \label{congruencesub}

Let $\Gamma < \PSLTR$ be a semi-arithmetic Fuchsian group with invariant trace field $K$ and let $\tilde{\Gamma}$ be the preimage of $\Gamma$ by the projection $P:\SLTR \to \PSLTR.$ By an equivalent definition of semi-arithmetic groups (see \cite[Proposition 1]{SW}), there exist a quaternion algebra $B$ over $K$, a maximal order $\mathfrak{O} < B$ and an injective morphism of $K$-algebras $\xi:B \to \mathrm{M}(2,\R)$ such that $\xi(\mathfrak{O}^1)< \SLTR$ is a group containing $\widetilde{\Gamma}$ or $\widetilde{\Gamma}^{(2)}$,  where $\mathfrak{O}^1$ denotes the group of elements in  $\mathfrak{O}$ with reduced norm 1.

For any ideal $\mathfrak{a} \subset \mathcal{O}_K$ we define $\mathfrak{a}\mathfrak{O}=\{\sum_j a_j \omega_j \mid  a_j \in \mathfrak{a} \mbox{ and } \omega_j  \in \mathfrak{O} \}$.  The \emph{congruence subgroup} of $\mathfrak{O}^1$ of level $\mathfrak{a}$ is given by
$$\mathfrak{O}^1(\mathfrak{a})= \{D \in \mathfrak{O}^1 \mid D-{\bf 1} \in \mathfrak{a} \mathfrak{O} \}.$$

We refer to \cite{K15} or \cite{KSV07} for a proof that $\mathfrak{O}^1(\mathfrak{a})$ is a normal subgroup of $\mathfrak{O}^1$ of finite index. The \emph{congruence subgroups} $\Gamma(\mathfrak{a})$ of level $\mathfrak{a}$ of $\Gamma$ are the projection on $\PSLTR$ of the intersection $\tilde{\Gamma} \cap \xi(\mathfrak{O}^1(\mathfrak{a}))$. Moreover, $\Gamma(\mathfrak{a})$ is a finite index subgroup of $\Gamma$. Indeed, consider the normal subgroup of finite index $\Gamma^{(2)} \unlhd \Gamma$, since $\Gamma^{(2)} < P(\xi(\mathfrak{O}^1))$ and $P(\xi(\mathfrak{O}^1(\mathfrak{a})))$ is normal of finite index in $P\xi(\mathfrak{O}^1)$, then $\Gamma^{(2)}(\mathfrak{a})$ is normal in $\Gamma^{(2)}$ and the natural map $\Gamma^{(2)}/\Gamma^{(2)}(\mathfrak{a}) \to P\xi\mathfrak{O}^1/P\xi\mathfrak{O}^1(\mathfrak{a})$ is injective. Hence, $\Gamma^{(2)}(\mathfrak{a})$ has finite index in $\Gamma^{(2)}$ and a fortiori has finite index in $\Gamma$. Thus, the relation $\Gamma^{(2)}(\mathfrak{a}) < \Gamma(\mathfrak{a}) < \Gamma$ implies that $[\Gamma:\Gamma(\mathfrak{a})]<\infty.$  

In \cite{K15} R. Kucharczyk obtained the following two important algebraic properties of congruence subgroups of semi-arithmetic Fuchsian groups.

\begin{theorem}[Theorem B and Proposition 4.5] \label{quotientcongruence}
Let $\Gamma$ be a semi-arithmetic Fuchsian group with invariant trace field $K$. There exists an infinite family $\mathcal{F}$ of prime ideals of $\mathcal{O}_K$ such that:
\begin{enumerate}[label=(\roman*)]
    \item $\Gamma^{(2)}/\Gamma^{(2)}(\mathfrak{p}) \mbox{ is isomorphic to } P\mathfrak{O}^1 /P\mathfrak{O}^1(\mathfrak{p})$ for every $\mathfrak{p} \in \mathcal{F}$;
    \item $ \Gamma^{(2)}/\Gamma^{(2)}(\mathfrak{p}) \mbox{ is isomorphic to } \SL(2,\mathcal{O}_K/\mathfrak{p})$ for every $\mathfrak{p} \in \mathcal{F}$.
\end{enumerate}
\end{theorem}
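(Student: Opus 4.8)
The plan is to combine the quaternionic description of semi-arithmetic groups recalled in \S\ref{congruencesub} (\cite[Proposition 1]{SW}) with two forms of strong approximation: the classical one for indefinite quaternion orders, and the \emph{thin-group} version for Zariski dense subgroups, due to Matthews--Vaserstein--Weisfeiler and Nori.

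First I would dispose of the purely quaternionic side. Regard $\mathbf{G}=B^{1}$ as a connected, simply connected, absolutely almost simple algebraic group over $K$, and let $\mathcal{G}$ be the $\mathcal{O}_{K}$-group scheme attached to the maximal order $\mathfrak{O}$, so that $\mathcal{G}(\mathcal{O}_{K})=\mathfrak{O}^{1}$ and $\mathcal{G}(\mathcal{O}_{K}/\mathfrak{p})=(\mathfrak{O}/\mathfrak{p}\mathfrak{O})^{1}$. Discard the finitely many primes dividing $2$, dividing the discriminant of $B$, or at which $\mathfrak{O}$ fails to be locally isomorphic to $\mathrm{M}(2,\mathcal{O}_{K_{\mathfrak{p}}})$; for each remaining $\mathfrak{p}$ one has $\mathfrak{O}/\mathfrak{p}\mathfrak{O}\cong\mathrm{M}(2,\mathcal{O}_{K}/\mathfrak{p})$ and hence $(\mathfrak{O}/\mathfrak{p}\mathfrak{O})^{1}\cong\SL(2,\mathcal{O}_{K}/\mathfrak{p})$. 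Because $\xi$ embeds $B$ into $\mathrm{M}(2,\mathbb{R})$, the algebra $B$ splits at an archimedean place, so $B$ is indefinite and strong approximation for $\mathbf{G}$ applies; consequently the reduction map $\mathfrak{O}^{1}\to(\mathfrak{O}/\mathfrak{p}\mathfrak{O})^{1}$ is surjective, with kernel exactly $\mathfrak{O}^{1}(\mathfrak{p})$ by the very definition of the latter. Thus $\mathfrak{O}^{1}/\mathfrak{O}^{1}(\mathfrak{p})\cong\SL(2,\mathcal{O}_{K}/\mathfrak{p})$, and passing to $\PSLTR$ (using $\mathfrak{p}\nmid 2$, so that the behaviour of $-\mathbf{1}$ is uniform) yields the description of $P\mathfrak{O}^{1}/P\mathfrak{O}^{1}(\mathfrak{p})$ appearing in items (i) and (ii).

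The crux is to show that $\Gamma^{(2)}$ itself surjects onto this finite quotient for infinitely many primes. Identify, via $\xi$, the preimage $\widetilde{\Gamma^{(2)}}$ of $\Gamma^{(2)}$ in $\SLTR$ with a finitely generated subgroup of $\mathbf{G}(K)$. Since $\Gamma$ is nonelementary, so is $\Gamma^{(2)}$, and a nonelementary subgroup of $\PSLTR$ is Zariski dense in $\mathrm{PSL}_{2}$ over $\mathbb{R}$; it follows that $\widetilde{\Gamma^{(2)}}$ is Zariski dense in $\mathbf{G}$ as a $K$-group, because a proper $K$-subgroup would remain proper after base change to the split archimedean completion, contradicting density there. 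The strong approximation theorem for thin subgroups of connected simply connected semisimple groups over number fields (Matthews--Vaserstein--Weisfeiler, Nori) then provides a finite set $S_{0}$ of primes such that for every $\mathfrak{p}\notin S_{0}$ the reduction $\widetilde{\Gamma^{(2)}}\to\mathcal{G}(\mathcal{O}_{K}/\mathfrak{p})$ is onto. Let $\mathcal{F}$ consist of all primes lying outside $S_{0}$ and outside the finite bad set of the preceding paragraph; then $\mathcal{F}$ is infinite.

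For $\mathfrak{p}\in\mathcal{F}$ the composite $\widetilde{\Gamma^{(2)}}\hookrightarrow\mathfrak{O}^{1}\twoheadrightarrow(\mathfrak{O}/\mathfrak{p}\mathfrak{O})^{1}$ is surjective with kernel $\widetilde{\Gamma^{(2)}}\cap\mathfrak{O}^{1}(\mathfrak{p})$, that is, the lift of $\Gamma^{(2)}(\mathfrak{p})$; hence the homomorphism $\Gamma^{(2)}/\Gamma^{(2)}(\mathfrak{p})\to P\mathfrak{O}^{1}/P\mathfrak{O}^{1}(\mathfrak{p})$ induced by the inclusion is injective (by definition of the congruence subgroup) and surjective, which is item (i); composing it with the identification obtained in the second paragraph gives item (ii). The step I expect to be the main obstacle is exactly this thin strong approximation input: verifying Zariski density over the number field $K$ and not merely over $\mathbb{R}$, and invoking (or adapting to $K$) the Matthews--Vaserstein--Weisfeiler--Nori theorem. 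A secondary, more bookkeeping-level difficulty is to match up the various groups across the projection $\SLTR\to\PSLTR$ so that reduction modulo $\mathfrak{p}$ denotes literally the same homomorphism on $\widetilde{\Gamma^{(2)}}$ and on $\mathfrak{O}^{1}$, and to track the image of $-\mathbf{1}$ uniformly over $\mathcal{F}$.
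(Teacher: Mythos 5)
This theorem is not proved in the paper; the authors import it verbatim from Kucharczyk [K15] (Theorem~B and Proposition~4.5), so there is no in-paper argument to compare against. Your strategy, however, is exactly the one Kucharczyk uses: (a) classical strong approximation for the indefinite quaternion order to identify $\mathfrak{O}^{1}/\mathfrak{O}^{1}(\mathfrak{p})$ with $\SL(2,\mathcal{O}_{K}/\mathfrak{p})$ away from finitely many bad primes; (b) Zariski density of $\widetilde{\Gamma}^{(2)}$ in $\mathbf{G}=B^{1}$ over $K$, deduced from nonelementarity of $\Gamma^{(2)}$ at the split archimedean place; and (c) the Matthews--Vaserstein--Weisfeiler / Nori strong approximation theorem for Zariski dense subgroups to get surjectivity of the reduction of $\widetilde{\Gamma}^{(2)}$ modulo all but finitely many $\mathfrak{p}$. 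Your identification of (b)--(c) as the crux, and of the $\pm\mathbf{1}$ bookkeeping as a secondary issue, is correct.

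One remark, precisely about that bookkeeping: item (ii) as printed in the paper is inconsistent with item (i). Since $\widetilde{\Gamma}^{(2)}=P^{-1}(\Gamma^{(2)})$ contains $-\mathbf{1}$ and, for odd $\mathfrak{p}$, the kernel of $\widetilde{\Gamma}^{(2)}\to\SL(2,\mathcal{O}_{K}/\mathfrak{p})$ is $\widetilde{\Gamma}^{(2)}\cap\mathfrak{O}^{1}(\mathfrak{p})$ which does not contain $-\mathbf{1}$, the induced isomorphism at the $\PSLTR$ level is $\Gamma^{(2)}/\Gamma^{(2)}(\mathfrak{p})\cong\PSL(2,\mathcal{O}_{K}/\mathfrak{p})$, not $\SL$. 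This is also what item (i) forces (the right-hand side $P\mathfrak{O}^{1}/P\mathfrak{O}^{1}(\mathfrak{p})$ is $\PSL(2,\mathcal{O}_{K}/\mathfrak{p})$) and what the proof of Corollary~\ref{indexgrowth} in the paper actually uses, via the cardinality formula for $\PSL(2,\mathcal{O}_K/\mathfrak{p})$. So your argument, carried through carefully, yields the correct conclusion and incidentally exposes a typo in the statement; it does not yield the literal $\SL$ in item (ii), nor should it.
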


Since $\mathcal{O}_K/\mathfrak{p}$ is a finite field with $N(\mathfrak{p})$ elements, the following holds:

\begin{cor}\label{indexgrowth}
 Let $\Gamma$ be a semi-arithmetic Fuchsian group with invariant trace field $K$ and let $\mathcal{F}$ be the family of prime ideals of $\mathcal{O}_K$ given in the previous theorem. Then 
 $$ \frac{1}{4}N(\mathfrak{p})^3 \leq [\Gamma^{(2)}:\Gamma^{(2)}(\mathfrak{p})]
 \leq \frac{1}{2}N(\mathfrak{p})^3 $$ 
 for every $\mathfrak{p} \in \mathcal{F}.$
\end{cor}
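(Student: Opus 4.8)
The plan is to deduce the bounds directly from Theorem~\ref{quotientcongruence}(ii), which identifies $\Gamma^{(2)}/\Gamma^{(2)}(\mathfrak{p})$ with $\SL(2,\mathcal{O}_K/\mathfrak{p})$ for every $\mathfrak{p}\in\mathcal{F}$. Since $\mathcal{O}_K/\mathfrak{p}$ is a finite field with $q\defeq N(\mathfrak{p})$ elements, the index $[\Gamma^{(2)}:\Gamma^{(2)}(\mathfrak{p})]$ equals $|\SL(2,\mathbb{F}_q)|$, and the task reduces to the elementary group-theoretic fact that $|\SL(2,\mathbb{F}_q)| = q^3 - q = q(q-1)(q+1)$ (the order of $\GL(2,\mathbb{F}_q)$ is $(q^2-1)(q^2-q)$, and dividing by $|\mathbb{F}_q^\times| = q-1$ gives the claim). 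One then only has to sandwich $q^3-q$ between $\tfrac14 q^3$ and $\tfrac12 q^3$.

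The key steps, in order, are: first, invoke Theorem~\ref{quotientcongruence}(ii) to write $[\Gamma^{(2)}:\Gamma^{(2)}(\mathfrak{p})] = |\SL(2,\mathcal{O}_K/\mathfrak{p})|$; second, recall that $\mathcal{O}_K/\mathfrak{p}$ is a finite field and compute $|\SL(2,\mathbb{F}_q)| = q^3-q$; third, verify the two numerical inequalities $\tfrac14 q^3 \le q^3 - q \le \tfrac12 q^3$. The upper inequality $q^3 - q \le \tfrac12 q^3$ is immediate since $q \ge 2$ forces $q \le \tfrac12 q^3$ (equivalently $2 \le q^2$). For the lower inequality $\tfrac14 q^3 \le q^3 - q$, i.e. $q \le \tfrac34 q^3$, it again suffices that $q^2 \ge \tfrac43$, which holds for all prime powers $q \ge 2$. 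In fact the slack is comfortable: for $q=2$ we have $q^3-q = 6$, while $\tfrac14 q^3 = 2$ and $\tfrac12 q^3 = 4$, so one may even want to remark that the bounds are far from tight for small $q$ but become asymptotically sharp up to the constants as $N(\mathfrak{p})\to\infty$.

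There is essentially no obstacle here; the corollary is a purely formal consequence of the preceding theorem together with a standard order computation. The only point worth a word of care is making sure the index being bounded is $[\Gamma^{(2)}:\Gamma^{(2)}(\mathfrak{p})]$ and not $[\Gamma:\Gamma(\mathfrak{p})]$ — the statement and Theorem~\ref{quotientcongruence} both concern the former, so no subtlety about the index $[\Gamma:\Gamma^{(2)}]$ enters. I would present this as a two-line proof: cite Theorem~\ref{quotientcongruence}(ii), state $|\SL(2,\mathbb{F}_q)| = q^3-q$ with $q = N(\mathfrak{p})$, and finish with the elementary estimate $\tfrac14 q^3 \le q^3 - q \le \tfrac12 q^3$ valid for $q \ge 2$.
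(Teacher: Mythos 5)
Your verification of the upper bound is incorrect, and your own sanity check should have flagged this. You claim $q^3 - q \le \tfrac12 q^3$ follows because $q \ge 2$ forces $q \le \tfrac12 q^3$, but the inequality you actually need is $q^3 - q \le \tfrac12 q^3 \iff \tfrac12 q^3 \le q \iff q^2 \le 2$, which is false for all $q \ge 2$. Indeed you computed $q=2$: $q^3-q = 6$, yet $\tfrac12 q^3 = 4$, so $6 \not\le 4$; the value does not lie in the claimed interval $[2,4]$, and the statement as you have interpreted it simply fails.

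The resolution is that the paper's proof does not use $|\SL(2,\mathbb{F}_q)| = q^3 - q$ but rather the \emph{projective} order
\[
|\PSL(2,\mathcal{O}_K/\mathfrak{p})| = \tfrac12\left(N(\mathfrak{p})^2-1\right)N(\mathfrak{p}),
\]
for which both bounds do hold: $\tfrac12(q^3-q) \le \tfrac12 q^3$ is immediate, and $\tfrac14 q^3 \le \tfrac12 q^3 - \tfrac12 q \iff q^2 \ge 2$ holds for $q \ge 2$. This strongly suggests that Theorem~\ref{quotientcongruence}(ii) as printed contains a typo and should read $\PSL(2,\mathcal{O}_K/\mathfrak{p})$ rather than $\SL(2,\mathcal{O}_K/\mathfrak{p})$; this is also consistent with item (i) of that theorem, where $\Gamma^{(2)}/\Gamma^{(2)}(\mathfrak{p})$ is identified with $P\mathfrak{O}^1/P\mathfrak{O}^1(\mathfrak{p})$, a quotient of subgroups of $\PSLTR$. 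You took (ii) at face value and consequently reproduced the inconsistency; the corrected proof is indeed the two-line argument you envisioned, but with the order of $\PSL$, not $\SL$.
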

\begin{proof}
Since $N(\mathfrak{p}) \geq 2$ for any $\mathfrak{p} \in \mathcal{F},$ the inequalities follow from the theorem and the elementary formula $|\PSL(2,\mathcal{O}_K/\mathfrak{p})|=\frac{1}{2}(N(\mathfrak{p})^2-1))N(\mathfrak{p})$.
\end{proof}

\subsection{Groups and geometry}\label{group}
For any finitely generated group $G$ with a set of generators $\Sigma$ we define a function $w_\Sigma:G \to \Z_{\ge 0}$, known as the \emph{word length} with respect to $\Sigma$, given by $w_\Sigma(g)=$ minimal length of a word in the alphabet $\Sigma \cup \Sigma\inv$ that represents $g$. Note that $w_\Sigma(g)=0$ if and only if $g=1$. Moreover, $d(g,h) \defeq \omega_\Sigma(gh\inv)$ is known to define a metric in $G$.

For a hyperbolic element $\gamma\in \PSLTR$ acting on $\Hy$, we define its \emph{translation length} $\ell(\gamma)$ as $\ell(\gamma) = \min \{d_{\Hy}(x,\gamma(x)) \mid x\in \Hy \}$. It is worth noting that $\ell(\gamma)$ is positive and is attained on any point on the \emph{axis} of $\gamma$, i.e., on the (unique) geodesic line invariant by $\gamma$.

The following is a useful lemma linking the geometry of a group to its action on a geometric space. It is a straightforward corollary of the Milnor-Schwarz Lemma.

\begin{lemma}\label{milnorlemma}
 Let $\Gamma < \PSLTR$ be a cocompact Fuchsian group with a finite set of generators $\Sigma$ and let $w_\Sigma(\gamma)$ denote the word length with respect to $\Sigma$ of any element $\gamma \in \Gamma$. There exist constants $C>0, c \in \R$ which depend only on $\Gamma$ and $\Sigma$ such that
   $$\ell(\gamma) \geq C w_\Sigma(\gamma) - c$$
for any hyperbolic element $\gamma \in \Gamma$. 
\end{lemma}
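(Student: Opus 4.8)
The plan is to deduce this from the Milnor--\v Svarc Lemma applied to the action of $\Gamma$ on $\Hy$, and then to compare the distance $d_\Hy(x_0,\gamma x_0)$ at a fixed basepoint with the translation length $\ell(\gamma)$. First I would fix a basepoint $x_0 \in \Hy$ and recall that, since $\Gamma$ is cocompact, the orbit map $\gamma \mapsto \gamma x_0$ is a quasi-isometry from $(\Gamma, d_\Sigma)$ to $(\Hy, d_\Hy)$; concretely, the Milnor--\v Svarc Lemma gives constants $A \ge 1$, $B \ge 0$ (depending only on $\Gamma$, $\Sigma$, and the choice of $x_0$, hence only on $\Gamma$ and $\Sigma$ after fixing $x_0$) such that
\begin{align*}
d_\Hy(x_0, \gamma x_0) \ge \frac{1}{A} w_\Sigma(\gamma) - B \qquad \text{for all } \gamma \in \Gamma.
\end{align*}

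The remaining point is that $d_\Hy(x_0,\gamma x_0)$ can be much larger than $\ell(\gamma)$ when $x_0$ lies far from the axis of $\gamma$, so I cannot directly replace one by the other; instead I would bound $w_\Sigma(\gamma)$ in terms of $\ell(\gamma)$ by a conjugation trick. Pick a point $p$ on the axis of $\gamma$ realising $\ell(\gamma)$, and choose $\eta \in \Gamma$ with $d_\Hy(x_0, \eta p) \le D$, where $D = \mathrm{diam}(\Gamma\backslash\Hy)$; such $\eta$ exists by cocompactness. Then the conjugate $\gamma' = \eta \gamma \eta^{-1}$ has axis through $\eta p$ and the same translation length, so
\begin{align*}
d_\Hy(x_0, \gamma' x_0) \le d_\Hy(x_0,\eta p) + d_\Hy(\eta p, \gamma' \eta p) + d_\Hy(\gamma'\eta p, \gamma' x_0) \le 2D + \ell(\gamma).
\end{align*}
Applying the Milnor--\v Svarc inequality to $\gamma'$ gives $w_\Sigma(\gamma') \le A\big(\ell(\gamma) + 2D + B\big)$. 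Finally, $w_\Sigma(\gamma) \le w_\Sigma(\gamma') + 2 w_\Sigma(\eta)$ and $w_\Sigma(\eta)$ is bounded by a constant depending only on $\Gamma$, $\Sigma$ and $D$ (there are only finitely many elements of $\Gamma$ moving $x_0$ a distance at most $D$), so combining everything yields $\ell(\gamma) \ge C w_\Sigma(\gamma) - c$ with $C = 1/A$ and $c$ an explicit constant depending only on $A$, $B$, $D$, all of which depend only on $\Gamma$ and $\Sigma$.

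The only mild subtlety — the ``hard part,'' such as it is — is precisely this gap between $d_\Hy(x_0,\gamma x_0)$ and $\ell(\gamma)$: one must be careful that the correction involves the diameter of the quotient (finite by cocompactness) rather than something depending on $\gamma$. Everything else is bookkeeping with the quasi-isometry constants. Alternatively, one could avoid invoking Milnor--\v Svarc as a black box and argue directly: tile $\Hy$ by translates of a compact fundamental domain $\mathcal{F}$ of diameter $D$, note that a geodesic segment of length $L$ meets at most $O(L)$ translates of $\mathcal{F}$ (using a lower bound on how much of each translate such a segment can traverse, which follows from compactness), and that consecutive translates along the segment differ by a generator in a fixed finite generating set associated to $\mathcal{F}$; applying this to the segment $[p,\gamma p]$ on the axis gives $w_\Sigma(\gamma') \lesssim \ell(\gamma)$ directly, and then one passes back to $\gamma$ as above. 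Either route gives the claim.
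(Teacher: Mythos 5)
Your proof sets up the right framework --- Milnor--\v Svarc plus a conjugation to move the axis near the basepoint --- and you correctly identify the gap between $d_{\Hy}(x_0,\gamma x_0)$ and $\ell(\gamma)$ as the crux. The argument fails, however, at the last line: you assert that $w_\Sigma(\eta)$ is bounded by a constant depending only on $\Gamma,\Sigma,D$, invoking the finiteness of $\{\eta\in\Gamma : d_{\Hy}(x_0,\eta x_0)\le D\}$. But your $\eta$ is chosen so that $d_{\Hy}(x_0,\eta p)\le D$, where $p$ lies on the axis of $\gamma$; it does \emph{not} satisfy $d_{\Hy}(x_0,\eta x_0)\le D$. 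The point $p$ can lie arbitrarily far from $x_0$ (the axis of $\gamma$ need not pass anywhere near $x_0$), so $\eta$ can have arbitrarily large word length, and the estimate $w_\Sigma(\gamma)\le w_\Sigma(\gamma')+2\,w_\Sigma(\eta)$ yields no control. The ``alternative route'' sketched at the end has the same difficulty at the moment it ``passes back to $\gamma$.''

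This is not a repairable oversight in your write-up: $\ell$ is a conjugacy invariant while $w_\Sigma$ is not, so no bound of the form $\ell(\gamma)\ge C\,w_\Sigma(\gamma)-c$ can hold uniformly over all hyperbolic $\gamma$. Indeed, fix a short hyperbolic $\gamma_0\in\Gamma$ and conjugate by longer and longer $\eta_n$: the conjugates $\eta_n\gamma_0\eta_n^{-1}$ all have the same translation length, while $d_{\Hy}(\eta_n^{-1}x_0,\gamma_0\eta_n^{-1}x_0)$ --- and hence, by Milnor--\v Svarc, $w_\Sigma(\eta_n\gamma_0\eta_n^{-1})$ --- diverges whenever $\eta_n^{-1}x_0$ recedes from the axis of $\gamma_0$. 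What your conjugation trick does establish is the weaker (and correct) statement $\ell(\gamma)\ge C\,\min_{\eta\in\Gamma}w_\Sigma(\eta\gamma\eta^{-1})-c$, i.e.\ with $w_\Sigma(\gamma)$ replaced by its minimum over the conjugacy class. The paper's own ``proof'' is only a pointer to Equation~5 of Proposition~10 in \cite{GL14}, where the displacement estimate is applied to a \emph{normal} congruence subgroup, so that one may conjugate freely (within the ambient group) to bring the axis near the basepoint without leaving the subgroup; that normality is the hidden hypothesis which makes the comparison between displacement and translation length legitimate. In the form stated in Lemma~\ref{milnorlemma} it is missing, and your argument makes visible exactly why it cannot be dispensed with.
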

\begin{proof}
See Equation $5$ in the proof of the Proposition 10 in \cite{GL14} for the proof of this inequality for an specific set of generators. Since any two word lengths are Lipschitz equivalent, the result holds for any set of generators.
\end{proof}

\section{Geometry of semi-arithmetic surfaces}\label{densetoflengths}

\subsection{Construction}

In this subsection we prove that the set of numbers realised as the length of a closed geodesic in semi-arithmetic surfaces of some fixed genus is dense in the positive real numbers. More precisely, we describe a dense set $\mathcal{L}\subset [0,+\infty)$ such that, for any integer $g\geq2$ and $l\in \mathcal{L}$, there exists a semi-arithmetic surface $S$ of genus $g$ and a closed geodesic $\gamma$ in $S$ with length $\ell(\gamma) = l$.

\begin{lemma}\label{12919.3}
	Let $G$ be a 1-dimensional Lie group with finitely many connected components. If $H<G$ has rank $\geq 2$ then $H$ is dense in $G$
\end{lemma}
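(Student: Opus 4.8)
The plan is to pass to the closure $\overline H$ of $H$ in $G$ and show that $\overline H=G$. Recall that the identity component $G^0$ is a connected one-dimensional Lie group, hence isomorphic to $(\R,+)$ or to the circle $\R/\Z$; that $G/G^0$ is finite by hypothesis; and that $G^0$ is both open and closed in $G$. We will first use the rank hypothesis to prove $\overline H\supseteq G^0$, and then pass from this to $\overline H=G$.

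Since $\operatorname{rank} H\ge 2$, choose two $\Z$-linearly independent elements of $H$; they generate a subgroup $A\le H$ with $A\cong\Z^2$. As the image of $A$ in the finite group $G/G^0$ is finite, the subgroup $A_0:=A\cap G^0$ has finite index in $A$, so $A_0$ is again free abelian of rank $2$. Thus $A_0$ is a rank-$2$ subgroup of $G^0\cong(\R,+)$ or $\R/\Z$; since every discrete subgroup of $\R$ and of $\R/\Z$ is cyclic, $A_0$ cannot be discrete, and a non-discrete subgroup of $\R$ or of $\R/\Z$ is dense in it. Hence $\overline{A_0}=G^0$ — the closure being the same whether taken in $G^0$ or in $G$, as $G^0$ is closed — and therefore $\overline H\supseteq\overline{A_0}=G^0$.

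It remains to improve $\overline H\supseteq G^0$ to $\overline H=G$. Since $\overline H$ contains the open subgroup $G^0$, it is open, hence a union of connected components of $G$; if $G$ is connected this already gives $\overline H=G$, and in general it suffices to know that $H$ meets every connected component of $G$ (equivalently, surjects onto $G/G^0$), which holds in the settings where the lemma is used. This interaction with the component group is the one delicate point; the core of the argument — that a subgroup of $\R$ or of $\R/\Z$ of rank at least $2$ is dense — is elementary and amounts to the classification of closed subgroups of $\R$ and of the circle.
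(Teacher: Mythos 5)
Your proof is correct and follows essentially the same route as the paper's: identify the connected one-dimensional Lie group $G^0$ as $(\R,+)$ or $\R/\Z$ and use that a non-cyclic (equivalently, for finitely generated subgroups, non-discrete) subgroup must be dense. You are, however, more careful than the paper at two points. First, when passing to the identity component you actually verify that $A\cap G^0$ still has rank $2$ (it has finite index in $A\cong\Z^2$); the paper simply opens with ``assume WLOG $G$ is connected'' and does not justify this reduction. Second, and more to the point, you correctly flag that the lemma as stated is false for disconnected $G$: a rank-$2$ subgroup contained in $G^0$ — for instance the subgroup of $\R^\times$ generated by $2$ and $3$ — is dense only in $G^0$, not in $G$. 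The paper's ``WLOG'' silently elides this, and the hypothesis that $H$ surjects onto $G/G^0$ really is needed. The omission is harmless in the one place the lemma is invoked, Corollary~\ref{14919.1}: there $G=\R^\times$ has two components and $H=\cO_K^\times$ always contains $-1$, so $H$ does meet both. But you were right to single this out as the delicate point, and your proof of the main density statement ($\overline H\supseteq G^0$) is complete and correct.
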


\begin{proof}
	Assume, without loss of generality, that $G$ is connected. If $G$ is non-compact then it is Lie group-isomorphic to $(\R, +)$ by an isomorphism, say, $f$. It is known that additive subgroups of $\R$ are either cyclic infinite or dense. Since $f(H)$ has rank $\geq 2$, it must be dense in $\R$ in which case $H$ is dense in $G$. If $G$ is compact, it will be isomorphic to $\Sph^1$, where a similar dichotomy holds.
\end{proof}

Lemma \ref{12919.3} together with the Dirichlet's Unit Theorem (Theorem \ref{dirichlet}) give us the following corollary. 

\begin{cor}\label{14919.1}
	Let $K$ be a totally real number field such that $[K:\Q]\geq 3$. Then $\mathcal{O}_K^\times$ is dense in $\R$.
\end{cor}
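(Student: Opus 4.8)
The plan is to deduce this from Lemma \ref{12919.3} and Theorem \ref{dirichlet}. Since $K$ is totally real, every Galois embedding of $K$ lands in $\R$; fixing the inclusion $K \subset \R$, we get $\mathcal{O}_K^\times \subset \R^\times$. Writing $n = [K:\Q]$, the totally real hypothesis means that in the notation preceding Theorem \ref{dirichlet} we have $r_1 = n$ and $r_2 = 0$, so Dirichlet's Unit Theorem gives that $\mathcal{O}_K^\times$ is finitely generated of free rank $r_1 + r_2 - 1 = n - 1$, with torsion subgroup $\{\pm 1\}$ (the only roots of unity in a totally real field). This is exactly where the hypothesis $[K:\Q] \ge 3$ enters: it forces $n - 1 \ge 2$.

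Next I would pass to the positive units $U = \mathcal{O}_K^\times \cap \R_{>0}$, which is the kernel of the sign homomorphism $\mathcal{O}_K^\times \to \{\pm 1\}$ and therefore has index at most $2$ in $\mathcal{O}_K^\times$. Being a finite-index subgroup of a finitely generated abelian group of free rank $n - 1$, the group $U$ itself has free rank $n - 1 \ge 2$; in particular it is noncyclic, i.e.\ has rank $\ge 2$ in the sense of Lemma \ref{12919.3}. Since $U$ is a subgroup of the connected $1$-dimensional Lie group $G = (\R_{>0}, \cdot)$ (isomorphic to $(\R,+)$ via $\log$), Lemma \ref{12919.3} applies and shows that $U$ is dense in $\R_{>0}$.

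To finish, note that $\mathcal{O}_K^\times \supseteq U \cup (-U)$, whose closure contains $\R_{>0} \cup \R_{<0} = \R^\times$, and $\R^\times$ is dense in $\R$; hence $\mathcal{O}_K^\times$ is dense in $\R$. (One could instead apply Lemma \ref{12919.3} directly to the two-component Lie group $\R^\times$, using that $-1 \in \mathcal{O}_K^\times$ meets the nontrivial component; restricting to $U$ first simply reduces everything to the connected case treated in the proof of that lemma.) I do not anticipate any genuine difficulty here: the whole argument is bookkeeping around Dirichlet's theorem, and the only point to watch is that passing to the positive units, or discarding torsion, does not drop the rank below $2$.
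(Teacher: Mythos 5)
Your proof is correct and follows exactly the route the paper intends: the paper gives no explicit proof, merely noting that the corollary follows from Lemma \ref{12919.3} and Dirichlet's Unit Theorem, and your argument supplies precisely the missing bookkeeping (computing the free rank $n-1\geq2$, discarding torsion, and reducing to the connected group $\R_{>0}$). Your parenthetical remark that $-1\in\mathcal{O}_K^\times$ hits the negative component is a worthwhile observation, since Lemma \ref{12919.3} as stated is slightly imprecise when $G$ is disconnected, and passing to $U=\mathcal{O}_K^\times\cap\R_{>0}$ as you do sidesteps that cleanly.
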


Next, we list some formulae in classical hyperbolic geometry that will be used later on. For a systematic treatment of the subject, see, for example, \cite{Beardon12}, \cite{Buser92} or \cite{Fenchel89}.

\begin{lemma}[{\cite[Theorems 7.11.1 and 7.17.1]{Beardon12}}] Consider a geodesic quadrilateral, known as \emph{trirectangle}, with three right angles and one acute angle $\varphi$, with side lengths indicated as in Figure \ref{8320.1}. Then the following holds:
	
	\begin{align}\label{7320.1}
    &\cos \varphi = \sinh a \, \sinh b \, ;\\
	&\cosh d = \cosh a \, \cosh b. \label{101019.6}
	\end{align}
	
Note that equation \eqref{101019.6} is but the hyperbolic Pythagoras' Theorem.
\end{lemma}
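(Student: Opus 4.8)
The plan is to give a self-contained derivation; the two identities are classical (see \cite[Theorems 7.11.1 and 7.17.1]{Beardon12}), and the natural route is to cut the trirectangle along a diagonal and reduce everything to the trigonometry of hyperbolic right triangles, which I will take as known. Label the trirectangle $ABCD$ cyclically, with the right angles at $A$, $B$, $C$ and the acute angle $\varphi$ at $D$, and set $a=|AB|$, $b=|BC|$; then $d=|AC|$ is the diagonal from $A$ to $C$.

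Identity \eqref{101019.6} is immediate: the sides $AB$ and $BC$, together with the diagonal $AC$, bound the triangle $ABC$, which has a right angle at $B$; the hyperbolic law of cosines at $B$ gives $\cosh|AC|=\cosh a\cosh b-\sinh a\sinh b\cos(\pi/2)=\cosh a\cosh b$.

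For \eqref{7320.1} I would draw the other diagonal $BD$, splitting $ABCD$ into the triangle $ABD$, right-angled at $A$ with legs $a$ and $|AD|$, and the triangle $CBD$, right-angled at $C$ with legs $b$ and $|CD|$; both have hypotenuse $BD$. Put $\beta_1=\angle ABD$, $\beta_2=\angle DBC$, so $\beta_1+\beta_2=\pi/2$, and $\delta_1=\angle ADB$, $\delta_2=\angle BDC$, so $\delta_1+\delta_2=\varphi$. Feeding the standard right-triangle relations — $\tan(\text{angle})=\tanh(\text{opposite leg})/\sinh(\text{adjacent leg})$, $\sin(\text{angle})=\sinh(\text{opposite leg})/\sinh(\text{hypotenuse})$, $\cos(\text{angle})=\tanh(\text{adjacent leg})/\tanh(\text{hypotenuse})$ — into both triangles, the constraint $\beta_2=\pi/2-\beta_1$ becomes $\tanh|AD|\,\tanh|CD|=\sinh a\,\sinh b$, while
\[\cos\varphi=\cos\delta_1\cos\delta_2-\sin\delta_1\sin\delta_2=\frac{\tanh|AD|\,\tanh|CD|}{\tanh^2|BD|}-\frac{\sinh a\,\sinh b}{\sinh^2|BD|}.\]
Substituting the first relation into the second and using $\cosh^2|BD|-1=\sinh^2|BD|$ collapses the right-hand side to $\sinh a\,\sinh b$, which is \eqref{7320.1}.

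The only delicate point is the bookkeeping: correctly reading off which leg is opposite and which is adjacent to each of $\beta_i$ and $\delta_i$ in the two right triangles, and keeping the angle-sum relations $\beta_1+\beta_2=\pi/2$ and $\delta_1+\delta_2=\varphi$ straight. If one prefers to avoid invoking the right-triangle catalogue, an alternative is to place $B$ at a convenient point of the hyperboloid model, represent the four sides by their unit spacelike normals, and read both identities off the (rank-three) Gram matrix of those normals; I would keep the diagonal-subdivision argument as the main line since it is the most transparent.
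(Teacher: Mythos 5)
Your derivation is correct, and it is worth noting that the paper itself supplies no proof of this lemma: it is stated with a bare citation to Beardon's Theorems 7.11.1 and 7.17.1, so there is no internal argument to compare against and your self-contained computation is strictly additional content. The bookkeeping you flagged does check out: in triangle $ABD$ (right angle at $A$) the angle $\beta_1$ at $B$ has opposite leg $|AD|$ and adjacent leg $a$, while $\delta_1$ at $D$ has opposite leg $a$ and adjacent leg $|AD|$; symmetrically in triangle $CBD$ (right angle at $C$). Then $\tan\beta_1\tan\beta_2=1$ gives $\tanh|AD|\tanh|CD|=\sinh a\sinh b$, and
\[
\cos\varphi=\frac{\tanh|AD|\tanh|CD|}{\tanh^2|BD|}-\frac{\sinh a\sinh b}{\sinh^2|BD|}
=\sinh a\sinh b\,\frac{\cosh^2|BD|-1}{\sinh^2|BD|}=\sinh a\sinh b,
\]
exactly as you claim. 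The Pythagoras identity \eqref{101019.6} via the diagonal $AC$ and the law of cosines at the right angle $B$ is likewise correct, and the labeling $a=|AB|$, $b=|BC|$, $d=|AC|$ is consistent with how the paper later uses the lemma (the half-turns $S_1,S_2,S_3$ at $F_1,F_2,F_3$ give $\Tr(S_1S_3)=2\cosh|F_1F_3|=2\cosh d$).
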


\begin{figure}[h]
	\centering
	\includegraphics[scale=0.1]{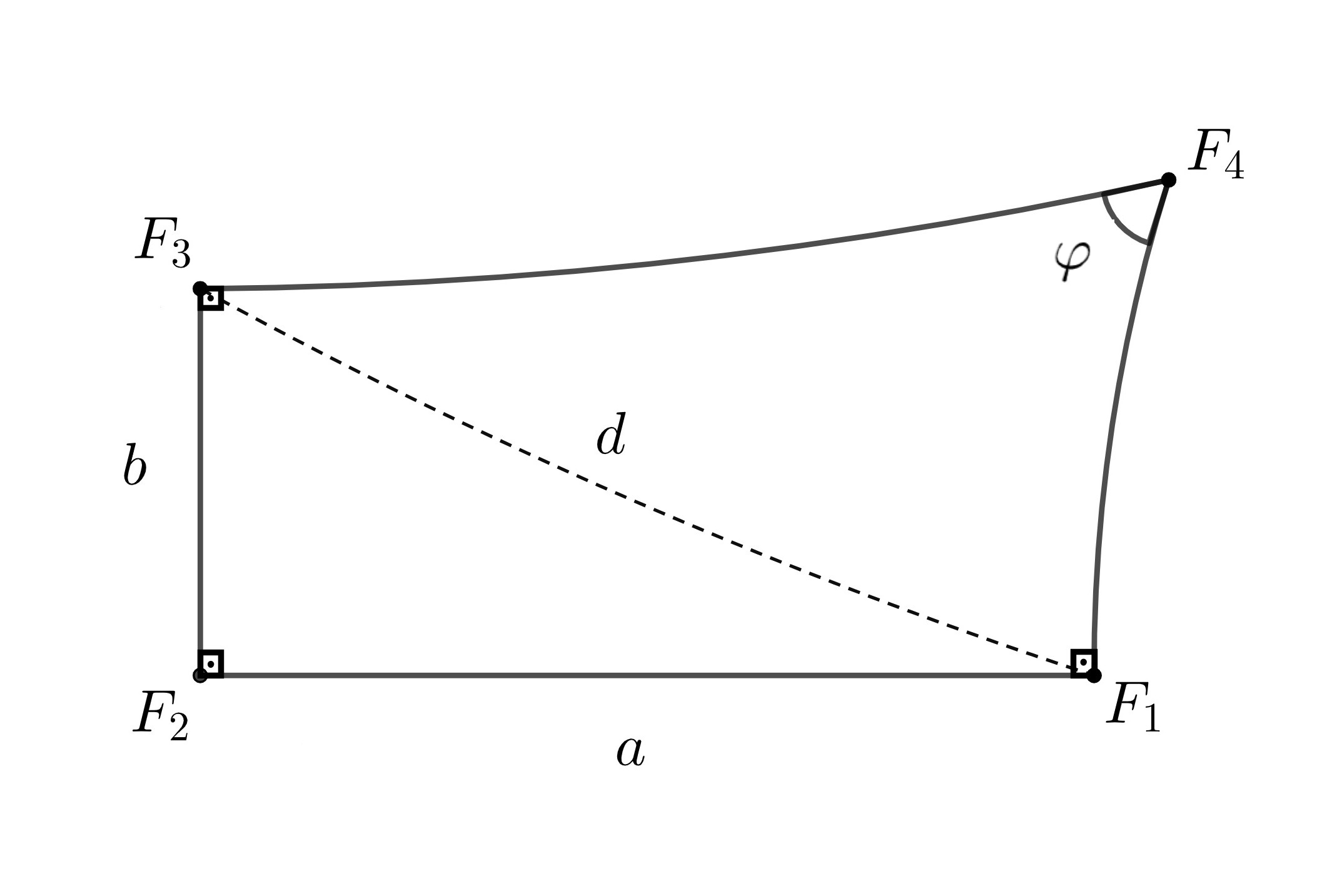}
	\caption{Trirectangle}\label{8320.1}
\end{figure}

\begin{lemma}[{\cite[Theorem 7.19.2]{Beardon12}}]\label{101019.3}
	For any three positive real numbers $a_1,a_2,a_3$, there exists a convex right-angled hexagon with three non-adjacent sides of length $a_1,a_2,a_3$. 
	
	Moreover, this hexagon is unique (up to isometry). Indeed, if the side of length $a_1$ is opposed by a side of length, say, $b_1$, then the following equation holds: 
	\begin{align*}
	\cosh b_1 \, \sinh a_2 \, \sinh a_3 = \cosh a_1 + \cosh a_2 \, \cosh a_3.
	\end{align*}
	In other words, the lengths of all sides of the hexagon are determined by the lengths of three non-adjacent sides. 
\end{lemma}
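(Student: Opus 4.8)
The plan is to prove the trigonometric identity first and then read off uniqueness and existence from it.

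\emph{The identity.} Assume a convex right-angled hexagon is given, with vertices labelled cyclically $V_1,\dots,V_6$ so that $|V_1V_2|=a_1$, $|V_3V_4|=a_2$, $|V_5V_6|=a_3$, and the opposite sides are $b_3=|V_2V_3|$, $b_1=|V_4V_5|$, $b_2=|V_6V_1|$. I would dissect the hexagon into right-angled trirectangles — for instance, by dropping the perpendicular from $V_4$ to the line through $V_1V_2$, which splits off a trirectangle containing $b_3$ and $a_2$, and then cutting the remaining right-angled region once more so that it too decomposes into trirectangles. Feeding the pieces into \eqref{7320.1} and \eqref{101019.6} yields a short system of relations among $a_1,a_2,a_3,b_1$ and the one or two auxiliary lengths introduced by the cuts; eliminating the auxiliary lengths leaves exactly
\[
\cosh b_1\,\sinh a_2\,\sinh a_3=\cosh a_1+\cosh a_2\,\cosh a_3,
\]
and, by symmetry, the analogous relations for $b_2$ and $b_3$. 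I expect the genuinely delicate point to be combinatorial rather than computational: one must check that, thanks to convexity, the feet of the perpendiculars really do land on the correct segments, so that each sub-polygon has three right angles and \eqref{7320.1}, \eqref{101019.6} apply. (Two alternatives that avoid the trirectangle bookkeeping: first cut the hexagon along the common perpendicular of a pair of opposite sides into two right-angled pentagons and invoke the pentagon relation; or compute the identity as a Gram-type determinant built from the polar vectors of the three lines carrying $a_1,a_2,a_3$ in the hyperboloid model.)

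\emph{Uniqueness.} The three cyclic forms of the identity give
\[
\cosh b_i=\frac{\cosh a_i+\cosh a_j\cosh a_k}{\sinh a_j\sinh a_k}\qquad(\{i,j,k\}=\{1,2,3\}),
\]
and the right-hand side exceeds $1$ for every choice of positive $a_1,a_2,a_3$, since $\cosh a_i>-\cosh(a_j-a_k)=\sinh a_j\sinh a_k-\cosh a_j\cosh a_k$; hence each $b_i$ is a well-defined positive real depending only on $(a_1,a_2,a_3)$. Thus every right-angled hexagon is determined, as a list of side lengths, by any three of its alternate sides. Since a convex geodesic polygon in $\Hy$ all of whose interior angles equal $\pi/2$ is determined up to isometry by its cyclic sequence of side lengths — develop it edge by edge, a $\pi/2$ turn being forced at each vertex — the hexagon is unique up to isometry.

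\emph{Existence.} Given $a_1,a_2,a_3>0$, define $b_1,b_2,b_3$ by the formula above and develop five consecutive sides in $\Hy$, say a path $V_1\to V_2\to V_3\to V_4\to V_5\to V_6$ of lengths $a_1,b_3,a_2,b_1,a_3$, turning by $\pi/2$ at each interior vertex and always bending the same way. Closing the hexagon amounts to showing that the isometry obtained by composing the six ``advance-then-turn-$\pi/2$'' maps is the identity; by the very form of the identities this is an algebraic consequence of the three relations the $b_i$ were chosen to satisfy, and can be checked by a direct computation in $\PSLTR$. (Cleaner, perhaps, is a continuity argument: the set of $(a_1,a_2,a_3)\in(0,\infty)^3$ realised by right-angled hexagons is nonempty — e.g. the regular symmetric hexagon, or a half of a symmetric hyperbolic pair of pants — open by the identity of Step 1, and closed because the $b_i$ vary continuously and a bounded family of such hexagons subconverges; since $(0,\infty)^3$ is connected, it is everything.) Finally, convexity and simplicity of the resulting hexagon are automatic, all interior angles being $\pi/2$. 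The main obstacle throughout is Step 1 — arranging the dissection so that the pieces are honest trirectangles and the elimination goes through cleanly; everything afterward is essentially formal.
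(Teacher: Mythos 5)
The paper gives no argument for this lemma; it simply cites Beardon's Theorem 7.19.2. So there is no ``paper proof'' to compare against; what you have written is a from-scratch sketch of a result that the authors are content to import as a black box.

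Your plan is the standard one and the overall structure is sound: derive the cyclic trigonometric identity first, read uniqueness off it, and build existence afterward. The derivation of the identity by cutting the hexagon into trirectangles (or, as you note, into two right-angled pentagons via a common perpendicular) is exactly how this is done in the textbooks, and you correctly flag that the only real work there is checking that the feet of the perpendiculars land inside the right edges, which convexity guarantees. The uniqueness paragraph is clean: the formula pins down $\cosh b_i$ (your inequality $\cosh a_i > -\cosh(a_j-a_k)$ is correct and shows the right-hand side exceeds $1$), and a convex right-angled polygon is indeed determined up to isometry by its cyclic sequence of side lengths, since the development is rigid once all turning angles are $\pi/2$.

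The one genuine gap is in the existence step, and it is a bit larger than you allow when you say ``everything afterward is essentially formal.'' In your continuity argument, the claim that the set $U\subset(0,\infty)^3$ of realizable triples is \emph{open} ``by the identity of Step 1'' does not follow: the identity only tells you what the $b_i$ must be \emph{if} a hexagon exists; it does not by itself produce a nearby hexagon for a nearby triple. Openness (or, more directly, surjectivity of $U\to(0,\infty)^3$) needs an actual construction or a monotonicity/continuity argument, for instance: fix $a_2,a_3$, drop perpendiculars at both ends of the $a_2$-side, move a distance $b_1$ along one of them, erect a perpendicular, go $a_3$, erect another perpendicular, and show that beyond a threshold value $b_1^\ast$ (where the two boundary lines pass from intersecting to ultraparallel) the length $a_1$ of the common perpendicular is a continuous, strictly increasing function of $b_1$ running over all of $(0,\infty)$. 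This threshold analysis is precisely the substance Beardon supplies. Your alternative ``check the closing relation in $\PSLTR$ directly'' would also work, but it too has to be carried out; it is not an automatic consequence of the three scalar identities, because a priori the closing condition is a full $\PSLTR$-identity (three real constraints plus a sign), and one must verify that the three $b_i$-relations imply it. So the sketch is correct in spirit, but the existence half needs the extra monotonicity or an explicit matrix computation to be complete.
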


Finally, we recall the following relation between the displacement of a hyperbolic element and its trace.

\begin{prop}\label{31120.1}
	Let $\gamma \in \PSLTR$ be a hyperbolic element with translation length $\ell(\gamma)$. Then the following relation holds:
	
	\begin{align*}
	\Tr\gamma = 2\cosh \frac{\ell(\gamma)}{2}.
	\end{align*} 
\end{prop}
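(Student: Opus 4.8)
The plan is to reduce to the diagonal normal form of a hyperbolic isometry and compute directly. First I would recall that a hyperbolic element $\gamma\in\PSLTR$ has exactly two fixed points on $\partial\Hy = \R\cup\{\infty\}$, and by conjugating by a suitable element of $\PSLTR$ we may assume these are $0$ and $\infty$. Any element of $\PSLTR$ fixing both $0$ and $\infty$ is represented by a diagonal matrix, so there is $\lambda\in\R$, which we may take with $\lambda>1$, such that $\gamma$ is conjugate to the class of $\begin{pmatrix}\lambda & 0 \\ 0 & \lambda^{-1}\end{pmatrix}$. Since $\Tr$ is invariant under conjugation (being defined via $|\tr|$ of a lift, and $\tr$ is a conjugation invariant of $\SLTR$) and the translation length $\ell$ is likewise a conjugation invariant, it suffices to prove the identity for this diagonal representative.

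Next I would identify the axis and compute the displacement along it. The diagonal element acts on $\Hy$ by the M\"obius transformation $z\mapsto\lambda^2 z$, whose invariant geodesic line is the imaginary axis $\{iy : y>0\}$; as noted in \S\ref{group}, the minimum defining $\ell(\gamma)$ is attained at any point of this axis. Taking the point $i$, we compute
\[
\ell(\gamma) = d_\Hy(i,\lambda^2 i) = \int_1^{\lambda^2}\frac{dy}{y} = \log(\lambda^2) = 2\log\lambda,
\]
using that the hyperbolic length element restricted to the imaginary axis is $dy/y$. Hence $\lambda = e^{\ell(\gamma)/2}$.

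Finally, the trace of the diagonal representative is
\[
\Tr\gamma = |\lambda + \lambda^{-1}| = \lambda + \lambda^{-1} = e^{\ell(\gamma)/2} + e^{-\ell(\gamma)/2} = 2\cosh\frac{\ell(\gamma)}{2},
\]
which is the claimed formula, and by the conjugation-invariance noted above it holds for the original $\gamma$. There is no real obstacle here: the only points that deserve a word are the existence of the diagonal normal form (two distinct boundary fixed points, conjugated to $0,\infty$) and the evaluation of the hyperbolic distance along the imaginary axis, both of which are standard.
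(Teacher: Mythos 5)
Your proof is correct. The paper states Proposition \ref{31120.1} without proof, treating it as a standard fact from hyperbolic geometry (in the spirit of the neighbouring citations to Beardon), so there is no paper argument to compare against. Your reduction to the diagonal normal form $\begin{pmatrix}\lambda & 0\\ 0 & \lambda^{-1}\end{pmatrix}$ with $\lambda>1$, the computation $\ell(\gamma)=2\log\lambda$ along the imaginary axis, and the observation that both $\Tr$ and $\ell$ are conjugation invariants are exactly the standard route, and every step is sound.
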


\begin{cor}[cf. {\cite[Theorem 7.38.2 ]{Beardon12}}] \label{101019.4}
	If $A,B\in\PSLTR$ are half-turns around $p_1$ and $p_2$, then
	\begin{align*}
	\Tr AB = 2\cosh d_\Hy(p_1,p_2),
	\end{align*}
	where $d_\Hy$ is the hyperbolic distance.
\end{cor}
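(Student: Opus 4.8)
The plan is to reduce the identity to Proposition~\ref{31120.1} by computing the translation length of $AB$. If $p_1=p_2$ then $AB$ is the identity and both sides equal $2$, so I would assume $p_1\neq p_2$ and let $L$ denote the unique geodesic through $p_1$ and $p_2$. The one geometric fact to establish is that $AB$ is hyperbolic with axis $L$ and $\ell(AB)=2\,d_\Hy(p_1,p_2)$; once this is in hand, Proposition~\ref{31120.1} yields $\Tr AB=2\cosh\frac{\ell(AB)}{2}=2\cosh d_\Hy(p_1,p_2)$ at once.

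To establish the claim about $\ell(AB)$ I would decompose each half-turn into reflections. Write $\sigma_L\in\PGLTR$ for the reflection in $L$, let $L_i$ be the geodesic through $p_i$ orthogonal to $L$, and let $\sigma_{L_i}$ be the reflection in $L_i$. Since the product of the reflections in two orthogonal geodesics through a common point is the half-turn about that point, one has $A=\sigma_{L_1}\sigma_L$ and $B=\sigma_L\sigma_{L_2}$, hence $AB=\sigma_{L_1}\sigma_{L_2}$. Because $L_1$ and $L_2$ are disjoint geodesics with common perpendicular $L$, their product $\sigma_{L_1}\sigma_{L_2}$ is the hyperbolic translation along $L$ whose length is twice the distance between $L_1$ and $L_2$; that distance, measured along $L$, is precisely $d_\Hy(p_1,p_2)$, giving $\ell(AB)=2\,d_\Hy(p_1,p_2)$.

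A shorter and fully self-contained alternative avoids reflection groups altogether. Since $\Tr$ and $d_\Hy$ are invariant under conjugation by $\PSLTR$, I would normalise so that $L$ is the imaginary axis, $p_1=i$ and $p_2=e^{t}i$ with $t=d_\Hy(p_1,p_2)\ge 0$. Then $A$ and $B$ are represented in $\SLTR$ by $\begin{pmatrix}0&1\\-1&0\end{pmatrix}$ and $\begin{pmatrix}0&e^{t}\\-e^{-t}&0\end{pmatrix}$, so $AB$ is represented by $\begin{pmatrix}-e^{-t}&0\\0&-e^{t}\end{pmatrix}$, whose trace has absolute value $e^{t}+e^{-t}=2\cosh t$. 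There is no genuine obstacle here; the only step needing a little care is the classical fact that the composition of two point-reflections is a translation by twice the distance between the centres (or, in the computational route, tracking the conjugating element correctly). I would likely present the matrix computation, since it is elementary and self-contained — in fact it even reproves Proposition~\ref{31120.1} in this special case, as the normalised $AB$ is the hyperbolic element $z\mapsto e^{-2t}z$ with axis $L$ and translation length $2t$.
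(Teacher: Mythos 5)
Your proof is correct. The paper in fact supplies no proof of its own for this corollary --- it simply cites Beardon (Theorem 7.38.2) and positions the statement as a consequence of Proposition~\ref{31120.1} --- so there is no argument in the paper to compare against directly. Both of your routes are sound: the decomposition $A=\sigma_{L_1}\sigma_L$, $B=\sigma_L\sigma_{L_2}$ into reflections across the common perpendicular $L$ and the two orthogonals $L_1,L_2$ correctly identifies $AB$ as the hyperbolic translation along $L$ with $\ell(AB)=2\,d_\Hy(p_1,p_2)$, after which Proposition~\ref{31120.1} closes the argument; and the normalised matrix computation with $p_1=i$, $p_2=e^t i$ gives
\[
AB=\begin{pmatrix}0&1\\-1&0\end{pmatrix}\begin{pmatrix}0&e^{t}\\-e^{-t}&0\end{pmatrix}=\begin{pmatrix}-e^{-t}&0\\0&-e^{t}\end{pmatrix},
\]
so $\Tr AB = e^t+e^{-t}=2\cosh t$ with $t=d_\Hy(p_1,p_2)$, as required. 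The second route is the cleaner choice for a self-contained write-up, since it avoids appealing to the classification of products of reflections and indeed re-derives the special case of Proposition~\ref{31120.1} that is needed. One small bookkeeping remark: in the degenerate case $p_1=p_2$ the product $AB$ is the identity in $\PSLTR$, and the convention $\Tr(\mathrm{Id})=2$ makes the identity hold there too, as you note.
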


Consider the abstract group 
\begin{align}\label{8320.2}
\Lambda  =	\langle s_1,\dots, s_4 \mid s_1^2 = s_2^2 = s_3^2 = s_4^3 = s_1\cdots s_4 = 1\rangle.
\end{align}

Let $\mathcal{Q}$ be a trirectangle with acute angle measuring $\pi/3$ and with vertices labeled $F_1,\dots,F_4,$ as shown in Figure \ref{8320.1}. Let $\sigma_1,\sigma_2, \sigma_3, \sigma_4$ denote the reflections across the geodesic lines supporting, respectively, the sides $F_1F_2, \ F_2F_3, \ F_3F_4, \ F_4F_1$, and let $\mathrm{R}(\mathcal{Q})$ be the group of isometries of $\Hy$ generated by these reflections. The index $2$ subgroup of orientation-preserving isometries is known to be a Fuchsian group with presentation given by \eqref{8320.2}. Moreover, the product of any two reflections across adjacent sides of $\cQ$ gives an elliptic element in $\PSLTR$ fixing the intersection point between the respective sides. In particular, the isometries defined by $S_j = \sigma_j\sigma_{j+1}$ for $j=1,2,3,4$ (where indices are taken modulo 4) 
are elliptic isometries fixing the vertex $F_j$, and the map $\rho_{\mathcal{Q}}:\Lambda \rightarrow \PSLTR$ given by $\rho_{\mathcal{Q}}(s_i)=S_i$, $i=1,2,3,4$, induces an injective homomorphism onto the Fuchsian group in question.

Consider the right-angled hexagon $\cH$ that is tiled by six copies of one such trirectangle, as in Figure \ref{8320.3}. One may verify that the isometries
\begin{align*}
	\sigma_1 \, , \, \sigma_2 \, , \, \sigma_3\sigma_1\sigma_3 \, , \, (\sigma_4\sigma_3\sigma_4)\sigma_2(\sigma_4\sigma_3\sigma_4) \, , \, \sigma_4(\sigma_3\sigma_1\sigma_3)\sigma_4 \, , \, \sigma_4\sigma_2\sigma_4
\end{align*}
are reflections across the geodesic lines supporting the respective sides of the hexagon. It follows that the group $\mathrm{R}(\cH)$, generated by reflections across the sides of $\cH$, is a subgroup of $\mathrm{R}(\cQ)$. Let $C_j$ denote the half-turn around the vertex $E_j$ of $\cH$, i.e., the product of the reflections across the sides of $\cH$ intersecting at $E_j$. Let $\Gamma$ be the abstract group with presentation
\begin{align*}
\Gamma = \langle c_1,\dots, c_6 \mid c_1^2 = \dots = c_6^2 = c_1\cdots c_6 = 1\rangle.
\end{align*}
In the same spirit as before, we define a map $\rho_{\cH}$ taking $c_j$ to $C_j$ and obtain an injective representation of $\Gamma$ into $\PSLTR$ such that 
\begin{align}\label{8320.4}
\rho_{\cH}(\Gamma) < \rho_{\cQ}(\Lambda).
\end{align}
Furthermore, we observe that $\rho_{\cH}(\Gamma)$ has index $6$ in $\rho_{\cQ}(\Lambda)$.

\begin{figure}[h]
	\centering
	\includegraphics[scale=0.4]{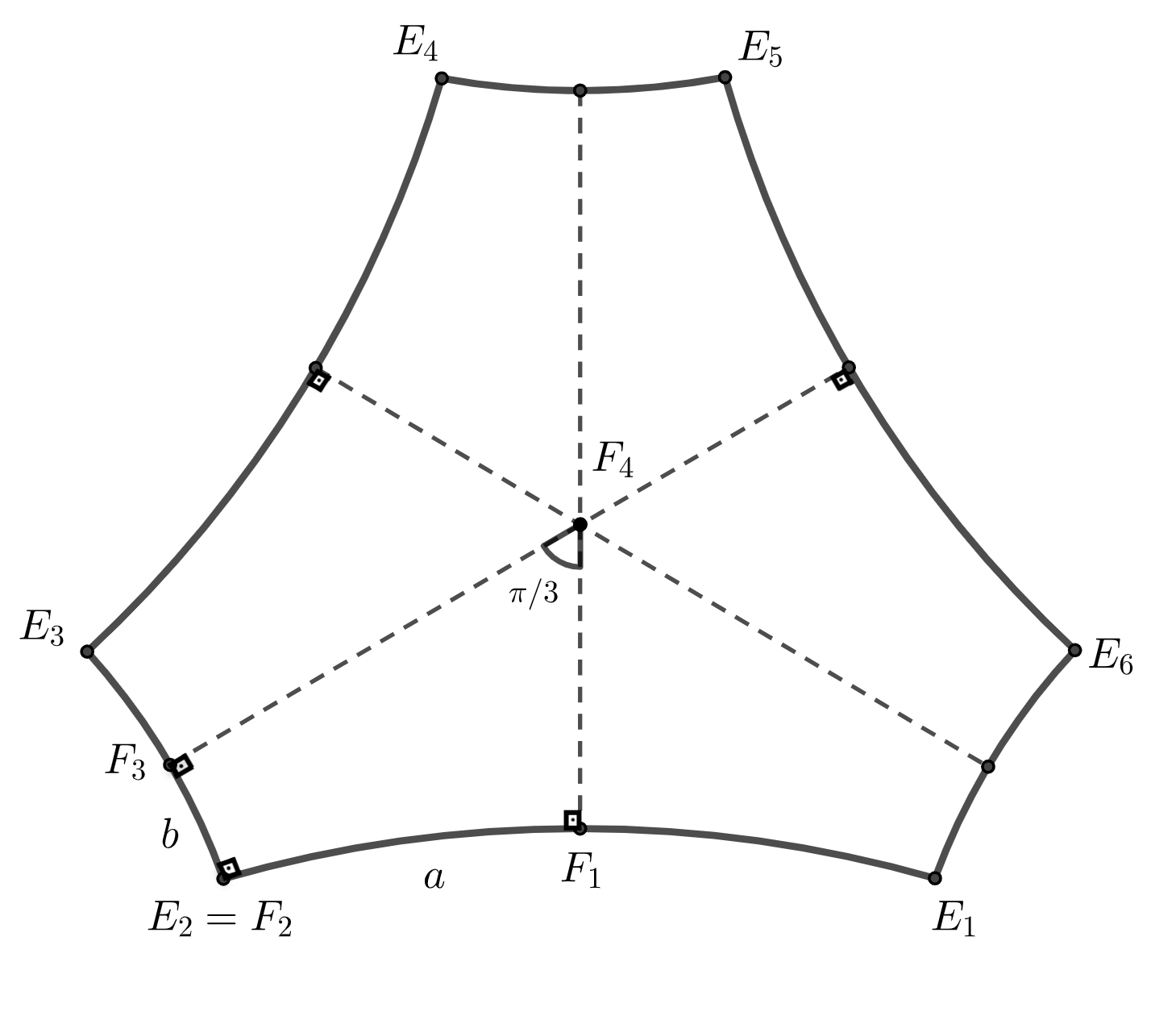}
	\caption{Right-angled hexagon} \label{8320.3}
\end{figure}

\begin{definition}
	We say that a real number $t\geq0$ is \emph{realised} by the Fuchsian group $\Gamma$ if there exists some element $\gamma \in \Gamma$ such that $\Tr\gamma = t$. Similarly, we say that $l>0$ is realised by the hyperbolic surface $S$ if there exists some closed geodesic in $S$ of length $l$. Note that $S=\Hy/\Gamma$ realises $l>0$ if and only if $\Gamma$ realises $2\cosh(l/2) >2$. Indeed, recall that a closed geodesic in $S$ is the projection of the axis of a hyperbolic element of $\Gamma$ with translation length equal to the length this geodesic, then use Proposition \ref{31120.1}.
\end{definition}

\begin{theorem}\label{251019.1}
	The set $\cT$, of all real numbers that are realised by a semi-arithmetic Fuchsian group of signature $(0;2,2,2,2,2,2)$, is dense in the interval $[2,+\infty)$.
\end{theorem}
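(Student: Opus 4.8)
The plan is to use the embedding $\rho_{\cH}(\Gamma) < \rho_{\cQ}(\Lambda) < \mathrm{R}(\cQ)$ constructed above, together with the freedom to vary the geometry of the trirectangle $\cQ$, to exhibit, for a dense set of target numbers $t \in [2,+\infty)$, a semi-arithmetic surface of signature $(0;2,2,2,2,2,2)$ realising $t$. First I would single out a distinguished hyperbolic element $\gamma_0 \in \Gamma$ (written as an explicit word in the $c_j$, equivalently as a word in the reflections $\sigma_i$ via \eqref{8320.4}) whose image under $\rho_{\cH}$ is a product of two half-turns around two vertices of $\cH$; by Corollary \ref{101019.4}, $\Tr\rho_{\cH}(\gamma_0) = 2\cosh d_\Hy(p_1,p_2)$, where $p_1,p_2$ are those two vertices. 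The point of choosing such a $\gamma_0$ is that the distance $d_\Hy(p_1,p_2)$ can be computed, via Lemma \ref{101019.3} and the trirectangle formulae \eqref{7320.1}, \eqref{101019.6}, as an explicit function of the side lengths of $\cQ$ — and the trirectangle has one remaining real degree of freedom once the acute angle is fixed at $\pi/3$ (namely the length $a$, with $b$ then determined by $\cos(\pi/3) = \sinh a\,\sinh b$).

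Next I would pin down exactly which values of $t$ arise. As $a$ ranges over an interval, $\Tr\rho_{\cH}(\gamma_0)$ traces out an interval of real numbers; the surface $\rho_{\cH}(\Gamma)\backslash\Hy$ is semi-arithmetic precisely when its invariant trace field is totally real and all traces of $\rho_{\cH}(\Gamma)$ are algebraic integers. By Proposition \ref{9320.1}, the whole trace set of $\rho_{\cH}(\Gamma)$ lies in the ring generated by the pairwise-product traces of a generating set, so it suffices to arrange that finitely many specific traces are algebraic integers lying in a totally real field. Using the explicit hexagon/trirectangle formulae, each of these traces is an explicit algebraic expression (involving $\cosh$, $\sinh$ of multiples of $a$) in the single parameter $\cosh a$ (or a closely related quantity). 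The key move is: choose $\cosh a$ — or whichever single real parameter controls the geometry — to be of the form determined by a totally positive algebraic integer, so that Lemma \ref{10320.1} guarantees the relevant square roots stay totally real and integral, and then invoke Corollary \ref{14919.1} (density of $\rO_K^\times$ in $\R$, for totally real $K$ of degree $\ge 3$) to get a dense set of admissible parameter values. Feeding such parameters into the formula for $\Tr\rho_{\cH}(\gamma_0)$ and using that the map parameter $\mapsto$ trace is continuous and (on a suitable range) a nonconstant real-analytic function, the image is dense in $[2,+\infty)$: continuity spreads a dense set of inputs to a dense set of outputs on any interval where the map is locally non-constant, and one checks the map is surjective onto an interval of the form $[2,\infty)$ or exhausts $[2,\infty)$ as $a\to\infty$.

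The main obstacle I anticipate is the bookkeeping in the second step: one must verify that for the chosen parameter values \emph{all} generator and pairwise-product traces of $\rho_{\cH}(\Gamma)$ (not just $\Tr\rho_{\cH}(\gamma_0)$) are algebraic integers and that the field they generate is totally real. This requires writing the half-turns $C_1,\dots,C_6$ explicitly as words in $\sigma_1,\dots,\sigma_4$, computing their traces and pairwise-product traces via the hexagon decomposition, and recognising the common algebraic structure — presumably everything lives in $\Q(\cosh a)$ or a quadratic extension thereof, and the semi-arithmeticity reduces to: (i) $\cosh a$ is chosen a totally positive algebraic integer (or half such), and (ii) a uniform totally-real condition, handled by Lemma \ref{10320.1}. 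A secondary, more routine point is checking that varying $a$ within the relevant range keeps the hexagon convex and right-angled (so Lemma \ref{101019.3} applies and the surface genuinely has signature $(0;2,2,2,2,2,2)$) and that $\gamma_0$ remains hyperbolic with its trace tending to $+\infty$, which ensures the output interval reaches all of $[2,+\infty)$.
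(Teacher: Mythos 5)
Your proposal takes essentially the same route as the paper: work in the reflection group of a trirectangle $\cQ_a$ with fixed acute angle $\pi/3$, pass to the index-six hexagon group $\rho_{\cH_{2a}}(\Gamma)$ of signature $(0;2,2,2,2,2,2)$, reduce semi-arithmeticity to the integrality and total reality of finitely many half-turn-product traces via Proposition \ref{9320.1} and Corollary \ref{101019.4}, enforce that by a number-theoretic choice of $a$ using Lemma \ref{10320.1} and Corollary \ref{14919.1}, and then observe that the realised trace (the paper uses $\Tr\rho_{\cH_{2a}}(c_1c_2)=2\cosh 2a$) sweeps out a dense subset of $[2,+\infty)$ as $a$ varies.

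One point you leave vague is exactly where the \emph{unit} condition (as opposed to mere integrality) enters, and you place it on the wrong parameter. You propose to take $\cosh a$ to be ``of the form determined by a totally positive algebraic integer'' and then invoke density of $\cO_K^\times$; but the paper's choice is $\sinh a \in \cO_K^\times$, and the unit hypothesis is not a convenience --- it is essential. Equation \eqref{7320.1} with $\varphi=\pi/3$ gives $\sinh b = (2\sinh a)^{-1}$, so the side trace $2\cosh b = \sqrt{(\sinh a)^{-2}+4}$ (and hence $2\cosh d = \cosh a\cdot 2\cosh b$) is an algebraic integer precisely because $(\sinh a)^{-1}$ is. If you only impose that $\sinh a$ (or $\cosh a$) is a totally positive algebraic integer, the reciprocal $(\sinh a)^{-1}$ need not be integral, the trace set of $\rho_{\cQ_a}(\Lambda)$ escapes $\cO_{\overline{\Q}}$, and semi-arithmeticity fails. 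So the ``whichever parameter controls the geometry'' hedge should be resolved to $\sinh a \in \cO_K^\times$; once that is fixed, the rest of your outline matches the paper's argument, with the final density coming directly from $t = 2\cosh 2a = 2+4\sinh^2 a$ and Corollary \ref{14919.1} rather than from a separate continuity argument.
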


\begin{proof}
	Consider a trirectangle $\mathcal{Q}_a$ with angle $\pi/3$ at the vertex $F_4$, and side $F_1F_2$ of length $a$, as shown in Figure \ref{8320.1}. Let $\cH_{2a}$ denote the corresponding right-angled hexagon tiled by $\cQ_a$, as constructed above, and note that the side $E_1E_2$ has length $2a$.
	
	By Theorem 2.3.1 in \cite{Buser92}, the length $a$ determines the other three sides of the trirectangle. Note that, by an argument of continuity, $a$ can be arbitrarily chosen while keeping the acute angle with a fixed measure of $\pi/3$. We will select this length in such a way that the resulting group $\rho_{\mathcal{Q}_a}(\Lambda) < \PSLTR$ will be semi-arithmetic and, as a consequence, so will be the finite index subgroup $\rho_{\cH_{2a}}(\Gamma)$.
	
	Let $K$ be any totally real number field with $[K:\Q]\geq 3$ and let $V = \{v\in \R^+ \mid \sinh v \in \rO_K^\times \}$. So, in particular, for every $v\in V$, $\sinh v$ is a totally real unit in the ring of integers of $K$. It follows from Corollary \ref{14919.1} that $V$ is dense in $\left[0,\infty\right)$. Choose $a \in V$.
	
	We first observe that $\cosh^2 a = \sinh^2 a +1$ is a totally positive algebraic integer. It then follows that $\cosh a$ is a totally real algebraic integer (see Lemma \ref{10320.1}).
	
	Equation \eqref{7320.1} gives that $\sinh b = \frac{(\sinh a)\inv}{2}$ and then
	\begin{align*}
	\cosh^2b= \sinh^2 b + 1 = \frac{(\sinh a)^{-2} + 4}{4}.
	\end{align*}
	By the same reasoning as before, we obtain $2\cosh b = \sqrt{(\sinh a)^{-2} + 4}$ is a totally real algebraic integer.
	Now, by the Pythagoras' Theorem, we have that
	\begin{align*}
	2 \cosh d = \cosh a (2\cosh b)
	\end{align*}
	is also a totally real algebraic integer 
	
	Thus, by Corollary \ref{101019.4}, the traces $\Tr(S_1S_2), \Tr(S_2S_3),  \Tr(S_1S_3)$ are all totally real algebraic integers. Note also that the order 2 elements $S_1,S_2,S_3$ have trace 0, and that the order three element $S_4$ has trace 1 so, in view of Proposition \ref{9320.1} and \eqref{8320.4}:
	
	\begin{align*}
	\Tr(\rho_{\cH_{2a}}(\Gamma)) \subset \Tr(\rho_{\mathcal{Q}_a}(\Lambda)) \subset \Z[\Tr(S_1S_2), \Tr(S_2S_3),  \Tr(S_1S_3)].
	\end{align*}
	
	In particular, the invariant trace-field of $\rho_{\mathcal{H}_{2a}}(\Gamma)$ is a subfield of
	\begin{align*}
	\Q(\Tr(S_1S_2), \Tr(S_2S_3),  \Tr(S_1S_3)),
	\end{align*}
	which is, by construction, totally real. We therefore conclude that the Fuchsian group $\rho_{\mathcal{H}_{2a}}(\Gamma)$ is semi-arithmetic. Since $a$ was arbitrarily chosen among a dense subset $V$ of $[0,+\infty)$, the group $\rho_{\mathcal{H}_{2a}}(\Gamma)$ realises $t=2\cosh 2a = 2 + 4\sinh^2a$ as $\Tr\rho_{\cH_{2a}}(c_1c_2)$ for any $t = t(a)$ in $\cT \defeq 2\cosh 2V$, the latter being dense in $[2,+\infty)$.
\end{proof}

A \emph{surface-kernel epimorphism} is an epimorphism $\theta\! : \Gamma \to G$ with torsion-free kernel. This is the case, for example, if $\theta$ preserves the order of the torsion elements of $\Gamma$. Note that it is sufficient to check if the orders of the generators in \eqref{201019.1} are preserved.

\begin{theorem} \label{mainthm}
	Let $\mathcal{L}_g$ be the set of real numbers that are realised by a semi-arithmetic surface of genus $g$. Then $\displaystyle\bigcap_{g\geq 2} \mathcal{L}_g$ is dense in $[0,+\infty)$. 
\end{theorem}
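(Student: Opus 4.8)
The plan is to upgrade Theorem \ref{251019.1} from the single signature $(0;2,2,2,2,2,2)$ to every genus $g \ge 2$ by means of surface-kernel epimorphisms, while carrying along the distinguished element $c_1c_2$ whose trace was shown to be a free parameter. Concretely, for a number $t = 2\cosh 2a \in \cT$ we have the semi-arithmetic group $\rho_{\cH_{2a}}(\Gamma)$, where $\Gamma$ has signature $(0;2,2,2,2,2,2)$, and the hyperbolic element $\rho_{\cH_{2a}}(c_1c_2)$ has trace $t$. The idea is: first find, for each $g \ge 2$, a fixed finite-index torsion-free normal subgroup $\Gamma_g \le \Gamma$ whose quotient surface has genus $g$ and which \emph{contains} the element $c_1c_2$ (or a fixed power of it independent of $a$). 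Then $\rho_{\cH_{2a}}(\Gamma_g)$ is a semi-arithmetic surface group of genus $g$ (semi-arithmeticity is inherited by finite-index subgroups, as noted in the excerpt), and it realises the translation length of $\rho_{\cH_{2a}}(c_1c_2)$, hence the number $t$. Since $\cT$ is dense in $[2,+\infty)$ and the corresponding lengths $2\operatorname{arccosh}(t/2)$ sweep out a dense subset of $[0,+\infty)$, we get $\bigcap_{g\ge2}\mathcal{L}_g \supseteq$ a dense set.

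The key construction is therefore purely group-theoretic and independent of $a$: I need a surface-kernel epimorphism $\theta_g : \Gamma \to G_g$ onto a finite group such that (i) $\theta_g$ preserves the order ($=2$) of each generator $c_i$, so $\ker\theta_g$ is torsion-free and $\Gamma/\ker\theta_g$ acts on $\Hy$ with quotient a closed orientable surface, (ii) the genus of that surface is exactly $g$ — computable via Riemann–Hurwitz from $|G_g|$ and the branching data, and (iii) $c_1 c_2 \in \ker\theta_g$, i.e. $\theta_g(c_1) = \theta_g(c_2)$. One clean way: since $c_1c_2$ is a single element, map $\Gamma$ onto a quotient in which $c_1$ and $c_2$ have the same image; for instance take $G_g$ to be a suitable group with an element of order $2$ and send $c_1, c_2$ to the same involution, $c_3,\dots,c_6$ to involutions so that the product relation $c_1\cdots c_6 = 1$ holds, choosing $|G_g|$ large enough and the images generating, so that Riemann–Hurwitz yields genus $g$. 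The area of $\cH$ gives $\area(\Gamma\backslash\Hy) = 2\pi(6\cdot\frac12 - 2) = 2\pi$, so $\area(\ker\theta_g\backslash\Hy) = 2\pi|G_g|$ and $2g-2 = |G_g|$; one only needs a surface-kernel epimorphism onto a group of order $2g-2$ (for $g\ge2$) with the extra constraint $\theta_g(c_1)=\theta_g(c_2)$, which for the $6$-involution presentation of $\Gamma$ is easy to arrange (e.g. in a dihedral or elementary-abelian-$2$ group of the right order, padding with trivial images among $c_3,\dots,c_6$ and adjusting). Alternatively one passes first to any genus-$2$ semi-arithmetic surface containing $c_1c_2$, then takes its own congruence or characteristic covers to reach every larger genus simultaneously, using that the density set $\cT$ is realised already at genus $2$.

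The main obstacle is ensuring that the \emph{same} fixed element $c_1c_2$ survives into the kernel for a genus-$g$ cover \emph{for all $g$ at once}, which is what the intersection $\bigcap_g \mathcal{L}_g$ demands; a naive choice of $\theta_g$ gives a different subgroup for each $g$, but as long as each contains $c_1c_2$ this is fine, because the realised number $t$ depends only on the trace of $\rho_{\cH_{2a}}(c_1c_2)$, which is the same in every overgroup. The second point requiring care is the exact genus computation via Riemann–Hurwitz: one must verify the chosen epimorphism is genuinely surface-kernel (torsion-free kernel), for which it suffices, as recalled in the excerpt, that $\theta_g$ preserves the orders of the torsion generators $c_1,\dots,c_6$, i.e. that each $\theta_g(c_i)$ has order exactly $2$ — in particular is nontrivial. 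So I must not accidentally kill any $c_i$ other than through the combination $c_1c_2$; sending $\theta_g(c_1)=\theta_g(c_2)=$ an involution $\neq 1$ is consistent with this. After that, the density of $\cT$ in $[2,+\infty)$ (Theorem \ref{251019.1}), together with the monotone continuous change of variables $t\mapsto 2\operatorname{arccosh}(t/2)$ from $[2,\infty)$ onto $[0,\infty)$, finishes the proof.
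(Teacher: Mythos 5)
Your proposal is correct and takes essentially the same approach as the paper: pass from the $(0;2,2,2,2,2,2)$ group $\Gamma$ to a torsion-free finite-index normal subgroup via a surface-kernel epimorphism that kills $c_1c_2$, so the length realised at signature $(0;2^6)$ is inherited by a genus-$g$ semi-arithmetic surface. The one point where you diverge is the mechanism for producing the genus-$g$ kernel: you propose a direct epimorphism $\Gamma \to G_g$ onto a group of order $2g-2$ with $\theta_g(c_1)=\theta_g(c_2)$ (dihedral of order $2(g-1)$ works uniformly for $g\geq 3$, and $\Z/2$ for $g=2$), whereas the paper first passes to the index-$2$ kernel $\mathcal{K}$ (genus $2$) by mapping all $c_i\mapsto\overline 1$ in $\Z/2$, then uses the Reidemeister process to exhibit $\mathcal{K}$ as $\langle x,y,x',y'\mid [x,y][x',y']\rangle$ with $y=c_1c_2$, and finally takes cyclic covers $\ker(\eta_n\colon\mathcal{K}\to\Z/n)$ of genus $n+1$. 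Both routes are valid; the paper's choice to go through the explicit Reidemeister presentation pays off later, since that presentation of $\mathcal{K}$ (and the identification $y = c_1c_2$) is reused in the proof of Theorem~\ref{cor:densityofsys}. One small caution about your write-up: the parenthetical suggestion to ``pad with trivial images among $c_3,\dots,c_6$'' would destroy the surface-kernel property (each $\theta_g(c_i)$ must have order exactly $2$), but you correctly retract this a few lines later.
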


\begin{proof}
Let $\cT$ be the set obtained in Theorem \ref{251019.1}. For each $t \in \cT$ define $\Gamma_t=\rho_t(\Gamma):=\rho_{\mathcal{H}_{2a}}(\Gamma)$ where $a$ is such that $2\cosh 2a = t$, as in the construction of the previous theorem. From what it was proved, it follows that the Fuchsian group $\Gamma_t$ is a semi-arithmetic group of signature  $(0;2,2,2,2,2,2)$ with $\Tr \rho_t(c_1c_2) = t.$\\

\textit{Case 1}: $g=2$

Let $\theta\!:\Gamma \to \Z/2\Z$ be a homomorphism defined by $c_i \mapsto \overline{1}$, $i=1,\dots,6$. It is immediate that $\theta$ is a surface-kernel epimorphism, since it preserves the order of the generators of $\Gamma$.  It follows that $\mathcal{K}:=\ker\theta$ is a torsion-free index 2 subgroup of $\Gamma$ and, as such, it must be a surface group of genus $g$. From \eqref{241019.1} and the Riemann-Hurwitz formula, we compute that
\begin{align*}
2\pi(2g-2) = \area(\rho_t(\mathcal{K})\backslash\Hy) = [\Gamma_t : \rho_t(\mathcal{K})]\cdot\area(\Gamma_t\backslash\Hy) = 2 \cdot 2\pi ,
\end{align*}
and so $g=2$. Moreover, $\theta(c_1c_2) = \overline{1}+\overline{1}= \overline{0}$ and thus $c_1c_2 \in \mathcal{K}$.

The quotient, $\rho_t(\mathcal{K})\backslash\Hy$, is a closed hyperbolic surface of genus 2.  The axis of the hyperbolic element $\rho_t(c_1c_2)$ projects onto a closed geodesic $\gamma_t$ in $\rho_t(\mathcal{K})\backslash\Hy$ that satisfies $2\cosh(\ell(\gamma_t)/2) = t$. If we let $t$ vary in the set $\cT$, then $\ell(\gamma_t)= 2\cosh\inv(t/2)$ covers a dense subset of $[0,+\infty)$.\\

\textit{Case 2}: $g>2$

For larger genera we proceed similarly. Using the Reidemeister process (see Appendix \ref{procedemeister} for the details) we can find an isomorphism $\Phi$ between $\langle x,y,x',y' \mid [x,y][x',y'] \rangle$ and $\mathcal{K}$ such that $\Phi(y) = c_1c_2$. We will mildly abuse notation and say that 

\[\mathcal{K}=\langle x,y,x',y' \mid [x,y][x',y'] \rangle \mbox{ and } y=c_1c_2.\]

For each $n > 1$ we define the homomorphism $\eta_n\!:\mathcal{K} \to \Z/n\Z$ given by 
\begin{align*}
&\eta_n(x) = \eta_n(x')= \eta_n(y') = \bar{1},\\
&\eta_n(y) = \bar{0}.
\end{align*}

Therefore for all $n>1$ we have: $\eta_n$ is an epimorphism, $y \in \mathcal{K}_n:=\ker\eta_n$ and $\mathcal{K}_n$ is a surface group of genus $n+1$, since $[\mathcal{K} : \mathcal{K}_n]=n$. Moreover, the geodesic $\gamma_t$ obtained in the case 1, lifts as a closed geodesic of the same length for all coverings $\rho_t(\mathcal{K}_n) \backslash \Hy$ of $\rho_t(\mathcal{K})\backslash\Hy$. Thus, the same conclusion as in the case $g=2$ holds for any genus $g>2$.
\end{proof}

\begin{repthrm}[\ref{dense:thm}]
	For any $g \ge 2$ there exists a length function $\ell:\rT_g \to \mathbb{R}$ such that 
	$$\{ \ell(S) \mid S \in \rT_g \mbox{ is semi-arithmetic} \}$$
	is dense on the set of positive real numbers.
\end{repthrm}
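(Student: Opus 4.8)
The plan is to deduce Theorem~\ref{dense:thm} directly from the proof of Theorem~\ref{mainthm}; the only point that needs to be added is that a \emph{single} length function already works. This is because in that proof the ``special element'' $c_1c_2$ is fixed once and for all, while only the holonomy representation is varied. So the length function attached to the curve represented by $c_1c_2$ should do the job.

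Concretely, fix $g\ge2$. From the proof of Theorem~\ref{mainthm} I would extract the surface group $\mathcal{K}_g$ of genus $g$ --- this is $\ker\theta$ when $g=2$ and $\ker\eta_{g-1}$ when $g>2$ --- which is an \emph{abstract} group, independent of the parameter $a$, together with the distinguished element $w=c_1c_2$, which lies in $\mathcal{K}_g$ because $\theta(c_1c_2)=\bar0$ (resp.\ $\eta_{g-1}(y)=\bar0$ with $y=c_1c_2$). Fixing once and for all an identification $\mathcal{K}_g\cong\pi_1(F)$, where $F$ is the closed topological surface of genus $g$, I let $\alpha\subset F$ be a closed curve whose free homotopy class corresponds to the conjugacy class of $w$. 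In each geometric realisation below, $w$ maps to a product of two half-turns about distinct points, hence to a hyperbolic --- in particular nontrivial --- isometry, so $\alpha$ is homotopically nontrivial and the length function $\ell\defeq\ell_\alpha\colon\rT_g\to\R$ is well defined; this is the function claimed by the theorem.

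Next I would let $\cT\subset[2,+\infty)$ be the dense set of Theorem~\ref{251019.1} and, for $t\in\cT$, take the representation $\rho_t\colon\Gamma\to\PSLTR$ built there, so that $\Gamma_t\defeq\rho_t(\Gamma)$ is semi-arithmetic of signature $(0;2,2,2,2,2,2)$. Restricting, $\psi_t\defeq\rho_t|_{\mathcal{K}_g}$ is injective with discrete cocompact image, so through $\mathcal{K}_g\cong\pi_1(F)$ it defines a point $S_t\in\T(\mathcal{K}_g)\cong\rT_g$ (see Remark~\ref{15320.1}). Since $\rho_t(\mathcal{K}_g)$ has finite index in the semi-arithmetic group $\Gamma_t$, it is itself semi-arithmetic (invariance of semi-arithmeticity under commensurability in the wide sense, \S\ref{semiFuchs}), and, as noted there, semi-arithmeticity is a property of the point $[\psi_t]$; hence every $S_t$ is a semi-arithmetic point of $\rT_g$. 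To finish, I would compute $\ell(S_t)$: by construction $\Tr\rho_t(w)=t$ (equivalently, apply Corollary~\ref{101019.4} to the two half-turns whose product is $\rho_t(c_1c_2)$, whose centres are at distance $2a$, where $t=2\cosh 2a$); since $t>2$ the element $\rho_t(w)$ is hyperbolic, and Proposition~\ref{31120.1} gives $t=2\cosh(\ell(S_t)/2)$, i.e.\ $\ell(S_t)=2\cosh\inv(t/2)=4a$. As $t$ runs over $\cT=2\cosh 2V$, the number $a$ runs over $V$, which is dense in $[0,+\infty)$ by Corollary~\ref{14919.1}; therefore $\{\ell(S_t)\mid t\in\cT\}=4V$ is dense in $[0,+\infty)$, hence dense in $(0,+\infty)$ as well, the latter being open in $[0,+\infty)$. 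Since the $S_t$ are semi-arithmetic points of $\rT_g$, the theorem follows.

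I do not expect a genuine obstacle: the argument is a direct translation of facts already proved. The one point that needs care --- and which is precisely why the proof of Theorem~\ref{mainthm} was organised around a fixed group element with a varying representation --- is to fix the marking $\mathcal{K}_g\cong\pi_1(F)$ and the curve $\alpha$ \emph{independently of $t$}, so that all the $S_t$ sit inside one common copy of $\rT_g$ with $\alpha$ always representing the class of $c_1c_2$. Granting this bookkeeping, nothing else is required.
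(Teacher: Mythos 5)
Your proposal is correct and is, in substance, the same argument the paper gives: the paper's proof of Theorem~\ref{dense:thm} is a one-line reference to the proof of Theorem~\ref{mainthm}, and your write-up simply unpacks that reference carefully, including the bookkeeping point that the abstract group $\mathcal{K}_g$, the marking $\mathcal{K}_g\cong\pi_1(F)$, and the curve $\alpha$ representing the class of $c_1c_2$ must all be fixed independently of the parameter $t$, which is exactly why the construction in Theorem~\ref{mainthm} varies the representation $\rho_t$ while keeping the group element fixed. The computation $\ell(S_t)=2\cosh^{-1}(t/2)=4a$ and the appeal to density of $V$ (hence of $4V$) in $(0,\infty)$ match the paper.
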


\begin{proof}
	This follows straight from (the proof of) Theorem \ref{mainthm} (see Remark \ref{15320.1}). Indeed, the axis of the hyperbolic element $\rho_t(c_1c_2)$ projects onto a closed geodesic in $\rho_t(\cK_n)\backslash\Hy$, whose free homotopy class induces the length function $\ell$ with the desired properties.
\end{proof}

\subsection{Applications}\label{19420.4}

Our first application will be Theorem \ref{cor:densityofsys}, whose proof uses the fact that $\mathcal{K}$ is a \emph{limit group}. In fact, in Lemma \ref{hom} we prove a stronger result. Similar ideas were used in \cite[Theorem 5.4]{D17}. Before we proceed, however, we must introduce some notation.

Let $\pi_1(S_2)$ be the fundamental group of an orientable compact surface of genus $2$ and let $\F_2$ be the free group of rank 2 with the set of generators $A=\{x, y\}$ . If we consider a new copy $\F_2'$ of $\F_2$ with the set of generators $A'=\{ x', y' \}$ and let $u=[x,y] \in \F_2, v=[y',x'] \in \F_2'$, it is well known that $\pi_1(S_2)$ is isomorphic to the quotient $\F_2 \ast \F_2' / \langle\langle u*v^{-1} \rangle\rangle $, where $\F_2 \ast \F_2'$ denotes the free product of $\mathbb{F}_2$ and $\mathbb{F}_2'$.

Considering the natural monomorphisms $\iota: \F_2 \rightarrow \pi_1(S_2)$ and $\iota': \F_2' \rightarrow \pi_1(S_2)$  we can identify $\F_2, \F_2'$ as subgroups of $\pi_1(S_2)$. With this identification in mind, take $B=A \cup A'$ as a set of generators of $\pi_1(S_2)$.

\begin{lemma}[{\cite[Proposition 4.3]{D17}}] \label{hom}
	There exists a constant $\varepsilon > 0$ such that for every positive integer $k$ we can find an epimorphism $\psi_k: \pi_1(S_2) \rightarrow \F_2$ with the following properties:
	\begin{enumerate}
		\item \emph{Left inverse of $\iota$:} \label{29820.3}
		\begin{align*}
			\psi_k(\iota(z))=z \quad\text{ for every z } \in \F_2;
		\end{align*}
		\item \emph{$k\varepsilon$-Lipschitz:} \label{29820.2}
		\begin{align*}
			w_A(\psi_k(\eta)) \leq \varepsilon k w_B(\eta) \quad \text{for all } \eta \in \pi_1(S_2).
		\end{align*}
	\end{enumerate}
	\vspace{2mm}
	Moreover, $\psi_k$ satisfies the more technical property: 
	\vspace{2mm}
	\begin{enumerate}[resume]
		\item For any $\gamma \in \pi_1(S_2)$ with $1 \leq w_B(\gamma)\leq k$ we have $\psi_k(\gamma) \neq 1$ and, furthermore, if $\gamma \notin \langle x \rangle$ (respectively $\gamma \notin \langle y \rangle$), then $\psi_k(\gamma) \notin \langle x \rangle$ (respectively $\psi_k(\gamma) \notin \langle y \rangle$).\label{29820.1}
	\end{enumerate}
	
	\begin{rmk}
		Although item \eqref{29820.1} is not entirely contained in the statements of Proposition 4.3 in \cite{D17}, it follows from the construction of the map $\psi_k$ therein.
	\end{rmk}

\end{lemma}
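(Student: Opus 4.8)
The statement is, up to the final item, \cite[Proposition 4.3]{D17}, so the plan is to recall that construction and then read off property~\eqref{29820.1} from it. First I would present $\pi_1(S_2)$ in the form dictated by its geometry: cutting $S_2$ along a separating simple closed curve into two one-holed tori exhibits, via Seifert--van Kampen, an amalgam decomposition $\pi_1(S_2)\cong\F_2\ast_{\langle c\rangle}\F_2'$ with $c=[x,y]$ identified with $[y',x']$, the factors being $\F_2=\langle x,y\rangle$ and $\F_2'=\langle x',y'\rangle$. The edge group $\langle c\rangle$ is malnormal in each factor, since $c$ is not a proper power, so $\langle c\rangle$ is maximal cyclic, hence self-normalising, in a free group. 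The starting point is the obvious retraction $r_0\colon\pi_1(S_2)\to\F_2$ given by $x\mapsto x$, $y\mapsto y$, $x'\mapsto y$, $y'\mapsto x$: it is a well-defined epimorphism (it kills the relator since $[y,x]=[x,y]^{-1}$), it is a left inverse of $\iota$, and its restriction to the factor $\F_2'$ is an isomorphism onto $\F_2$ carrying $\langle c\rangle$ onto $\langle c\rangle$. By itself $r_0$ collapses, e.g., $x'y^{-1}$, so it already fails \eqref{29820.1} for $k\ge2$, and the construction is designed to repair this.

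The repair is to pre-compose $r_0$ with a large power of a Dehn twist. Let $\alpha_k\in\Aut(\pi_1(S_2))$ be given by $x\mapsto x$, $y\mapsto y$, $x'\mapsto c^{-k}x'c^{k}$, $y'\mapsto c^{-k}y'c^{k}$; this is a well-defined automorphism (with inverse $\alpha_{-k}$) because powers of $c$ commute with $c$, and it fixes $\F_2=\langle x,y\rangle$ pointwise. I would then set $\psi_k:=r_0\circ\alpha_k$. Property \eqref{29820.3} is then immediate, since both $\alpha_k$ and $r_0$ fix $\F_2$ pointwise, and $\psi_k$ is onto because it fixes $x$ and $y$. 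For \eqref{29820.2}, note that $\alpha_k$ sends every generator of $\pi_1(S_2)$ to a word of length $\le 8k+1$ in $B$, while $r_0$ is $1$-Lipschitz from $(\pi_1(S_2),B)$ to $(\F_2,A)$; hence $w_A(\psi_k(\eta))\le(8k+1)\,w_B(\eta)\le 9k\,w_B(\eta)$, so $\varepsilon=9$ works uniformly in $k$.

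The last item \eqref{29820.1} is the one that actually requires work, and I would attack it through amalgam normal forms together with malnormality of $\langle c\rangle$ in $\F_2$. Given $\gamma$ with $1\le w_B(\gamma)\le k$, put it in normal form $\gamma=g_1\cdots g_m$, the $g_i$ lying alternately in $\F_2\setminus\langle c\rangle$ and $\F_2'\setminus\langle c\rangle$. Applying $\alpha_k$ only multiplies each syllable on either side by powers of $c$, so it preserves the syllable pattern; in particular $\alpha_k(\gamma)$ lies in a single factor iff $\gamma$ does, and those cases are disposed of at once ($\psi_k(\gamma)=\gamma\ne1$ if $\gamma\in\F_2$, and $\psi_k(\gamma)$ is a conjugate by a power of $c$ of $r_0(\gamma)\ne1$ if $\gamma\in\F_2'$, using that $r_0|_{\F_2'}$ is injective). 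When $m\ge2$, applying $r_0$ turns $\alpha_k(\gamma)$ into a word in $\F_2$ made of blocks $u_i\notin\langle c\rangle$ with $\sum_i|u_i|_A\le w_B(\gamma)\le k$, separated by powers $c^{\pm k}$. Since $|c|=4$, each such power has length $4k$ whereas all the $u_i$ together have length at most $k$, so a ping-pong estimate --- using malnormality of $\langle c\rangle$ and that the $u_i$ avoid it --- shows that the reduced word representing $\psi_k(\gamma)$ contains a nonempty uncancelled block of $c$'s; hence $\psi_k(\gamma)\ne1$, and, such a block involving both $x$- and $y$-letters, $\psi_k(\gamma)$ lies in neither $\langle x\rangle$ nor $\langle y\rangle$. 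I expect this last ping-pong --- controlling how the cancellations between the short cores $u_i$ and the long $c$-powers propagate along the word, uniformly over all admissible $\gamma$ --- to be the main technical obstacle; it is precisely the computation carried out in \cite[Proposition 4.3]{D17}.
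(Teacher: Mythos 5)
The paper offers no proof here --- Lemma \ref{hom} is cited directly from \cite[Proposition 4.3]{D17}, and the only original content is the remark that item \eqref{29820.1} follows from the construction given there --- so there is no argument of the paper's to compare against. Your reconstruction (precompose the retraction $r_0\colon\pi_1(S_2)\to\F_2$ fixing $x,y$ and sending $x'\mapsto y$, $y'\mapsto x$, with the $k$-th power $\alpha_k$ of the Dehn twist along the separating curve $c=[x,y]$) is the natural one, and your checks of items \eqref{29820.3} and \eqref{29820.2}, with $\varepsilon=9$ coming from $|c|_B=4$, are correct.

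The verification of item \eqref{29820.1}, however, is only sketched, and one step as written does not hold up. You pass to an amalgam normal form $\gamma=g_1\cdots g_m$, extract ``cores'' $u_i\notin\langle c\rangle$, and assert $\sum_i|u_i|_A\le w_B(\gamma)$. But the syllables $g_i$ are defined only modulo $\langle c\rangle$, a $B$-geodesic for $\gamma$ bears no a priori relation to its syllable decomposition, and nothing forces the cores you extract to lie outside $\langle c\rangle$; so the claimed length bound is not available in that form. The correct bookkeeping is to apply $\psi_k$ letter-by-letter to a $B$-geodesic $b_1\cdots b_n$ for $\gamma$: each $b_j\in A^{\pm1}$ contributes the single letter $b_j$, each $b_j\in (A')^{\pm1}$ contributes $c^{-k}r_0(b_j)c^k$, and consecutive $A'$-letters collapse into one conjugated block, producing an $\F_2$-word whose non-$c^{\pm k}$ part has total $A$-length exactly $n\le k$, against which the $4k$-letter blocks $c^{\pm k}$ can be played. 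Even from this cleaner starting point, showing that a nonempty stretch of $c$-letters survives free reduction --- and that it simultaneously contains $x$- and $y$-letters, as needed for the ``furthermore'' clause --- is exactly the cancellation estimate carried out in \cite[Proposition 4.3]{D17}, and you, like the paper, ultimately defer that step.
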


Let $X$ be a non-empty set and let $f: X \to X$ be a permutation of $X$. We define its support by $\mathrm{supp}(f)=\{x \in X \mid f(x) \neq x \}$. In this case, for any subset $Y$ of $X$ containing $\mathrm{supp}(f),$ the restriction of $f$ to $Y$ is a permutation of $Y$. Finally, given non-empty subsets $U,V \subset \R$ we say that $U > V$ if $u>v$ for any $u \in U$ and $v \in V.$
%
%
%

\begin{lemma}\label{action}
 For any integer $m \ge 1$, there exist a finite set $X_m$, a point $p\in X_m$, and an action $\F_2 \curvearrowright X_m$ satisfying:
 \begin{enumerate}
  \item $y \cdot p = p$;
  \item If $z \cdot p = p$ and $w_A(z) \leq m$, then $z=y^l$ for some $l$.
 \end{enumerate}

\end{lemma}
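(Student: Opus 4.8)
The plan is to construct $X_m$ as a finite union of supports of cleverly chosen permutations of $\N$, acted on by $\F_2=\langle x,y\rangle$, so that the orbit of a basepoint $p$ under a reduced word in $x,y$ of length $\le m$ is forced to ``escape'' to a disjoint region whenever a letter $x$ appears. First I would prove the following combinatorial claim: for any integer $k\ge 1$ and any finite set $Y\subset\N$, there is a permutation $\tau$ of $\N$ with $\mathrm{supp}(\tau)=Y\cup Z$, where $Z=\{\tau^l(t)\mid t\in Y,\ 0<|l|\le k\}$ satisfies $Z>Y$ (every element of $Z$ exceeds every element of $Y$). Taking $Y=\{1,\dots,a\}$, one writes $\tau$ as a product of disjoint $(k{+}1)$-cycles $\tau_i=(i\ \ i{+}a\ \ i{+}2a\ \ \cdots\ \ i{+}ka)$ for $1\le i\le a$; then for $0<|l|\le k$ one computes $\tau^l(i)=\tau_i^l(i)\in\{i+|l|a,\ i+(k-|l|+1)a\}$, which is $>a$, giving $Z>Y$.

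Next I would iterate this claim with $k=m$ to build a chain of permutations. Start with $Y_1=\{1,\dots,m\}$ and get $\zeta_1$ with $\mathrm{supp}(\zeta_1)=Y_1\cup Y_2$, $Y_2>Y_1$; inductively, having $Y_q$, apply the claim to obtain $\zeta_q$ with $\mathrm{supp}(\zeta_q)=Y_q\cup Y_{q+1}$ and $Y_{q+1}>Y_q$. Set $Y_0=\{0\}$, introduce $\zeta_0=(0\ 1\ \cdots\ m)$ (so $\mathrm{supp}(\zeta_0)=Y_0\cup Y_1$), and define
\[
X_m=\{0\}\cup\mathrm{supp}(\zeta_0)\cup\mathrm{supp}(\zeta_1)\cup\cdots\cup\mathrm{supp}(\zeta_{2m-1}),
\]
which is finite. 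Because the $Y_i$ are strictly increasing as sets, $\mathrm{supp}(\zeta_i)$ and $\mathrm{supp}(\zeta_j)$ are disjoint whenever $i\ne j$ have the same parity. Now define the $\F_2$-action on $X_m$ by letting $x$ act as $\zeta_{2p}$ on $\mathrm{supp}(\zeta_{2p})$ (and fix everything else) and $y$ act as $\zeta_{2q+1}$ on $\mathrm{supp}(\zeta_{2q+1})$ (and fix everything else); the parity-disjointness guarantees these are well-defined permutations of $X_m$. Take $p=0$; then $y\cdot 0=0$ since $0\notin\mathrm{supp}(\zeta_{2q+1})$ for any $q$, giving (1).

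For (2), suppose $z\cdot 0=0$ with $w_A(z)\le m$ but $z\notin\langle y\rangle$. Write $z$ in reduced form $z=y^{i_0}x^{j_1}y^{i_1}\cdots x^{j_\kappa}y^{i_\kappa}$ with all $j_r\ne0$ and $i_1,\dots,i_{\kappa-1}\ne0$, and note $\kappa\le\sum(|i_r|+|j_r|)=w_A(z)\le m$. Reading from the right: $y^{i_\kappa}\cdot 0=0$, then $x^{j_\kappa}\cdot 0\in Y_1$ since $|j_\kappa|\le m=k$ puts it in the ``$Z$-part'' of $\mathrm{supp}(\zeta_0)$. Inductively, the suffix $z_\sigma=x^{j_\sigma}y^{i_\sigma}\cdots x^{j_\kappa}y^{i_\kappa}$ sends $0$ into $Y_{1+2(\kappa-\sigma)}$: applying $y^{i_s}$ keeps us in the current odd-indexed $Y$ (or fixes it, but the construction keeps us inside the support since $|i_s|\le m$), and applying $x^{j_s}$ with $|j_s|\le m$ moves us up two levels by the claim. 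Hence $z\cdot 0=z_1\cdot 0$ lies in some $Y_i$ with $1\le i\le 2m$, so $z\cdot 0\in X_m\setminus\{0\}$, contradicting $z\cdot 0=0$. Therefore $z\in\langle y\rangle$, proving (2).

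The main obstacle is bookkeeping the level-shifts precisely: one must verify that applying a power $x^{j_s}$ (resp.\ $y^{i_s}$) with exponent bounded by $m$ indeed lands in the next higher $Y$-block and never collapses back, which relies on $Z>Y$ in the claim and on the parity-disjointness of the supports; getting the indices $1+2(\kappa-\sigma)$ and the bound $\kappa\le m$ to line up (so that we never exhaust the prepared chain $\zeta_0,\dots,\zeta_{2m-1}$) is the delicate point.
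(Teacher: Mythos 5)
Your proposal is correct and follows essentially the same approach as the paper's proof: the same permutation claim with $(k{+}1)$-cycles, the same iterated construction of the $\zeta_q$ and blocks $Y_q$, the same parity-based definition of the $\F_2$-action, and the same ``escape-to-higher-blocks'' argument for (2). The only cosmetic difference is that you include $\mathrm{supp}(\zeta_0)$ in the union defining $X_m$, which is already contained in $\{0\}\cup\mathrm{supp}(\zeta_1)$, so this changes nothing.
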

\begin{proof}
    We first prove the following claim:

    {\noindent\bf Claim 1:} Let $k \geq 1$ be an integer. For any finite set $Y \subset \N$ there exists a permutation $\tau:\N \to \N$ such that $\mathrm{supp}(\tau)=Y \cup Z$ where $Z=\{ \tau^l(x) \mid x \in Y \mbox{ and } 0<|l|\leq k\}$ and $Z > Y$.

\medskip

Indeed, we can suppose that $Y=\{1,\ldots,a\}$ for some $a \geq 1$. Take $\tau$ as the product $\tau_1\cdots\tau_a$ of disjoint cycles given by $\tau_i=(i \,\, i+a  \,\, i+2a \,\, \ldots \,\, i+ka).$  Note that for any $1\leq i \leq a$ and $l$ with $0<|l|\leq k,$ we have 
\[\tau^l(i)=\tau_i^l(i) \in \{i+|l|a,i+(k-|l|+1)a\} \Rightarrow \tau^l(i)>a.\]

Now we can return to the proof of the lemma. We apply the claim above for the set $Y_1=\{1,\ldots,m\}$ with $k=m$ in order to get a permutation $\zeta_1:\N \to \N$ such that $\mathrm{supp}(\zeta_1)=Y_1 \cup Y_2$, where $Y_2=\{\zeta_1^l(t) \mid t \in Y_1 \mbox{ and } 0<|l|\leq m\}$ satisfies $Y_2>Y_1.$  If we apply the claim inductively, with $k=m$ and $Y=Y_q$, we obtain for every $q \geq 1$ a permutation $\zeta_q:\N \to \N$ with support equal to the union of $Y_q$ and $Y_{q+1}:= \{\zeta_q^l(t) \mid t \in Y_q \mbox{ and } 0<|l|\leq m\}$ and such that $Y_{q+1}>Y_q.$

Consider the set
\begin{align*}
	X_m=\{0\} \cup \mathrm{supp}(\zeta_1) \cup \mathrm{supp}(\zeta_2) \cup \cdots \cup \mathrm{supp}(\zeta_{2m-1}).
\end{align*}

By construction, $X_m$ is finite and we can see any $\zeta_i$ as a permutation of $X_m$. If we define $Y_{0}=\{0\}$ and $\zeta_0=(0 \,\, 1 \,\, \cdots \,\, m)$ in $X_m$, we may write that $\mathrm{supp}(\zeta_i)=Y_{i}\cup Y_{i+1}$ and $Y_{i+1}>Y_i$ for any $i=0,\ldots,2m-1$. In particular,
\begin{align*}
	X_m = Y_0\cup Y_1 \cup \cdots \cup Y_{2m},
\end{align*}
and, for $i\neq j$ with the same parity, we have that $\mathrm{supp}(\zeta_i) \cap \mathrm{supp}(\zeta_j)=\emptyset.$ We may therefore define the action of $\F_2$ on $X_m$ in the following way: for any $\alpha, \beta \in X_m,$ 
\begin{align*}
 x \cdot \alpha & = \zeta_{2p} \cdot \alpha  \mbox{, if } \alpha \in \mathrm{supp}(\zeta_{2p}) \mbox{ for some } p, \mbox{ otherwise } x \mbox{ fixes } \alpha. \\
 y \cdot \beta &  =  \zeta_{2q+1} \cdot \beta \mbox{, if } \beta \in \mathrm{supp}(\zeta_{2q+1}) \mbox{ for some } q, \mbox{ otherwise } y \mbox{ fixes } \beta.
 \end{align*} 

It follows from definition that $x$ only fixes the elements of $Y_m$, while $y$ fixes the elements of $Y_0$, i.e., $y \cdot 0 = 0$. Suppose $\omega_A(z) \leq m$ and $z \notin \langle y \rangle$. We may write 
\begin{align*}
z = y^{i_0}\,x^{j_1}\,\cdots \,x^{j_\kappa}\,y^{i_\kappa} \mbox{ with } j_1i_1j_2 \cdots i_{k-1}j_\kappa \neq 0.    
\end{align*}
where \[\kappa \leq |i_0| + \sum_{r=1}^\kappa (|i_r|+|j_r|) = \omega_A(z) \leq m.\]

We need to check that $z \cdot 0 \neq 0.$ Indeed,
we note first that  $x^{j_\kappa} \cdot (y^{i_k} \cdot 0) \in Y_1$ since $|j_k| \leq m$. By the construction and the fact that each $|i_l|,|j_l| \leq m,$ it follows that the subword $z_\sigma=x^{j_\sigma} \,\cdots \,x^{j_\kappa}\,y^{i_\kappa}$ satisfies $z_\sigma \cdot 0  \in Y_{1+2(\kappa-\sigma)}$. Since $1 \leq \kappa \leq m$, we get $z_1 \cdot 0 \in \displaystyle\cup_{j=1}^mY_{2j-1}$ and finally $z \cdot 0 \in \displaystyle\cup_{i=1}^{2m} Y_i = X_m \setminus \{0\}.$

\end{proof}

Now we are ready for the proof of:

\begin{repthrm}[\ref{cor:densityofsys}]
	The set $\{ \sys(S) \mid S \mbox{ \emph{is a closed semi-arithmetic Riemann surface}} \}$ is dense in the positive real numbers.
\end{repthrm}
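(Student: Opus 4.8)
The plan is: for each $t$ in the dense subset $\cT\subset[2,+\infty)$ of Theorem~\ref{251019.1}, start from the genus-$2$ semi-arithmetic surface group $\rho_t(\mathcal{K})$ constructed in the proofs of Theorems~\ref{mainthm} and~\ref{dense:thm}, pass to a carefully chosen finite-index subgroup $\Lambda_t<\mathcal{K}$, and check that the closed surface $\rho_t(\Lambda_t)\backslash\Hy$ is semi-arithmetic with systole exactly $2\cosh^{-1}(t/2)$. Since $\{2\cosh^{-1}(t/2)\mid t\in\cT\}$ is dense in $(0,+\infty)$, this proves the theorem. Here $y=c_1c_2\in\mathcal{K}$ is the distinguished element with $\Tr\rho_t(y)=t$, so $\rho_t(y)$ is hyperbolic with $\ell(\rho_t(y))=2\cosh^{-1}(t/2)=:l(t)$ by Proposition~\ref{31120.1}; moreover $\rho_t$ is injective on $\mathcal{K}$ (it restricts the faithful representation of the construction), so word lengths in $\rho_t(\mathcal{K})$ with respect to $\Sigma_t:=\rho_t(\{x,y,x',y'\})$ coincide with word lengths in $\mathcal{K}$ with respect to $\{x,y,x',y'\}$.

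The choice of $\Lambda_t$ proceeds as follows. Apply Lemma~\ref{milnorlemma} to $\rho_t(\mathcal{K})$ with the generating set $\Sigma_t$ to get constants $C(t)>0$ and $c(t)$ with $\ell(\gamma)\geq C(t)\,w_{\Sigma_t}(\gamma)-c(t)$ for every hyperbolic $\gamma\in\rho_t(\mathcal{K})$. Set $\nu(t)=\bigl\lceil(|c(t)|+l(t))/C(t)\bigr\rceil$ and $\mu(t)=\lceil\varepsilon\,\nu(t)^2\rceil$, where $\varepsilon$ is the constant of Lemma~\ref{hom}. Feeding $m=\mu(t)$ into Lemma~\ref{action} produces a finite $\F_2$-set $X_{\mu(t)}$ and a point $p$ whose stabiliser $H_t<\F_2$ contains $y$ and has the property that $z\in H_t$, $w_A(z)\leq\mu(t)$ implies $z\in\langle y\rangle$. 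Feeding $k=\nu(t)$ into Lemma~\ref{hom} produces an epimorphism $\psi:=\psi_{\nu(t)}:\mathcal{K}\cong\pi_1(S_2)\to\F_2$ which is the identity on $\langle x,y\rangle$, satisfies $w_A(\psi(\eta))\leq\varepsilon\,\nu(t)\,w_B(\eta)$ for all $\eta$, and sends every element of $B$-word length in $[1,\nu(t)]$ lying outside $\langle y\rangle$ to an element outside $\langle y\rangle$. Put $\Lambda_t:=\psi^{-1}(H_t)$, a finite-index subgroup of $\mathcal{K}$; it contains $y$ because $\psi(y)=y\in H_t$.

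The crux is the claim that every hyperbolic $\beta\in\rho_t(\Lambda_t)\setminus\langle\rho_t(y)\rangle$ has $w_{\Sigma_t}(\beta)>\nu(t)$. Suppose not, and write $\beta=\rho_t(\eta)$ with $\eta\in\Lambda_t$; by injectivity of $\rho_t$ we get $\eta\notin\langle y\rangle$, and by assumption $1\leq w_B(\eta)=w_{\Sigma_t}(\beta)\leq\nu(t)$. Then the non-triviality clause of Lemma~\ref{hom} gives $\psi(\eta)\notin\langle y\rangle$, while $\eta\in\Lambda_t$ gives $\psi(\eta)\in H_t$; hence the separation property of Lemma~\ref{action} forces $w_A(\psi(\eta))>\mu(t)\geq\varepsilon\,\nu(t)^2$, contradicting $w_A(\psi(\eta))\leq\varepsilon\,\nu(t)\,w_B(\eta)\leq\varepsilon\,\nu(t)^2$. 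With the claim in hand, Lemma~\ref{milnorlemma} together with the definition of $\nu(t)$ gives, for such $\beta$,
\[\ell(\beta)\ \geq\ C(t)\,w_{\Sigma_t}(\beta)-c(t)\ >\ C(t)\,\nu(t)-|c(t)|\ \geq\ l(t),\]
whereas every nontrivial element of $\langle\rho_t(y)\rangle$ has translation length a positive integer multiple of $l(t)$, with $\rho_t(y)^{\pm1}$ attaining $l(t)$. Since $\rho_t(\Lambda_t)$ is torsion-free and cocompact, all its nontrivial elements are hyperbolic, so the systole of $\rho_t(\Lambda_t)\backslash\Hy$ equals $l(t)=2\cosh^{-1}(t/2)$. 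As $\rho_t(\Lambda_t)$ has finite index in the semi-arithmetic group $\rho_t(\mathcal{K})$ it is semi-arithmetic, so $\rho_t(\Lambda_t)\backslash\Hy$ is a closed semi-arithmetic Riemann surface of systole $2\cosh^{-1}(t/2)$; letting $t$ vary over $\cT$ finishes the proof.

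The genuinely hard inputs — the retraction $\psi_k$ and the finite $\F_2$-action separating $y$ from short words outside $\langle y\rangle$ — are already packaged in Lemmas~\ref{hom} and~\ref{action} (and the density of $\cT$ in Theorem~\ref{251019.1}), so I expect the only real work here to be bookkeeping: choosing $\nu(t)$ and $\mu(t)$ so that the Lipschitz bound of Lemma~\ref{hom} and the separation bound of Lemma~\ref{action} collide in the proof of the claim, and verifying that the word metrics transport correctly along the embedding $\rho_t|_{\mathcal{K}}$.
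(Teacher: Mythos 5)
Your proposal is correct and follows the paper's own argument essentially line for line: the same choices of $\nu(t)=\lceil(|c(t)|+2\cosh^{-1}(t/2))/C(t)\rceil$ and $\mu(t)=\lceil\varepsilon\nu(t)^2\rceil$, the same finite-index subgroup $\Lambda_t=\psi_{\nu(t)}^{-1}(H_t)$ built from Lemmas~\ref{hom} and~\ref{action}, the same contradiction establishing $w_{\Sigma_t}(\beta)>\nu(t)$ for hyperbolic $\beta\notin\langle\rho_t(y)\rangle$, and the same conclusion via Lemma~\ref{milnorlemma} that $\sys(\rho_t(\Lambda_t)\backslash\Hy)=2\cosh^{-1}(t/2)$. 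No substantive deviation from the paper's proof.
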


\begin{proof}

Let $t \in \cT$ be fixed, where $\cT$ is the set given by Theorem \ref{251019.1}. We consider the Fuchsian group $\rho_t(\mathcal{K})$ with set of generators $\Sigma_t:=\rho_t(\Sigma)$, where $\Sigma=\{x,y,x',y'\}$ is the standard set of generators of $\mathcal{K}$, constructed in the proof of Theorem \ref{dense:thm}. By Lemma \ref{milnorlemma}, there exist constants $C(t),c(t)$ with $C(t)>0$ such that, for any hyperbolic element $\gamma \in \rho_t(\mathcal{K})$ we have
\begin{align}\label{D100320.1}
 \ell(\gamma) \geq C(t)w_{\Sigma_t}(\gamma)-c(t),   
\end{align}
where $w_{\Sigma_t}(\gamma)$ denotes the word length of $\gamma$ with respect to $\Sigma_t$.

Now define 
$\nu(t)=\Bigl\lceil \frac{|c(t)|+2\cosh\inv(t/2)}{C(t)} \Bigr\rceil$ and $\mu(t) = \lceil \varepsilon \nu(t)^2 \rceil$, where $\varepsilon > 0$ is the constant given by Lemma \ref{hom} and $\lceil x \rceil = \min\{ n \geq 1 \mid x \leq n \}$ for any $x>0$.

By Lemma \ref{action}, there exist a finite set $X_{\mu(t)}$ and an action $\F_2 \curvearrowright X_{\mu(t)}$ such that, for some $p \in X_{\mu(t)}$, its isotropy group $H_t=\{\zeta \in \F_2 \mid \zeta \cdot p = p \}$ contains $y$ and satisfies 
\begin{equation}\label{D23820.1}
  z \in H_t, \,\, w_A(z) \leq \mu(t) \Rightarrow z \in \langle y \rangle.
\end{equation}

By Lemma \ref{hom}, there exists an epimorphism $\psi:=\psi_{\nu(t)}:\mathcal{K} \rightarrow \F_2$ such that $\psi(\eta) \neq 1$ if $1<w_{\Sigma_t}(\eta) \leq \nu(t)$ and, for all $\xi \in \mathcal{K}$, $\omega_A(\psi(\xi)) \leq \varepsilon\, \nu(t) \,  w_{\Sigma_t}(\xi)$. Now we consider the finite index subgroup $\Lambda_t=\psi\inv(H_t) < \mathcal{K}$.

\medskip

{\noindent\bf Claim 1:} Any hyperbolic element $\beta \in \rho_t(\Lambda_t) \setminus \langle \rho_t(y) \rangle $ satisfies

\[w_{\Sigma_t}(\beta) > \nu(t).\]

\medskip

We argue by contradiction. Suppose there exists $\beta=\rho_t(\eta)$, for some (unique) $\eta \in \Lambda_t \setminus \langle y \rangle$, with  $w_\Sigma(\eta) = w_{\Sigma_t}(\beta)\leq\nu(t)$. By Lemma \ref{hom} (item \eqref{29820.1}) we have $\psi(\eta) \neq 1$ and $\psi(\eta) \in H_t \setminus \langle y \rangle $. Then, it follows from our choices of $\mu(t)$ and $\nu(t)$, together with \eqref{D23820.1} and property $(3)$ of $\psi$ (in Lemma \ref{hom}) that

\[\varepsilon\nu(t)^2 \leq \mu(t) < w_A(\psi(\eta)) \leq  \varepsilon \nu(t)  w_\Sigma(\eta)=\varepsilon \nu(t) w_{\Sigma_t}(\beta)\leq \varepsilon \nu(t)^2.\]

This contradiction implies that \[w_{\Sigma_t}(\beta)> \nu(t)\geq \frac{c(t)+2\cosh \inv (\frac{t}{2})}{C(t)}. \]

\bigskip

Now we conclude the argument by showing that the systole $s(t)$ of the closed surface $\rho_t(\Lambda_t) \backslash \Hy$ is realised by the hyperbolic element $\rho_t(y)$, i.e. $s(t)=2\cosh \inv (\frac{t}{2}).$

Indeed, since $\psi(y)=y$ (item $(1)$ of Lemma \ref{hom}), then $\rho_t(y) \in \rho_t(\Lambda_t)$ corresponds to a closed geodesic of length $2\cosh \inv (\frac{t}{2})$, in particular $s(t) \leq 2\cosh \inv (\frac{t}{2})$. On the other hand, let $\alpha$ be a hyperbolic element in $\rho_t(\mathcal{K})$ corresponding to the systole of the covering $\rho_t(\Lambda_t) \backslash \Hy$. If $\alpha \in \langle \rho_t(y) \rangle$ then $s(t) \geq 2\cosh \inv (\frac{t}{2})$ and we are done.
Actually, $\alpha$ must be in the cyclic group $\langle \rho_t(y) \rangle$, for if it is not, Claim $1$ would imply that $w_{\Sigma_t}(\alpha) > \nu(t)$ and \eqref{D100320.1}, together with our choice for $\nu(t)$, would then give 
	\begin{align*}
		s(t)=\ell(\alpha) \geq C(t)w_{\Sigma_t}(\alpha)-c(t) > C(t)\nu(t)-c(t) \geq 2\cosh\inv(t/2),
	\end{align*}
contrary to what we have established.

Since  $t \in \cT$ was arbitrarily chosen and $\cT$ is dense in the interval $[2,\infty)$, we conclude that $\{s(t) \mid t \in \cT\}$ is a dense subset of the positive real numbers and hence, so is the set $\{\sys(S) \mid S \mbox{ \emph{is a closed semi-arithmetic Riemann surface}} \}$. 
\end{proof}

Next, we show how to realise infinitely many number fields as the invariant trace field of a semi-arithmetic Fuchsian group with a fixed genus $g\geq 2$. The idea is essentially contained in Theorem \ref{251019.1} and Theorem \ref{mainthm}.

\begin{theorem}\label{20420.1}
Every totally real number field of prime degree at least 3 is realised as the invariant trace field of a genus $g$ Riemann surface with integral traces.
\end{theorem}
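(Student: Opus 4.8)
The plan is to produce, directly from the reflection group of a right‑angled hexagon as in \S\ref{densetoflengths}, a semi‑arithmetic Fuchsian group of signature $(0;2,2,2,2,2,2)$ whose invariant trace field is the prescribed field $K$, and then to pass to torsion‑free finite‑index subgroups of arbitrary genus. First I would choose the parameter: let $p=[K:\Q]$ be an odd prime. Since $K$ is totally real, Dirichlet's Unit Theorem (Theorem \ref{dirichlet}) gives $\mathrm{rank}\,\mathcal{O}_K^\times=p-1\geq2$, so $\mathcal{O}_K^\times$ is infinite and contains a unit $\epsilon$ that is positive at the distinguished real place and is not a root of unity; then $\epsilon\notin\{\pm1\}$, and since a proper subfield of $K$ of prime degree is $\Q$ we have $\Q(\epsilon)=K$. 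Put $\mu=\epsilon+2$, so $\mu>2$ at the distinguished place, $\mu-2=\epsilon\in\mathcal{O}_K^\times$, and $\Q(\mu)=K$.

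Next I would let $\alpha>0$ satisfy $2\cosh\alpha=\mu$ and let $\Gamma<\PSLTR$ be the orientation‑preserving index‑$2$ subgroup of the group generated by the reflections across the sides of the right‑angled hexagon $\cH$ all of whose three non‑adjacent sides have length $\alpha$ (which exists and is unique by Lemma \ref{101019.3}, with discrete reflection group by Poincar\'e's theorem). Then $\Gamma$ is a cocompact Fuchsian group with the presentation of $\Gamma$ from \S\ref{densetoflengths}, generated by the half‑turns $C_1,\dots,C_6$ about the vertices $v_1,\dots,v_6$ in cyclic order, with $C_i^2=C_1\cdots C_6=1$; by Lemma \ref{101019.3} the remaining three sides have common length $\beta$ with $2\cosh\beta=\tfrac{2\mu}{\mu-2}=2+4\epsilon\inv$. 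By Corollary \ref{101019.4} one has $\Tr(C_iC_j)=2\cosh d_\Hy(v_i,v_j)$, and a short computation with the hexagon (consecutive vertices give the side lengths; vertices at combinatorial distance $2$ are joined through a right angle, so $\cosh d=\cosh(\text{side})\cosh(\text{side})$; and $\cosh d(v_i,v_{i+3})=\cosh^2\alpha\cosh\beta-\sinh^2\alpha$ for opposite vertices) shows that each $\Tr(C_iC_j)$ is a fixed Laurent polynomial in $\epsilon$ with integer coefficients: the six side traces are $\mu=\epsilon+2$ and $2+4\epsilon\inv$, the six distance‑$2$ traces equal $\tfrac{\mu^2}{\mu-2}=\epsilon+4+4\epsilon\inv$, and the three opposite‑vertex traces equal $\tfrac{\mu^2+2\mu-4}{\mu-2}=\epsilon+6+4\epsilon\inv$. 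In particular $\Tr(C_iC_j)\in\Z[\epsilon,\epsilon\inv]\subset\mathcal{O}_K$ and $\Tr(C_iC_j)\in\Q(\mu)=K$ for all $i,j$.

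Now, since $\Gamma$ is generated by $C_1,\dots,C_5$ with $\Tr C_i=0$, Proposition \ref{9320.1} reduces the integrality of $\Tr\Gamma$ to that of the numbers $\Tr(C_iC_j)$ and $\Tr(C_iC_jC_k)$; the former are in $\mathcal{O}_K$ by the previous paragraph, and for the latter the Fricke identity for half‑turns
\[\Tr(C_iC_jC_k)^2=4-\Tr(C_iC_j)^2-\Tr(C_jC_k)^2-\Tr(C_iC_k)^2+\Tr(C_iC_j)\Tr(C_jC_k)\Tr(C_iC_k)\]
exhibits $\Tr(C_iC_jC_k)$ as a square root of an algebraic integer, hence itself an algebraic integer; so all traces of $\Gamma$ are algebraic integers. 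Moreover $k\Gamma\ni\Tr\big((C_1C_2)^2\big)=\mu^2-2\notin\Q$, so $\Q(\mu^2)=K\subseteq k\Gamma$. The delicate point is the reverse inclusion $k\Gamma\subseteq K$: the invariant trace field is generated by the traces of $\Gamma^{(2)}$, and one must show that the quantities $\Tr(C_iC_jC_k)$ — which a priori lie in $K$ only up to a multiquadratic extension — do not contribute; equivalently, that the discriminants appearing in the Fricke identity, which by the explicit formulas above are elements of $K$, are squares in $K$. With the highly symmetric hexagon above these discriminants take only a few distinct values (the $\alpha$-equilateral hexagon carries a dihedral symmetry group), and I expect one can force them to be squares by imposing a further finite list of Pell/norm conditions on the unit $\epsilon$, the rank‑$(p-1)$ unit group providing enough room; alternatively one verifies abstractly that for Fuchsian groups of signature $(0;2^6)$ the invariant trace field is generated by the $\Tr(C_iC_j)$ alone. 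Granting this, $\Tr\Gamma\subset K$, so $k\Gamma=K$, which is totally real, and $\Gamma$ is semi‑arithmetic by Definition \ref{DDay19.2}.

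Finally I would pass to genus $g$ exactly as in the proof of Theorem \ref{mainthm}: the surface‑kernel epimorphism $\Gamma\to\Z/2\Z$, $C_i\mapsto\bar1$, has torsion‑free kernel, a surface group of genus $2$, and for $g>2$ one takes a further subgroup of index $g-1$. These are finite‑index subgroups of the semi‑arithmetic group $\Gamma$, hence semi‑arithmetic with integral traces, and — the invariant trace field being a commensurability invariant — have invariant trace field $K$; their quotients of $\Hy$ are the required closed Riemann surfaces of genus $g$ with integral traces and invariant trace field $K$. The whole argument is elementary except for the point flagged in the previous paragraph, namely confirming that $k\Gamma$ equals $K$ rather than a proper multiquadratic extension of it; this is where I would expect to spend the real effort, and it is the reason for building the hexagon to be as symmetric as possible.
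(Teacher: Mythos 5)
Your construction and the explicit trace computations for the equilateral right-angled hexagon are correct, and your endgame (the surface-kernel map to $\Z/2\Z$, passage to genus $g$, and the primality argument $[k\Gamma:\Q]\mid [K:\Q]$) is exactly the paper's. But the proposal has a genuine gap, and it is the one you flag yourself: the inclusion $k\Gamma\subseteq K$ is never proved. Since $k\Gamma=\Q(\Tr\Gamma^{(2)})$, your data only control this field up to the multiquadratic extension generated by the triple-product traces $\Tr(C_iC_jC_k)$ (and by traces of longer products), and nothing you establish rules out $k\Gamma\supsetneq K$; the promised ``Pell/norm conditions on $\epsilon$'' are not exhibited, and the sentence ``Granting this'' concedes precisely the hardest point of the statement. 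There is a second, related gap: Proposition \ref{9320.1} for the five generators $C_1,\dots,C_5$ requires integrality of $t_{i_1,\dots,i_k}$ for \emph{all} subsets, so besides pairs and triples you also need the four- and five-fold products, which your Fricke identity (whose $xyz$-term, incidentally, should carry a minus sign for trace-zero lifts) does not cover; so even $\Tr\Gamma\subset\overline{\Z}$ is not fully established.

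The idea that repairs both defects is the one the paper actually uses, and it is available to you because your hexagon is $\alpha$-equilateral: it admits an order-$3$ rotation, so it is tiled by six trirectangles with acute angle $\pi/3$ and the half-turn group embeds with finite index in the rotation subgroup of the trirectangle reflection group, of signature $(0;2,2,2,3)$. That group is generated by three half-turns $S_1,S_2,S_3$ whose triple product is the order-$3$ elliptic, of trace $1\in\Z$; hence Takeuchi's lemma gives $\Tr\subset\Z[\Tr(S_1S_2),\Tr(S_1S_3),\Tr(S_2S_3)]$ with no square-root ambiguity, and a parity (sign-flip of lifts) argument keeps the traces of $\Gamma^{(2)}$ inside $\Q(\sinh^2 a)=K$, which is what the divisibility step needs. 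This is why the paper's proof of Theorem \ref{20420.1} is a two-line corollary: it cites the construction of Theorems \ref{251019.1} and \ref{mainthm} with $\sinh a\in\cO_K^\times$ chosen so that $\Tr^2\gamma\notin\Q$, uses commensurability invariance of the invariant trace field, and concludes by primality — the same finish you propose, but with the middle step secured by working inside the $(0;2,2,2,3)$ group rather than intrinsically in the $(0;2^6)$ group.
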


\begin{proof}
	Let $K$ be a totally real number field of prime degree $p\geq 3$ and $a$ a positive real number such that $\sinh a \in \cO_K^\times$. For each $g\geq 2$, we can find a semi-arithmetic genus $g$ Riemann surface whose uniformising Fuchsian group $\Delta$ realises $2\cosh 2a = 2 + 4\sinh^2 a$, i.e., there exists some $\gamma \in \Delta$ with $\Tr\gamma = 2 + 4\sinh^2 a$. Note that $a$ may be chosen in such a way that $\Tr^2 \gamma$ is not a rational number so that $k\Delta$ is strictly larger than $\Q$. Since $[k\Delta : \Q]$ divides the prime number $[K:\Q]$, we conclude that  $k\Delta = K$.
\end{proof}

\begin{rmk}
	In particular, there are semi-arithmetic Riemann surfaces of genus $g$ with invariant trace fields of arbitrarily large degree.
\end{rmk}

This gives a negative answer to a conjecture made by B. Jeon (see \cite[Conjectrue 2]{Jeon19}):

\begin{conjecture*}[Jeon]
	For each $g\geq 2$ there exists only a finite number of real number fields and quaternion algebras that are realised as the invariant trace field and invariant quaternion algebra of a hyperbolic structure on $S_g$ with integral traces.
\end{conjecture*}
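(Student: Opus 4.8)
The final displayed statement is the conjecture of Jeon, which this paper \emph{refutes}; accordingly, the natural task is to propose a disproof — i.e. to show that for each fixed $g\ge 2$ there are infinitely many totally real number fields arising as the invariant trace field of a semi-arithmetic genus-$g$ Fuchsian group (equivalently, of a genus-$g$ hyperbolic structure on $S_g$ with integral traces). The plan is to feed well-chosen parameters into the trirectangle/right-angled-hexagon construction of Theorem \ref{251019.1} and Theorem \ref{mainthm}, and then to identify the invariant trace field of the output via a degree argument that exploits primality.

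Fix $g\ge 2$ and let $K$ be any totally real number field of prime degree $p=[K:\Q]\ge 3$; there are infinitely many such fields, for instance the totally real cyclic \emph{simplest cubic fields} $\Q[x]/(x^3-nx^2-(n+3)x-1)$, $n\in\Z$. By Dirichlet's Unit Theorem (Theorem \ref{dirichlet}) the group $\mathcal{O}_K^\times$ has rank $r_1+r_2-1=p-1\ge 2$, and by Corollary \ref{14919.1} it is dense in $\R$; choose a positive unit $u\in\mathcal{O}_K^\times$ with $u\neq 1$. A nontrivial unit of $\mathcal{O}_K$ is not rational and $[\Q(u):\Q]\mid p$, so $\Q(u)=K$; likewise $u^2\notin\Q$, so $\Q(u^2)=K$. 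Let $a>0$ satisfy $\sinh a=u$. Running the construction of Theorem \ref{251019.1} with this $a$ and this $K$ produces a semi-arithmetic Fuchsian group of signature $(0;2,2,2,2,2,2)$, and Theorem \ref{mainthm} promotes it to a semi-arithmetic Fuchsian group $\Delta$ uniformizing a genus-$g$ Riemann surface; $\Delta$ contains the hyperbolic element $\gamma$ (the image of $c_1c_2$) with $\Tr\gamma=2\cosh 2a=2+4\sinh^2 a=2+4u^2$, and by Definition \ref{DDay19.2} it has integral traces with $k\Delta$ totally real.

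It remains to prove $k\Delta=K$. For the lower bound, $\gamma^2\in\Delta^{(2)}$ gives $\Tr(\gamma^2)=(\Tr\gamma)^2-2=(2+4u^2)^2-2\in k\Delta$; since $4u^2+2$ has degree $p$ over $\Q$ (as $u\notin\Q$), so has $(4u^2+2)^2$, whence $K\subseteq k\Delta$. For the upper bound, note $k\Delta$ is a wide-commensurability invariant, so $k\Delta=k\bigl(\rho_{\cH_{2a}}(\Gamma)\bigr)$, and by Proposition \ref{9320.1} the \emph{trace} field of the hexagon group lies in $\Z[\Tr(S_1S_2),\Tr(S_2S_3),\Tr(S_1S_3)]$ (using $\Tr S_1=\Tr S_2=\Tr S_3=0$ and $\Tr(S_1S_2S_3)=\Tr S_4=1$), that is, in the elementary abelian $2$-extension $L:=K\bigl(\sqrt{1+u^2},\sqrt{1+4u^2}\bigr)$ of $K$ (here $\Tr(S_1S_2)=2\cosh a$, $\Tr(S_2S_3)=2\cosh b$, $\Tr(S_1S_3)=2\cosh d$, with $\sinh b=1/(2u)$ and $\cosh d=\cosh a\cosh b$ by \eqref{101019.6}). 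The point is that the hexagon group carries a homomorphism $\chi$ onto $\operatorname{Gal}(L/K)\cong(\Z/2)^2$ — well defined because $\Tr S_1=\Tr S_2=\Tr S_3=0$ makes the defining relations of $\Lambda$ compatible with such a grading — for which $\Tr\eta\in K\cdot\sqrt{d_{\chi(\eta)}}$ with $d_0=1$; since $\Delta^{(2)}\subseteq\ker\chi$ (squares map to $0$), every trace of $\Delta^{(2)}$ lies in $K$, so $k\Delta\subseteq K$. Therefore $\Q\subsetneq k\Delta\subseteq K$ with $[K:\Q]=p$ prime, which forces $k\Delta=K$. Letting $K$ range over the infinitely many totally real fields of prime degree $\ge 3$ while $g$ is held fixed contradicts the finiteness predicted by Jeon.

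The main obstacle is the upper bound $k\Delta\subseteq K$, i.e. verifying the grading: one must write the generators $\rho_{\cH_{2a}}(C_i)$ (equivalently the $S_i$) as matrices over $L$ and check that their entries sit in the cosets of $\operatorname{Gal}(L/K)$ predicted by $\chi$, so that $\Tr$ of any word is $K$-rational times the corresponding $\sqrt{d_\bullet}$; the half-turn hypothesis ($\Tr S_1=\Tr S_2=\Tr S_3=0$) and the right angles of the hexagon (making every $\cosh^2 d_\Hy(E_i,E_j)$ a polynomial in $u^2$ over $\Q$) are exactly what makes this work. Granting this — together with the routine points that $k\Delta$ is totally real, $\Tr\Delta\subset\overline{\Z}$, and $c_1c_2$ survives the Reidemeister step of Theorem \ref{mainthm} with unchanged trace — the degree squeeze completes the disproof.
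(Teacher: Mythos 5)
Your proposal is correct and follows essentially the same route as the paper's own refutation (Theorem \ref{20420.1}): feed units of a totally real field $K$ of prime degree at least $3$ into the trirectangle/hexagon construction of Theorems \ref{251019.1} and \ref{mainthm}, and squeeze the invariant trace field between $\Q$ and $K$ using primality of the degree. The upper bound $k\Delta\subseteq K$ that you leave ``granted'' is precisely the point the paper also treats as immediate (its divisibility assertion), and it does hold: since $\Tr S_1=\Tr S_2=\Tr S_3=0$ and $\Tr(S_1S_2S_3)=1$, the parity-respecting form of the trace identities behind Proposition \ref{9320.1} places the trace of every word in $\ker\chi\supseteq\Delta^{(2)}$ inside $\Z\bigl[\Tr^2(S_1S_2),\Tr^2(S_1S_3),\Tr^2(S_2S_3),\Tr(S_1S_2)\Tr(S_1S_3)\Tr(S_2S_3)\bigr]\subset K$, which is exactly your grading.
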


\section{Semi-arithmetic surfaces with logarithmic systolic growth } \label{18220.1}

In this section, we construct sequences of surfaces of logarithmic systolic growth with multiplicative constant depending only on arithmetic and geometric data. 

\begin{lemma}\label{areagrowth}
Let $S=\Gamma \backslash \Hy$ be a semi-arithmetic Riemann surface and let $K$ be its invariant trace field. Consider the family $\mathcal{F}$ of prime ideals of $\mathcal{O}_K$ given in Theorem \ref{quotientcongruence}. For each $\mathfrak{p} \in \mathcal{F}$ let 
$S_{\mathfrak{p}}=\Gamma(\mathfrak{p}) \backslash \Hy$ be the congruence covering of $S$ of level $\mathfrak{p}$. There exist constants $l=l(S),L=L(S)$ such that
\[lN(\mathfrak{p})^3 \leq \area(S_{\mathfrak{p}}) \leq L N(\mathfrak{p})^3 \]
for every $\mathfrak{p} \in \mathcal{F}$.
\end{lemma}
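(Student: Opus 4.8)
The plan is to compare the area of the congruence covering $S_{\mathfrak p} = \Gamma(\mathfrak p)\backslash\Hy$ with that of the base surface $S = \Gamma\backslash\Hy$ via the index of the congruence subgroup, reducing the problem to the index estimate already available in Corollary \ref{indexgrowth}. By formula \eqref{241019.1}, since $S_{\mathfrak p}$ is a finite covering of $S$, one has $\area(S_{\mathfrak p}) = [\Gamma : \Gamma(\mathfrak p)]\cdot \area(S)$, so it suffices to bound $[\Gamma : \Gamma(\mathfrak p)]$ above and below by constant multiples of $N(\mathfrak p)^3$.

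The key step is to relate $[\Gamma : \Gamma(\mathfrak p)]$ to $[\Gamma^{(2)} : \Gamma^{(2)}(\mathfrak p)]$, for which Corollary \ref{indexgrowth} already gives $\tfrac14 N(\mathfrak p)^3 \le [\Gamma^{(2)} : \Gamma^{(2)}(\mathfrak p)] \le \tfrac12 N(\mathfrak p)^3$ for every $\mathfrak p \in \mathcal F$. First I would record the elementary multiplicativity of indices together with the chain $\Gamma^{(2)}(\mathfrak p) < \Gamma(\mathfrak p) < \Gamma$ and $\Gamma^{(2)} < \Gamma$ established in \S\ref{congruencesub}. Writing $j = [\Gamma : \Gamma^{(2)}]$ (a finite number depending only on $S$) and noting that $\Gamma^{(2)}(\mathfrak p) = \Gamma(\mathfrak p)\cap\Gamma^{(2)}$, the standard index identities give
\begin{align*}
[\Gamma : \Gamma(\mathfrak p)]\cdot[\Gamma(\mathfrak p):\Gamma^{(2)}(\mathfrak p)] = [\Gamma:\Gamma^{(2)}(\mathfrak p)] = [\Gamma:\Gamma^{(2)}]\cdot[\Gamma^{(2)}:\Gamma^{(2)}(\mathfrak p)].
\end{align*}
Since $\Gamma(\mathfrak p)/\Gamma^{(2)}(\mathfrak p)$ injects into $\Gamma/\Gamma^{(2)}$, the factor $[\Gamma(\mathfrak p):\Gamma^{(2)}(\mathfrak p)]$ is a divisor of $j$, hence lies between $1$ and $j$. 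Combining this with Corollary \ref{indexgrowth} yields
\begin{align*}
\frac{1}{4j}N(\mathfrak p)^3 \le [\Gamma:\Gamma(\mathfrak p)] \le \frac{j}{2}N(\mathfrak p)^3,
\end{align*}
and multiplying through by $\area(S) = 2\pi(2g-2+\sum_i(1-1/m_i))$ gives the claimed bounds with $l = l(S) = \tfrac{1}{4j}\area(S)$ and $L = L(S) = \tfrac{j}{2}\area(S)$.

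There is no serious obstacle here: the only point requiring a little care is the bookkeeping with the three nested subgroups, making sure that the ``defect'' factor $[\Gamma(\mathfrak p):\Gamma^{(2)}(\mathfrak p)]$ is controlled uniformly in $\mathfrak p$ — which it is, being a divisor of the fixed index $[\Gamma:\Gamma^{(2)}]$. Everything else is the multiplicativity of the index in a finite covering (formula \eqref{241019.1}) together with the already-proven Corollary \ref{indexgrowth}, so the proof is short and essentially formal.
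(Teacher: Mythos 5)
Your argument is correct and follows essentially the same route as the paper's: multiplicativity of the index applied to the chain $\Gamma^{(2)}(\mathfrak p) < \Gamma(\mathfrak p) < \Gamma$, the injection of $\Gamma(\mathfrak p)/\Gamma^{(2)}(\mathfrak p)$ into $\Gamma/\Gamma^{(2)}$ to bound the defect, and Corollary \ref{indexgrowth} for the $N(\mathfrak p)^3$ growth. (A small remark on bookkeeping: from $[\Gamma:\Gamma(\mathfrak p)] = j[\Gamma^{(2)}:\Gamma^{(2)}(\mathfrak p)]/[\Gamma(\mathfrak p):\Gamma^{(2)}(\mathfrak p)]$ and $[\Gamma(\mathfrak p):\Gamma^{(2)}(\mathfrak p)]\le j$ one actually gets the sharper lower bound $\tfrac14 N(\mathfrak p)^3 \le [\Gamma:\Gamma(\mathfrak p)]$, as the paper records; your $\tfrac{1}{4j}$ is looser but still proves the lemma, since the constants are only required to depend on $S$.)
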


\begin{proof}
Since $S_{\mathfrak{p}}$ is a covering of degree $[\Gamma:\Gamma(\mathfrak{p})]$, we have $\area(S_{\mathfrak{p}})=\area(S)[\Gamma:\Gamma(\mathfrak{p})]$ for any $\mathfrak{p} \in \mathcal{F}$. The equality $$[\Gamma:\Gamma^{(2)}][\Gamma^{(2)}:\Gamma^{(2)}(\mathfrak{p})]=[\Gamma:\Gamma(\mathfrak{p})][\Gamma(\mathfrak{p}):\Gamma^{(2)}(\mathfrak{p})] $$
and the inequality $[\Gamma(\mathfrak{p}):\Gamma^{(2)}(\mathfrak{p})] \leq [\Gamma:\Gamma^{(2)}]$ together with the Corollary \ref{indexgrowth} imply that
\[\frac{1}{4}N(\mathfrak{p})^3 \leq  [\Gamma^{(2)}:\Gamma^{(2)}(\mathfrak{p})] \leq [\Gamma:\Gamma(\mathfrak{p})] \leq [\Gamma:\Gamma^{(2)}][\Gamma^{(2)}:\Gamma^{(2)}(\mathfrak{p})] \leq \frac{1}{2}[\Gamma:\Gamma^{(2)}]  N(\mathfrak{p})^3.\]

The constants $l(S)=\frac{1}{4}\area(S)$ and $L(S)=\frac{1}{2}[\Gamma:\Gamma^{(2)}]\area(S)$ depend only on $S$. We have thus obtained the desired inequality for every $\mathfrak{p}\in\cF$.
\end{proof}

The following theorem is a slight generalisation of Theorem \ref{sys:thm} in the sense that it also includes closed $2$-dimensional hyperbolic orbifolds.

\begin{theorem}
Let $\Gamma < \PSLTR$ be a cocompact semi-arithmetic Fuchsian group and let $K$ be its invariant trace field. Then, for infinitely many prime ideals $\mathfrak{p} \subset \mathcal{O}_K$, the corresponding congruence subgroups $\Gamma(\mathfrak{p}) < \Gamma$ are torsion free and the closed Riemann surfaces $S_\mathfrak{p}=\Gamma(\mathfrak{p})  \backslash \Hy$ satisfy

\[\sys(S_{\mathfrak{p}}) \geq C \log(\area(S_{\mathfrak{p}}))- c,\]
where $C>0, c  \in \R$ are constants that do not depend on $\mathfrak{p}$.
\end{theorem}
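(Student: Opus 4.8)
The plan is to run a Margulis-type argument. Once $\Gamma(\mathfrak p)$ is torsion free, $\sys(S_\mathfrak p)$ equals the translation length of some nontrivial — hence hyperbolic, since $\Gamma$ is cocompact — element $\gamma\in\Gamma(\mathfrak p)$, and I will bound $\ell(\gamma)$ from below in terms of $\log N(\mathfrak p)$ by pitting an arithmetic lower bound for $|N_{K/\Q}((\Tr\gamma)^2-4)|$ against a geometric upper bound for the Galois conjugates of $\Tr\gamma$. Feeding in Lemma \ref{areagrowth}, which gives $\tfrac13(\log\area(S_\mathfrak p)-\log L(S))\le\log N(\mathfrak p)$ for $\mathfrak p\in\cF$, will then yield the stated inequality with $C=C_1/3$, where $C_1$ is the constant from the geodesic bound. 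I also have to check that $\Gamma(\mathfrak p)$ is torsion free for all but finitely many $\mathfrak p\in\cF$: a torsion element of $\Gamma(\mathfrak p)$ comes from a finite-order $D\in\mathfrak O^1(\mathfrak p)$ whose reduced trace is $2\cos(j\pi/m)$ with $m$ among the finitely many cone orders of $\Gamma$, and the congruence $D\equiv\mathbf 1\pmod{\mathfrak p\mathfrak O}$ forces $2\cos(j\pi/m)-2\in\mathfrak p$; since this is a fixed nonzero algebraic integer, only finitely many $\mathfrak p$ are excluded.

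For the arithmetic lower bound, fix $\gamma\in\Gamma(\mathfrak p)$, $\gamma\ne 1$, and write a lift as $\xi(D)$ with $D\in\mathfrak O^1(\mathfrak p)$, so $D-\mathbf 1\in\mathfrak p\mathfrak O$. The reduced trace is $K$-linear and $\mathrm{trd}(\mathfrak O)\subseteq\cO_K$, so $\mathrm{trd}(D)\in\cO_K$ and $\mathrm{trd}(D)-2=\mathrm{trd}(D-\mathbf 1)\in\mathfrak p$. Since $\Tr\gamma=|\mathrm{trd}(D)|$, the element $\beta\defeq(\Tr\gamma)^2-4=(\mathrm{trd}(D)-2)(\mathrm{trd}(D)+2)$ lies in $\mathfrak p\subseteq\cO_K$; moreover $\beta\ne 0$, for $\beta=0$ would give $\Tr\gamma=2$, impossible for a nontrivial element of a parabolic-free group. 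Hence $\mathfrak p\mid(\beta)$ and $N(\mathfrak p)\le|N_{K/\Q}(\beta)|=\prod_{\sigma}|\sigma(\Tr\gamma)^2-4|$, the product ranging over the $n=[K:\Q]$ real embeddings of $K$.

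For the geometric upper bound I would first conjugate $\Gamma$ — which affects neither translation lengths nor $S_\mathfrak p$ — so as to assume $\Gamma<\PSL(2,L)$ for a fixed number field $L\supseteq K$, fix a finite generating set $\Sigma$, and put $M=\max_{s\in\Sigma}\Ht(s)$. By Lemma \ref{milnorlemma} one has $w_\Sigma(\gamma)\le(\ell(\gamma)+c_0)/C_0$ with $C_0>0,c_0$ depending only on $(\Gamma,\Sigma)$; expressing $\gamma$ as a word of that length and using \eqref{prodH} together with $\Ht(s^{-1})=\Ht(s)$ gives $\Ht(\gamma)\le(4M)^{w_\Sigma(\gamma)}$, and then \eqref{sumh}--\eqref{prodh} give $\Ht(\Tr\gamma)\le 4\,\Ht(\gamma)^2$. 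Since $|\sigma(\Tr\gamma)|\le\Ht(\Tr\gamma)^{[L:\Q]}$ for every Galois embedding $\sigma$, this yields $|\sigma(\Tr\gamma)|\le A_1A_2^{\ell(\gamma)}$ with $A_1,A_2$ depending only on $(\Gamma,\Sigma,L)$ and $A_2>1$ (enlarging $A_2$ if necessary). Thus $|\sigma(\Tr\gamma)^2-4|\le A_3A_2^{2\ell(\gamma)}$, so $N(\mathfrak p)\le A_3^nA_2^{2n\ell(\gamma)}$ and therefore $\ell(\gamma)\ge C_1\log N(\mathfrak p)-c_1$ with $C_1=\tfrac{1}{2n\log A_2}>0$. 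As this holds for every nontrivial hyperbolic $\gamma\in\Gamma(\mathfrak p)$, it holds in particular for the one realising the systole, and combining with Lemma \ref{areagrowth} finishes the argument.

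The hard part will be this geometric upper bound: I need a single exponential bound $A_2^{\ell(\gamma)}$, with $A_2$ independent of $\mathfrak p$, valid simultaneously for all Galois conjugates of $\Tr\gamma$, including the identity embedding where $\Tr\gamma=2\cosh(\ell(\gamma)/2)$. This forces me to realise $\Gamma$ over one fixed number field and to verify carefully that the constants produced by Milnor--Schwarz and by the height inequalities \eqref{prodH}, \eqref{sumh}, \eqref{prodh} depend only on $S$. The other delicate point — but a purely formal one — is routing the congruence condition through the reduced trace of the quaternion order, so that $(\Tr\gamma)^2-4$ is genuinely a nonzero element of the prime $\mathfrak p$ of $\cO_K$ rather than merely an algebraic integer; this is exactly what makes the lower bound grow like $\log N(\mathfrak p)$ instead of being vacuous.
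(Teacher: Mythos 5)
Your argument is correct and runs on the same Margulis-type engine as the paper's: an arithmetic lower bound on $\log N(\mathfrak p)$ coming from the congruence condition on the trace, pitted against a geometric upper bound obtained by converting translation length into word length (Lemma~\ref{milnorlemma}) and word length into a height bound via the submultiplicativity~\eqref{prodH}, and then Lemma~\ref{areagrowth}. The differences are minor and all sound. For the torsion-free claim, the paper squeezes it out of the height machinery --- for elliptic $\gamma$ every Galois conjugate of $\Tr\gamma$ has absolute value at most $2$, so $\Ht(\Tr^2\gamma)\le 4$, which is incompatible with the lower bound once $N(\mathfrak p)>8^d$ --- whereas you observe directly that there are only finitely many possible traces $2\cos(j\pi/m)$ for torsion elements and that $\mathfrak p$ would have to divide one of the corresponding fixed nonzero principal ideals; both work, and yours is arguably more transparent. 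For the main inequality, the paper squares the lift $A$ before subtracting the identity, which lands $4-\Tr^2\gamma$ in $\mathfrak p^2$ and thus gives $\Ht(\Tr^2\gamma)\gtrsim N(\mathfrak p)^{2/d}$; you instead factor $(\Tr\gamma)^2-4=(\operatorname{trd} D-2)(\operatorname{trd} D+2)$ and get membership only in $\mathfrak p$, then take the field norm rather than the height. Since the theorem claims an unspecified $C>0$, the lost factor of $2$ is harmless, and using $N_{K\mid\Q}$ rather than $\Ht$ in tandem with the product formula is the same estimate in different clothing. Your bound $\Ht(\Tr\gamma)\le 4\Ht(\gamma)^2$ is weaker than the paper's $\Ht(\Tr\gamma)\le 4\Ht(\gamma)$ (which follows by bounding $\max\{1,v(a+d)\}$ place by place instead of applying~\eqref{sumh} globally), but again only affects the unspecified constants. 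One small virtue of your version is that you route the lift through the maximal order $\mathfrak O$ used in the very definition of $\Gamma(\mathfrak p)$, which sidesteps the paper's slightly informal switch to the order $\mathcal Q$ generated by $\widetilde\Gamma^{(2)}$ and the attendant passage to $\Gamma^{(2)}(\mathfrak p)$; you should still make sure you state explicitly, as the paper does implicitly, that $\operatorname{trd}(D)\in\cO_K$ because $\mathfrak O$ is an $\cO_K$-order and that the enlarged field $L$ with $\Gamma<\PSL(2,L)$ can be taken of degree at most $2$ over $K$ (cf.\ \cite[Corollary 3.2.4]{gtmReid03}) so that the constants genuinely depend only on $S$.
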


\begin{proof}
	Take the family of prime ideals obtained in the Theorem \ref{quotientcongruence}, so that, for any prime ideal $\mathfrak{p}$ in this family, it holds that $\Gamma^{(2)}/\Gamma^{(2)}(\mathfrak{p})$ is isomorphic to $\PSL(2,\mathcal{O}_K/\mathfrak{p} \mathcal{O}_K)$ and, in particular, we may apply Corollary \ref{indexgrowth}.

	Let $D=\{ \sum_j x_j B_j \mid x_j \in K, \, B_j \in \tilde{\Gamma}^{(2)} \}$ be the invariant quaternion algebra of $\Gamma$ and let $\mathcal{Q}=\{ \sum_j \alpha_j B_j \mid \alpha_j \in \mathcal{O}_K , \, B_j \in \tilde{\Gamma}^{(2)} \}$ be an order in $D$. If we consider a nontrivial element $\gamma \in \Gamma(\mathfrak{p})$, then for some preimage $A \in \mathcal{Q}^1$ of  $\gamma$ we may write
	 
	\[A^2 - 1 = \sum_{i} \alpha_i B_i \mbox{ with } \alpha_i \in \mathfrak{p} \mbox{ and } B_i \in \tilde{\Gamma}^{(2)}.\]
	Therefore,
	
	\[\det(A^2-1)=\left(\sum_i \alpha_i B_{i}\right)\left(\displaystyle\sum_{j} \alpha_j B^*_j\right)= \sum_{i} \alpha_i^2 \det(B_i) + \sum_{i<j} \alpha_{i} \alpha_{j} \tr(B_iB^*_j)  \in \mathfrak{p}^{2}.\]
	where the superscript $*$ denotes the usual involution in the quaternion algebra $D$. On the other hand, it follows from elementary identities that $\det(A^2 - 1)=2-\tr(A^2)= 4 - \tr^2(A) = 4 - \Tr^2(\gamma)$. So that $4 - \Tr^2(\gamma) \in \mathfrak{p}^2$.
	
	Note that if $\alpha \in \mathfrak{p}^2$ is nonzero, then $\Ht(\alpha) \ge N(\mathfrak{p})^{\frac{2}{d}}$. Indeed, let $v_\mathfrak{p} \in V_f(K)$ be the valuation corresponding to $\mathfrak{p}$. By definition of $v_\mathfrak{p}$ we have that $|\alpha|_\mathfrak{p} \leq N(\mathfrak{p})^{-2}$. Since $\alpha$ is an algebraic integer, $v(\alpha) \leq 1$ for any $v \in V_f(K)$. So, if we define $V_\alpha(K)=\{v \in V_\infty(K) \mid v(\alpha)>1 \}$, then
	\[\Ht(\alpha)^d=\prod_{v \in V_\alpha(K)}v(\alpha).\]
	The \emph{product formula} (see \cite[Chapter III (1.3)]{Neukirch99}), however, implies that
	\[\prod_{v \in V(K)}v(\alpha)=1,\]
	and therefore 
	
	\begin{align*}
	\Ht(\alpha)^d  =  \left( \prod_{v \in V(K) \setminus V_\alpha(K)} v(\alpha) \right)\inv \geq \vp(\alpha)\inv \geq N(\mathfrak{p})^2.
	\end{align*}
	
	
	
	Returning to the arbitrarily chosen $\gamma \in \Gamma \setminus \{1\}$, we have that $4 - \Tr(\gamma)^2 \neq 0 $ since $\gamma$ is not parabolic. Then,
	\begin{align*}
	  \Ht(4 - \Tr^2(\gamma)) \geq N(\mathfrak{p})^{\frac{2}{d}}.
	\end{align*}
	
	By applying the properties of the height function in Subsection \ref{numbertheory}, we observe that
	\begin{align*}
	\Ht(4 - \Tr^2(\gamma)) \leq 4 \, \Ht(4) \, \Ht(\Tr^2(\gamma)),
	\end{align*}
	which leads to
	\begin{align}\label{heightlowbound}
	\Ht(\Tr^2(\gamma)) \geq \frac{1}{16}N(\mathfrak{p})^{\frac{2}{d}},
	\end{align}
	(note that $\Ht(4) = 4$).
	
	Now, if $\gamma$ is elliptic, then $\Tr(\gamma)=|2\cos(\frac{\pi}{n})|$ where $n$ is the order of $\gamma$. Since $2\cos(\frac{\pi}{n})=\zeta_{2n}+\zeta_{2n}\inv$ where $\zeta_{2n}$ is the $2n$-th primitive root of unit, any Galois conjugate of $2\cos(\frac{2\pi}{n})$ has absolute value at most $2$, i.e. $\Ht(\Tr^2(\gamma)) \leq 4$. Hence, it follows from \eqref{heightlowbound} that, for any prime ideal $\mathfrak{p}$ satisfying $N(\mathfrak{p})>8^{d}$ the congruence subgroup $\Gamma(\mathfrak{p})$ is torsion free. Henceforth, we will only consider prime ideals satisfying this additional property. Note that it remains an infinite family of prime ideals.
	
	In order to estimate from below the displacement of any element in $\Gamma(\mathfrak{p})$ it is sufficient to estimate the displacement of its square. So we will give a lower bound for the displacement of any element in $\Gamma^{(2)}(\mathfrak{p})$. Moreover, we need to relate this lower bound with the growth of the area of the corresponding coverings. By Lemma \ref{milnorlemma} and Lemma \ref{areagrowth}, it is sufficient to show that, for our family of prime ideals $\mathfrak{p}$, any nontrivial element in the corresponding congruence group has word length at least $c_1\log(N(\mathfrak{p}))-c_2$ for some constants $c_1>0$ and $c_2$ that may depend on $\Gamma$ but not on the prime ideal $\mathfrak{p}$. For the remainder of the proof, we will denote such constants indistinctly by $C>0$ and $c$.
	
	
	We want to estimate the word length $w_\Sigma$ of a nontrivial element $\eta \in \Gammat(\mathfrak{p})$, with respect to a canonical set of generators $\Sigma$ of $\Gammat$. There exists a finite extension $L$ of $K$ of degree at most $2$ for which we may suppose that, up to conjugation, $\Gammat \subset \PSL(2,L)$ (cf. \cite[Corollary 3.2.4]{gtmReid03}).  Let $\tau=\max \{\Ht(\sigma) \mid \sigma \in \Sigma\}$. By definition, $\Ht(\sigma)=\Ht(\sigma\inv).$ If $\eta=\sigma^{i_1} \cdots \sigma^{i_w}$ with $i_j \in \Z$ is a word of minimal length $w_\Sigma(\eta)$, then we may apply property \eqref{prodH} of the height function (see Subsection \ref{numbertheory}) successively in order to obtain $\Ht(\eta) \leq (4\tau)^{w_\Sigma(\eta)-1}\tau$, i.e.
	\begin{align} \label{wordlowbound}
		w_\Sigma(\eta) \geq C\log(\Ht(\eta))  - c.
	\end{align}
	It follows also from the definitions that $\Ht(\Tr(\eta)) \leq 4 \Ht(\eta)$. Finally, \eqref{heightlowbound} and \eqref{wordlowbound} together imply that
	
	\[w_\Sigma(\eta) \geq C\log(N(\mathfrak{p})) - c.\]

\end{proof}

Let $S=\Gamma \backslash \Hy$ be a semi-arithmetic hyperbolic surface, and let $K$ be the invariant trace field of $S$. Each Galois embedding $\phi: K \rightarrow \mathbb{R}$  can be extended to a monomorphism $\Phi: \Gamma \rightarrow \PSLTR$ given by applying a suitable  extension of $\phi$ on the entries of the element of $\Gamma$. This map is well defined up to conjugation by elements of $\PGLTR$. There exists a choice of monomorphisms $\Phi_1, \dots, \Phi_r$ with $r \leq [K:\mathbb{Q}]$ such that the map $\Phi=(\Phi_1,\cdots,\Phi_r):\Gamma \rightarrow \PSLTR^r$  
satisfies $\Phi(\Gamma) \subset \Lambda$ where $\Lambda$ is an irreducible arithmetic lattice acting on $\Hy^r$ (see \cite[Remark 4]{SW}).

We say that $S$ \emph{admits modular embedding} if there exists a holomorphic map $F:\Hy \rightarrow \Hy^r$ such that 
$$F(\gamma(z)) = \Phi(\gamma)(F(z)) \mbox{ for all } z \in \Hy \mbox{ and } \gamma \in \Gamma. $$
In this case, the map $F$ induces a $\pi_1$-injective embedding $f: S \rightarrow M$ where $M=\Lambda \backslash \Hy^r$ is an arithmetic orbifold locally isometric to $\Hy^r.$

Now we are ready to prove Theorem \ref{modemb:thm}. We repeat its statement for the convenience of the reader. Recall that $\sys_1$ denotes the 1-systole of a manifold, i.e, the infimum of the lengths of non-trivial closed curves. In the case of surface, we denote its systole simply by $\sys$.

\begin{repthrm}[\ref{modemb:thm}]
	If $S$ is a semi-arithmetic Riemann surface which admits modular embedding into an arithmetic $X_r$-manifold for some $r \ge 1$, then $S$ admits a sequence of congruence coverings $S_i \to S$ of degree arbitrarily large satisfying
	$$\sys(S_i) \geq \frac{4}{3r} \log(\area(S_i))-c,$$
	where the constant $c$ does not depend on the $i$.
\end{repthrm}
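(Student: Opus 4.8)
The plan is to transport the systolic estimate of Theorem~\ref{plinio:appendix} for the ambient arithmetic $X_r$-manifold down to $S$ along the modular embedding; the point is that a holomorphic map $\Hy\to\Hy^r$ is only $\sqrt r$-Lipschitz for the product metric on $X_r$, so precisely one factor of $\sqrt r$ is lost. Keep the notation of the paragraphs preceding the statement: $K$ is the invariant trace field of $S=\Gamma\backslash\Hy$, $\Phi=(\Phi_1,\dots,\Phi_r)\colon\Gamma\hookrightarrow\Lambda$ is the embedding into the irreducible arithmetic lattice $\Lambda<\PSLTR^r$, and $F\colon\Hy\to\Hy^r$ is the modular embedding, so that $F(\gamma z)=\Phi(\gamma)F(z)$ for all $\gamma\in\Gamma$ and $z\in\Hy$. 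Both $\Gamma$ and $\Lambda$ live over the same field $K$, so for a prime ideal $\mathfrak{p}\subset\mathcal{O}_K$ we may form at the same time the congruence covering $S_\mathfrak{p}=\Gamma(\mathfrak{p})\backslash\Hy$ of $S$ (in the sense of \S\ref{congruencesub}) and the principal congruence covering $M_\mathfrak{p}=\Lambda(\mathfrak{p})\backslash\Hy^r$ of $M=\Lambda\backslash\Hy^r$. Following the reduction-mod-$\mathfrak{p}$ condition through $\xi$ and the Galois embeddings defining $\Phi$ one checks that $\Phi(\Gamma(\mathfrak{p}))\subset\Lambda(\mathfrak{p})$; hence $F$ descends to a $\pi_1$-injective map $f_\mathfrak{p}\colon S_\mathfrak{p}\to M_\mathfrak{p}$ with $(f_\mathfrak{p})_*=\Phi|_{\Gamma(\mathfrak{p})}$.

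The first step is the Lipschitz comparison of systoles. Each coordinate $F_i\colon\Hy\to\Hy$ of $F$ is holomorphic, hence distance non-increasing by the Schwarz--Pick lemma, so $\|dF_z(v)\|^2=\sum_{i=1}^r\|d(F_i)_z(v)\|^2\le r\|v\|^2$ and $f_\mathfrak{p}$ is $\sqrt r$-Lipschitz. Let $c$ be a closed geodesic on $S_\mathfrak{p}$, carried by a hyperbolic $\gamma\in\Gamma(\mathfrak{p})\setminus\{1\}$. Then $f_\mathfrak{p}(c)$ is a loop in $M_\mathfrak{p}$ freely homotopic to $\Phi(\gamma)\neq1$, hence noncontractible, so its length is at least $\sys_1(M_\mathfrak{p})$; on the other hand that length is at most $\sqrt r\,\ell(c)$. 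Taking the infimum over $c$ gives
\begin{equation*}
\sys(S_\mathfrak{p})\ \ge\ \frac{1}{\sqrt r}\,\sys_1(M_\mathfrak{p}).
\end{equation*}

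The second step is the area/volume comparison. By Lemma~\ref{areagrowth}, $\area(S_\mathfrak{p})\asymp N(\mathfrak{p})^3$ for $\mathfrak{p}$ in Kucharczyk's family $\mathcal{F}$, and by the same count underlying Corollary~\ref{indexgrowth} (strong approximation for $\mathfrak{O}^1$) one has $[\Lambda:\Lambda(\mathfrak{p})]=|\PSL(2,\mathcal{O}_K/\mathfrak{p})|\asymp N(\mathfrak{p})^3$, so $\vol(M_\mathfrak{p})=\vol(M)[\Lambda:\Lambda(\mathfrak{p})]\asymp N(\mathfrak{p})^3$ as well, with implied constants depending only on $S$. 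Hence $\log\vol(M_\mathfrak{p})=\log\area(S_\mathfrak{p})+O(1)$. Now take $\mathfrak{p}$ ranging over an infinite family of primes of $\mathcal{O}_K$ lying in $\mathcal{F}$ and for which $M_\mathfrak{p}$ realises the estimate of Theorem~\ref{plinio:appendix}; combining the displays,
\begin{align*}
\sys(S_\mathfrak{p})\ \ge\ \frac{1}{\sqrt r}\,\sys_1(M_\mathfrak{p})
&\ \ge\ \frac{1}{\sqrt r}\Big(\frac{4}{3\sqrt r}\log\vol(M_\mathfrak{p})-c_0\Big)\\
&\ =\ \frac{4}{3r}\log\vol(M_\mathfrak{p})-\frac{c_0}{\sqrt r}\ \ge\ \frac{4}{3r}\log\area(S_\mathfrak{p})-c,
\end{align*}
and since $[\Gamma:\Gamma(\mathfrak{p})]\to\infty$ along this family the coverings $S_\mathfrak{p}\to S$ have arbitrarily large degree, which is the assertion of the theorem. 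When $r=1$, $F$ is $1$-Lipschitz and one recovers the bound of Example~\ref{040320.1}(ii).

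The hard part will be the bookkeeping that makes the two congruence structures compatible, i.e.\ the claim $\Phi(\Gamma(\mathfrak{p}))\subset\Lambda(\mathfrak{p})$: it is precisely what forces $\Phi(\gamma)$ to be a \emph{nontrivial} element of $\Lambda(\mathfrak{p})$, hence $\ell(f_\mathfrak{p}(c))\ge\sys_1(M_\mathfrak{p})$, and verifying it requires tracking how the level-$\mathfrak{p}$ condition on a maximal order in the invariant quaternion algebra of $\Gamma$ transports through $\xi$ and the Galois embeddings, together with the usual care about $\Gamma$ versus $\Gamma^{(2)}$. A lesser but genuine point is that one must intersect two infinite families of primes of $\mathcal{O}_K$ --- Kucharczyk's family from Theorem~\ref{quotientcongruence} and the one underlying Theorem~\ref{plinio:appendix} --- and check the intersection is still infinite; since both are cut out by Chebotarev-type splitting conditions they have positive density, so this is automatic. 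The remaining ingredients --- Schwarz--Pick, the index computations, and the descent of $F$ --- are routine.
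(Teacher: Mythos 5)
Your proof follows the paper's strategy closely: Schwarz--Pick gives the $\sqrt r$-Lipschitz bound for the modular embedding $F$, the systolic estimate for the ambient arithmetic $X_r$-manifold is pulled back through $F_\mathfrak{p}$, and Kucharczyk's family (via Lemma~\ref{areagrowth}) converts $\log N(\mathfrak p)$ to $\log\area(S_\mathfrak p)$. The one real difference is bookkeeping: the paper invokes Proposition~\ref{systvsnorm} directly, which yields $\sys_1(\Lambda(I)\backslash\Hy^r)\ge\frac{4}{\sqrt r}\log N(I)-c_1$ for \emph{every} ideal of sufficiently large norm; this dispenses both with your extra $\log\vol(M_\mathfrak p)$-versus-$\log\area(S_\mathfrak p)$ conversion and with the need to argue that the two infinite families of primes have infinite intersection. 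Your Chebotarev-density justification for that intersection is the one shaky step, since Theorem~\ref{plinio:appendix} as stated is merely existential and the family it produces is actually ``all $I$ with $N(I)$ large,'' not a splitting-type condition --- you would want to cite the underlying Proposition~\ref{systvsnorm} as the paper does. A smaller difference: you pass via noncontractibility of $f_\mathfrak p(c)$ and the definition of $\sys_1$ (valid since a torsion-free cocompact lattice in $\PSLTR^r$ contains only hyperbolic translations), whereas the paper checks directly that $\Phi(\gamma)$ is a hyperbolic translation by a Galois-conjugate-of-trace argument; both are correct. Finally, both you and the paper assert without proof that $\Phi(\Gamma(\mathfrak p))\subset\Lambda(\mathfrak p)$ so that $F$ descends --- you at least flag this explicitly as the compatibility that must be tracked through $\xi$ and the Galois embeddings.
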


\begin{proof}
Let $S=\Gamma \backslash \Hy$ be a closed semi-arithmetic surfaces admitting a modular embedding, i.e., there exists an irreducible arithmetic lattice $\Lambda < \PSLTR^r$ defined over the invariant trace field $K$ of $\Gamma$ and a holomorphic map $F: \Hy \rightarrow \Hy^r$ such that the monomorphism  $\Phi=(\Phi_1,\cdots,\Phi_r):\Gamma \rightarrow \PSLTR^r$ satisfies $\Phi(\Gamma) \subset \Lambda$ and $$F(\gamma(z)) = \Phi(\gamma)(F(z)) \mbox{ for all } z \in \Hy. $$

We may suppose (after possibly replacing $\Lambda$ and $\Gamma$ by finite index subgroups) that $\Lambda$ is contained in an arithmetic lattice arising from a quaternion algebra.
Then, if the norm of the ideal $I  \subset \mathcal{O}_K$ is sufficiently large, by Proposition \ref{systvsnorm} the subgroup $\Lambda(I)$ must be torsion free and 
\begin{equation} \label{sysgrowr}
\sys_1(\Lambda(I) \backslash \Hy^r) \ge \frac{4}{\sqrt{r}}\log(\mathrm{N}(I))-c_1,
\end{equation}
for some constant $c_1>0$ which does not depend on $I$.

By the Schwarz-Pick Lemma, any holomorphic map is $1$-Lipschitz, with respect to the hyperbolic metric. Therefore, the induced $\pi_1-$injective map $F:S \rightarrow \Lambda \backslash \Hy^r$ is $\sqrt{r}$-Lipschitz.

Now we consider the family $\mathcal{F}$ of prime ideals of $\mathcal{O}_K$ given in the Theorem \ref{quotientcongruence}. For each ideal $\mathfrak{p} \in \mathcal{F}$ take the congruence subgroup of $\Gamma(\mathfrak{p})$. Then the map $F$ induces an embedding $F_{\mathfrak{p}}: \Gamma(\mathfrak{p}) \backslash \Hy \rightarrow \Lambda(\mathfrak{p}) \backslash \Hy^r$ which is also $\sqrt{r}$-Lipschitz. We want to compare the systoles of these manifolds, but some care needs to be taken here. If $\gamma \in \Gamma(\mathfrak{p})$ is hyperbolic we must first guarantee that $\Phi(\gamma)$ is a hyperbolic translation in $\Lambda$  (see Appendix \ref{pliniogeral}), i.e. that no Galois conjugate of $\Tr(\gamma)$ is equal to $\pm2$. Indeed, if that was the case, then all Galois conjugates of $\Tr(\gamma)$ would be equal to $\pm 2$ and $\gamma$ would be parabolic, contrary to our assumptions. Thus, we have the following inequality:
\[\sys_1(\Lambda(\mathfrak{p}) \backslash \Hy^r) \leq \sqrt{r} ~ \sys(\Gamma(\mathfrak{p}) \backslash \Hy) ,\]
which, together with \eqref{sysgrowr}, implies that
\[\sys(\Gamma(\mathfrak{p}) \backslash \Hy) \ge \frac{4}{r}\log(\mathrm{N}(\mathfrak{p}))-c_2\]
for every $\mathfrak{p}$ contained in an infinite subfamily $\mathcal{F}' \subset \mathcal{F}$, where $c_2>0$ does not depend on $\mathfrak{p}$.

By Lemma \ref{areagrowth}, if we define the congruence coverings $S_{\mathfrak{p}}=\Gamma(\mathfrak{p}) \backslash \Hy$ for each $\mathfrak{p} \in \mathcal{F}'$, we conclude that $\area(S_{\mathfrak{p}}) \to \infty$ when $N(\mathfrak{p}) \to \infty$ and

\[\sys(S_{\mathfrak{p}}) \ge \frac{4}{3r}\log(\area(S_{\mathfrak{p}}))-c\]
for some constant $c>0$ that does not depend on $\mathfrak{p}$.
\end{proof}

\appendix
\section{}\label{procedemeister}

In this appendix we use the Reidemeister process in order to give a standard presentation for the group $\mathcal{K} \unlhd \Gamma$ in terms of the generators of $\Gamma$, as defined in Section \ref{densetoflengths}. For more information on the Reidemeister process we refer the reader to \cite{Bogopolski08}. Recall that
\begin{align*}
\Gamma=	\langle c_1,\dots, c_6 \mid c_1^2 = \dots = c_6^2 = c_1\cdots c_6 = 1\rangle.
\end{align*}
Let $\theta\!:\Gamma \to \Z/2\Z$ be the epimorphism defined by $c_i \mapsto 1$, $i=1,\dots,6$ and define $\mathcal{K}:=\ker\theta \unlhd \Gamma$.

Let  $\phi\!: F_6=F(x_1,\dots,x_6) \to \Gamma$ be an epimorphism from the free group of rank $6$ with generators $\{x_1,\dots, x_6 \}$ onto $\Gamma$ given by $\phi(x_j)=c_j$, $j=1,\dots,6$. Finally, let $\widetilde{\cK}$ be the pre-image of $\mathcal{K}$ with respect to $\phi$. Since $[\Gamma : \mathcal{K}] = 2$ and $\phi$ is surjective, it follows that $\widetilde{\cK}$ also has index 2 in $F_6$. We pick the set $\fT = \{1, x_1\}$ as a Schreier transversal for $\widetilde{\cK}$ in $F_6$. As usual, for any $g\in F_6$ we denote by $\overline{g}$ the (unique) element in $\fT$ with the property that $\widetilde{\cK} g = \widetilde{\cK} \overline{g}$. Then, by the Reidemeister process, the following elements generate the free group $\widetilde{\cK}$:
\begin{align*}
1\cdot x_i \cdot (\overline{1\cdot x_i})\inv = x_ix_1\inv , \quad i=2,\dots,6 ;\\
x_1x_j \cdot (\overline{x_1x_j})\inv = x_1x_j , \quad j=1,\dots, 6.
\end{align*}
Let us rename this generators as:
\begin{align}\label{gen:xtoy}
y_j = x_1x_j,\ j=1,\dots,6 \quad \text{and} \quad y_{5+i} = x_ix_1\inv,\ i=2\dots,6.
\end{align}

In order to find the defining relations we rewrite the words $trt\inv$, where $t\in\{1, x_1 \}$ and $r\in\{x_1^2,\dots,x_6^2, \,  x_1\cdots x_6 \}$, in terms of $\{y_1,\dots,y_{11} \}$:

\begin{equation*}
\setlength{\jot}{20pt}
\begin{split}
&x_j^2 = (x_jx_1\inv)(x_1x_j) =
\begin{cases}
y_1 , &\text{ if } j=1;\\
y_{5+j}\,y_j, &\text{ if } j=2,\dots,6;
\end{cases} \\
&x_1\cdots x_6 = (x_1x_2)(x_3x_1\inv)(x_1x_4)(x_5x_1\inv)(x_1x_6) = y_2 \, y_8 \, y_4 \, y_{10} \, y_6 \, ;\\
&x_1x_j^2x_1\inv = (x_1x_j)(x_jx_1\inv) =
\begin{cases}
y_1 , &\text{ if } j=1;\\
y_j \, y_{5+j}, &\text{ if } j=2,\dots,6;
\end{cases}\\
&x_1(x_1\cdots x_6)x_1\inv = x_1^2(x_2x_1\inv)(x_1x_3)(x_4x_1\inv)(x_1x_5)(x_6x_1\inv) = y_1 \, y_7 \, y_3 \, y_9 \, y_5 \, y_{11}.
\end{split}
\end{equation*}

Note that we may eliminate the generators $y_1, \, y_7, \, y_8, \, y_9, \, y_{10}, \, y_{11}$ and obtain the following presentation:

\begin{align}\label{31120.2}
\cK = \langle  y_2, y_3, y_4, y_5, y_6 \mid y_2 \, y_3\inv \, y_4 \, y_5\inv \, y_6 \ , \  y_2\inv \, y_3 \, y_4\inv \, y_5 \, y_6\inv \rangle.
\end{align}

In order to make it less cumbersome, let us once again rename the generators, now as

\begin{align*}
y_2 =: a, \ y_3\inv =: b, \ y_4 =: c, \ y_5\inv =: d.
\end{align*}

The two relations in \eqref{31120.2} imply that 
\begin{align*}
&y_6=y_2\inv\,y_3\,y_4\inv\,y_5 = (y_5\inv \, y_4 \, y_3\inv \, y_2)\inv;\\
&y_2 \,y _3\inv \, y_4 \, y_5\inv \, y_6 = 1.
\end{align*}
which, in terms of the new names introduced above, yield the following presentation
\begin{align*}
\cK = \langle a,b,c,d \mid abcd(dcba)\inv \rangle.
\end{align*}

Finally, we achieve the standard presentation by making one last change in the generators of $\cK$:
\begin{equation*}
x:= ab^{-2}, ~ y:= b, ~ x':= bac, ~ y':= dc.
\end{equation*}

Note that $\{x,y,x',y'\}$ generates $\mathcal{K}$ indeed, since
\begin{align*}
x\,y^2 = a,~ y= b,~ y^{-2}\,x^{-1}\,y^{-1}\,x'= c,~ y'\,(x')\inv\, y\,x\,y^2 = d.
\end{align*}
With this new set of generators, the relation 	$abcd(dcba)\inv=1$ becomes $[x,y][x',y']=1$, i.e.
\[\mathcal{K}= \langle x,y,x',y' \mid [x,y][x',y']\rangle.\]

By the definition of the homomorphism $\phi$ and \eqref{gen:xtoy}, we conclude the appendix pointing out that:
\begin{align*}
c_1c_2 = \phi(x_1x_2) = \phi(y_2) \in \cK
\end{align*}

\section{}\label{pliniogeral}
In this appendix, we prove the following theorem. 

\begin{theorem} \label{sysvsvol}
Let $M$ be a closed arithmetic $X_r$-orbifold for some $r \ge 1$, defined over a number field $K$. There exists a finite covering $M' \to M$ such that for all but finitely many ideals $I$ of $\mathcal{O}_K$, the congruence covering $M'_I \to M'$ is a manifold. Furthermore, $\vol(M'_I) \to \infty$ when $N(I) \to \infty$ and $$\sys_1(M'_I) \ge \frac{4}{3\sqrt{r}}\log(\vol(M'_I)) -c,$$
where $c$ does not depend on $I$.
\end{theorem}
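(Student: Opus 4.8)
The plan is to adapt to the closed case Murillo's treatment of the noncompact case in \cite{Murillo17}; the only place where compactness genuinely enters is that the quaternion algebra underlying $M$ is a division algebra, and this is exactly what produces the torsion control and the displacement bound. First I would record the arithmetic structure of $M=\Lambda\backslash\Hy^r$: by the classification of irreducible arithmetic lattices in $\PSLTR^r$ there are a totally real field $K$ with $n:=[K:\Q]\ge r$ real places $\sigma_1,\dots,\sigma_n$, a quaternion algebra $B/K$ split at $\sigma_1,\dots,\sigma_r$ and ramified at the remaining real places, and a maximal order $\mathfrak{O}\subset B$, with $\Lambda$ commensurable with the image $P\mathfrak{O}^1$ of $\mathfrak{O}^1$ in $\PSLTR^r$. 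As $M$ is closed, $B\ne\mathrm{M}(2,K)$, so $B$ is a division algebra; moreover at each ramified real place $\sigma_k$ the algebra $B\otimes_{\sigma_k}\R$ is the division algebra of Hamilton quaternions, so its norm-one units form a compact group and $|\sigma_k(\mathrm{Trd}\,x)|\le 2$ for every $x\in\mathfrak{O}^1$. I would then let $M'\to M$ be the finite covering attached to $\Lambda':=\Lambda\cap P\mathfrak{O}^1$, and for an ideal $I\subset\mathcal{O}_K$ put $M'_I:=\Lambda'(I)\backslash\Hy^r$ with $\Lambda'(I):=\Lambda'\cap P\mathfrak{O}^1(I)$ and $\mathfrak{O}^1(I)=\{x\in\mathfrak{O}^1\mid x-1\in I\mathfrak{O}\}$.

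The core of the argument is a lower bound for the displacement of any nontrivial $\gamma\in\Lambda'(I)$. Lifting $\gamma$ to $\mathfrak{O}^1(I)$ and writing $\gamma-1=\sum_j\alpha_j\omega_j$ with $\alpha_j\in I$, $\omega_j\in\mathfrak{O}$, the fact that $\mathrm{Nrd}$ is a quadratic form over $\mathcal{O}_K$ gives $\mathrm{Nrd}(\gamma-1)\in I^2$; and since $B$ is a division algebra, $\gamma\ne 1$ forces $\gamma-1$ to be invertible, hence $\mathrm{Nrd}(\gamma-1)\ne 0$. Because $\mathrm{Nrd}(\gamma)=1$, one computes $\mathrm{Nrd}(\gamma-1)=2-\mathrm{Trd}(\gamma)$, so $2-\mathrm{Trd}(\gamma)\in I^2\setminus\{0\}$; this is sharper than the estimate $4-\mathrm{Trd}(\gamma)^2\in I^2$ used in Section \ref{18220.1} and is what yields the precise constant. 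Taking the norm down to $\Q$, $\prod_{j=1}^n|\sigma_j(2-\mathrm{Trd}\,\gamma)|\ge N(I)^2$. Each ramified place contributes a factor $\le 2+2=4$; at a split place $\sigma_j$, writing $\gamma_j\in\PSLTR$ for the corresponding component and $\ell_j$ for its translation length, Proposition \ref{31120.1} gives $|\sigma_j(2-\mathrm{Trd}\,\gamma)|=|2-\Tr\gamma_j|\le 2+2\cosh(\ell_j/2)\le 4e^{\ell_j/2}$. Hence $N(I)^2\le 4^{n}\,e^{\frac12\sum_{j=1}^r\ell_j}$, i.e. $\sum_{j=1}^r\ell_j\ge 4\log N(I)-2n\log 4$. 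The translation length of $\gamma$ on $\Hy^r$ with its product metric equals $(\sum_j\ell_j^2)^{1/2}\ge r^{-1/2}\sum_j\ell_j$ by Cauchy--Schwarz, and since $M'_I$ is closed and nonpositively curved every free homotopy class of loops is represented by a closed geodesic of that length, so this quantity minimised over $\gamma\ne 1$ is $\sys_1(M'_I)$; therefore $\sys_1(M'_I)\ge \tfrac{4}{\sqrt r}\log N(I)-c_1$, with $c_1$ depending only on $M$.

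Two routine points complete the proof. Torsion: if a nontrivial $\gamma\in\mathfrak{O}^1(I)$ had finite order it would be elliptic at every split place, so $|\sigma_j(\mathrm{Trd}\,\gamma)|\le 2$ for all $j$; as above $2-\mathrm{Trd}\,\gamma\ne 0$, so $N(I)^2\le\prod_j(2+|\sigma_j(\mathrm{Trd}\,\gamma)|)\le 4^n$, forcing $N(I)\le 2^n$. Thus $\mathfrak{O}^1(I)$, and a fortiori $\Lambda'(I)$, is torsion-free whenever $N(I)>2^n$, so $M'_I$ is then a manifold. Volume: $\vol(M'_I)=[\Lambda':\Lambda'(I)]\vol(M')$ with $[\Lambda':\Lambda'(I)]\le[P\mathfrak{O}^1:P\mathfrak{O}^1(I)]$, and the latter index is comparable to $N(I)^3$ uniformly in $I$ (away from the finitely many primes ramified in $B$ it equals $\tfrac12|\SL(2,\mathcal{O}_K/I)|=\tfrac12 N(I)^3\prod_{\mathfrak{p}\mid I}(1-N(\mathfrak{p})^{-2})$, and the product lies between $1/\zeta_K(2)$ and $1$); moreover $[\Lambda':\Lambda'(I)]\ge[P\mathfrak{O}^1:P\mathfrak{O}^1(I)]/[P\mathfrak{O}^1:\Lambda']\to\infty$, so $\vol(M'_I)\to\infty$ and $\log N(I)\ge\tfrac13\log\vol(M'_I)-c_2$. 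Combining with the displacement bound gives $\sys_1(M'_I)\ge\tfrac{4}{3\sqrt r}\log\vol(M'_I)-c$ with $c$ independent of $I$.

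The step I expect to be most delicate is the uniform bookkeeping around the above rather than any individual estimate: choosing $M'$ so that the ``congruence coverings'' in the statement are exactly the $\Lambda'(I)\backslash\Hy^r$; confirming that $[\Lambda':\Lambda'(I)]\asymp N(I)^3$ for \emph{all but finitely many} ideals $I$, including those divisible by primes ramified in $B$ or by primes dividing $[P\mathfrak{O}^1:\Lambda']$; and verifying that every constant ($c_1$, $c_2$, $c$, and the threshold $2^n$) depends only on $M$. This is precisely where the clause ``for all but finitely many $I$'' and the exact value $\tfrac{4}{3\sqrt r}$ have to be justified.
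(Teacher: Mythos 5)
Your proposal is correct and follows essentially the same route as the paper's proof in Appendix \ref{pliniogeral}: reduce to a quaternion algebra $B$ over a totally real field with order $\mathcal{Q}$, show $\mathrm{Trd}(\gamma)-2\in I^2\setminus\{0\}$ for nontrivial $\gamma\in\mathcal{Q}^1(I)$, bound the ramified/elliptic Archimedean factors by $4$ and the hyperbolic ones by $4e^{\ell_j/2}$, take the field norm, apply Cauchy--Schwarz (the paper phrases this as convexity of the quadratic), and combine with the index growth $[\mathcal{Q}^1:\mathcal{Q}^1(I)]\asymp N(I)^3$. The two small places where you are cleaner than the paper --- using $\mathrm{Nrd}(\gamma-1)=2-\mathrm{Trd}\gamma$ directly rather than expanding in a standard basis, and invoking that $B$ is a division algebra to see $\mathrm{Nrd}(\gamma-1)\ne 0$ rather than excluding parabolics by cocompactness, and bounding $[\Lambda':\Lambda'(I)]$ from below to get $\vol\to\infty$ rather than the paper's injectivity-radius argument --- are stylistic variants of the same steps, not a different method.
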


Note that Theorem \ref{plinio:appendix} follows immediately. Theorem \ref{sysvsvol} generalises the noncompact case proved by P. Murillo in \cite{Murillo17}. In fact, we follow his idea and refer to [op. cit] for more details.

\subsection{The geometric structure}
Let $\Hy^r$ be the product space of $r$ copies of the hyperbolic plane $\Hy$ endowed with the product metric. The space $\Hy^r$ is a simply connected symmetric Riemannian manifold of nonpositive curvature. Consider the group $G=\PSL(2,\mathbb{R})^r$ acting on $\Hy^r$ in the following way: given a point $z=(z_1,\ldots,z_r) \in \Hy^r$ and $g=(g_1,\ldots,g_r) \in G$, define $g(z)=(g_1(z_1),\ldots,g_r(z_r))$. Note that this action preserves the Riemannian metric on $\Hy^r$. Therefore, $G$ is a group of isometries of $\Hy^r$ whose action is continuous and transitive.

For any $g \in G$ nontrivial we define its displacement $\ell(g)$ by $$\ell(g)=\inf\{\mathrm{d}((g_1(z_1),\ldots,g_r(z_r)),(z_1,\ldots,z_r)) ~|~ (z_1,\ldots,z_r) \in \Hy^r \},$$
where $$\mathrm{d}((x_1,\ldots,x_r),(y_1,\ldots,y_r)):=\displaystyle \sqrt{d_\Hy(x_1,y_1)^2+\cdots+d_\Hy(x_r,y_r)^2}.$$

We classify the nontrivial elements of $G$ in terms of their geometric action. 
\begin{enumerate}
    \item We say that $g$ is \emph{elliptic} if there exists $z=(z_1,\ldots,z_n) \in \Hy^r$ such that $g(z)=z$;
    \item We say that $g$ is \emph{parabolic} if $g$ is not elliptic and $\ell(g)=0;$
    \item We say that $g$ is \emph{hyperbolic} if $\ell(g)>0;$ 
    \item We say that a hyperbolic element $g \in G$ is a \emph{hyperbolic translation} if there exists a complete geodesic $\xi:\mathbb{R} \rightarrow \Hy^r$ with unit speed and such that $$g(\xi(t))=\xi(t+\ell(g)) \mbox{ for all } t \in \mathbb{R}.$$
    In this case, we say that the geodesic $\xi$ is the \emph{axis} of $g$. It is worth noting that every component of a hyperbolic translation is either elliptic or hyperbolic.
\end{enumerate}

If $\Gamma < G$ is a discrete group, then the action of $\Gamma$ in $\Hy^r$ is properly discontinuous. In order to produce a manifold covered by $\Hy^r$ it is necessary and sufficient that $\Gamma$ is torsion free (i.e., $\Gamma$ does not contain elliptic elements). In general, if $\Gamma$ is discrete, the quotient space $\Gamma \backslash \Hy^r$ is an $r-$dimensional orbifold locally isometric to $\Hy^r$. 

If $g \in \Gamma$ is a hyperbolic translation, then its axis projects in the orbifold $\Gamma \backslash \Hy^r$ as a closed geodesic of length $\ell(g)$. Conversely, any closed geodesic on $\Gamma \backslash \Hy^n$ is obtained in this manner. We define the systole of an orbifold $M$ as the minimum of the set of lengths of all closed geodesics in $M$. This number is denoted by $\sys_1(M)$.  

For any hyperbolic translation $g=(g_1,\ldots,g_r) \in G$ we can write $\{1,\cdots,r\}=H(g) \cup E(g)$, where $|\tr(g_h)|>2$ if $h \in H(g)$ and $|\tr(g_e)|<2$ if $e \in E(g)$. In this case we have (cf. Proposition \ref{31120.1})
\begin{equation}\label{dispform}
\ell(g)=2\sqrt{ \sum_{h \in H(g)} \left[\cosh^{-1}\left(\frac{|\tr(g_h)|}{2} \right)\right]^2}.    
\end{equation}

\subsection{The arithmetic structure} \label{aritructure}
The cocompact irreducible arithmetic lattices in $G$ are constructed as follows. Let $K$ be a totally real number field of degree $d$ and let $A$ be a quaternion algebra over $K$ that splits at exactly $r<d$ Archimedean places of $K$. There exists a surjective algebra morphism $\rho:A \otimes_K \mathbb{R} \rightarrow \mathrm{M}_2(\mathbb{R})^r$ with compact kernel isomorphic to the product of $d-r$ copies of the Hamiltonian algebra. Let $\mathcal{Q}$ be an order of $A$ and consider the subgroup $\mathcal{Q}^1$ of elements in $\mathcal{Q}$ of reduced norm 1. The image $\rho(\mathcal{Q}^1) < G$ is a cocompact irreducible arithmetic lattice. 
Conversely, by Margulis Arithmeticity Theorem, if $\Gamma < G$ is a cocompact irreducible lattice, then $\Gamma$ is commensurable with $\rho(\mathcal{Q}^1)$ for some $K,A,\rho$ and $\mathcal{Q}$ as above (cf. \cite{Bor81}).
Once these parameters are fixed, we define what it means to be a congruence covering in this setting. 

Choose a standard basis $1,\iota,j,\kappa$ for the quaternion algebra $A$ over $K$, i.e., a basis such that $\iota^2=a, \, j^2=b, \, \iota j=\kappa = -j\iota$ with $a,b \in \mathcal{O}_K\minus\{0\}$, and fix the order $$\mathcal{Q}=\{\alpha_0+\alpha_1\iota +\alpha_2j+\alpha_3\kappa \in A \mid \alpha_i \in \mathcal{O}_K\}.$$
With such basis we recall that the \emph{trace} and \emph{reduced norm} of an element  $\alpha=\alpha_0+\alpha_1\iota +\alpha_2j+\alpha_3\kappa \in A$ are given by $$\tr(\alpha)=2\alpha_0 \quad \mbox{ and } \quad \mathrm{n}(\alpha)=\alpha_0^2
-a\alpha_1^2-b\alpha_2^2+ab\alpha_3^2.$$

For $i=1,\ldots,r$, let $\sigma_i:K \rightarrow \mathbb{R}$ denote an embedding of $K$ over which $A$ splits. We may assume, without loss of generality, that $\sigma_1=\mathrm{id} : K \rightarrow \mathbb{R}$ is the trivial inclusion of $K$ in $\mathbb{R}$. Consider $\rho:A \otimes_K \mathbb{R} \rightarrow \mathrm{M}_2(\mathbb{R})^r$ with components $\rho=(\rho_1,\ldots,\rho_r)$ satisfying 
\begin{equation}\label{tracesrelation}
\tr(\rho_i(\alpha))=\sigma_i(\tr(\alpha)), \quad \mbox{ for each } i=1,\ldots,r.  
\end{equation}

Let $\tau_1,\cdots,\tau_l:K \rightarrow \mathbb{R}$ be the embeddings at which $A$ ramifies, i.e., for each $j=1,\ldots,l$ the quaternion algebra $\left(\frac{\tau_j(a) \, , \, \tau_j(b)}{\mathbb{R}} \right)$ satisfies $\tau_j(a),\tau_j(b)<0$ (in other words, it is isomorphic to the division algebra $\cH$ of Hamilton's quaternions). If $\alpha \in \mathcal{Q}^1$ and $\alpha \neq \pm 1$, then applying these embeddings to the equation $1=\alpha_0^2 -a\alpha_1^2-b\alpha_2^2+ab\alpha_3^2,$ yields

\[\tau_j(\alpha_0^2)<1 \mbox{ for any } k=1,\ldots,l. \]

Therefore,

\begin{equation}\label{taus}
|\tau_j(\tr(\alpha))| = 2|\tau_j(\alpha_0)| \leq 2,     
\end{equation}

for all $j=1,\ldots,l$ and $\alpha \in \mathcal{Q}^1.$

For any ideal $I \subset \mathcal{O}_K$ we define the following group

\[ \mathcal{Q}^1(I)=\{\alpha=\alpha_0+\alpha_1\iota+\alpha_2j+\alpha_2\kappa \in \mathcal{Q}^1 \mid \alpha_0-1,\alpha_1,\alpha_2,\alpha_3 \in I \}.\]

The group $\mathcal{Q}^1(I)$ is the congruence subgroup of $\mathcal{Q}^1$ of level $I$. We can now prove the following useful proposition.

\begin{prop}\label{systvsnorm}
Consider $K,A,\rho, \mathcal{Q}$ as above and let $I \subset \mathcal{O}_K$ be an ideal with norm $N(I)$ satisfying $\mathrm{N}(I) \geq 2^d$. Then $\rho(\mathcal{Q}^1(I))$ is a torsion free lattice. Furthermore, we have the following estimate for the systole of the manifold $M_I=\rho(\mathcal{Q}^1(I)) \backslash \Hy^r$ :

\[\sys(M_I) \ge \frac{4}{\sqrt{r}}\log(\mathrm{N}(I))-c,\]

where $c>0$ is a constant that does not depend on $I$.
\end{prop}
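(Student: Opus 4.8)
The plan is to follow the Margulis--Murillo strategy: bound below the displacement of every hyperbolic translation of $\rho(\mathcal{Q}^1(I))$ by controlling the number‑theoretic size of its reduced trace, and then feed this into the product formula \eqref{dispform}. Write $d=[K:\Q]$, let $\sigma_1,\dots,\sigma_r$ be the split real places of $A$ and $\tau_1,\dots,\tau_{d-r}$ the ramified ones, and for $\gamma=\gamma_0+\gamma_1\iota+\gamma_2 j+\gamma_3\kappa\in\mathcal{Q}^1(I)$ put $t=\tr(\gamma)=2\gamma_0\in\mathcal{O}_K$; by \eqref{tracesrelation} one has $\tr(\rho_i(\gamma))=\sigma_i(t)$. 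The one algebraic fact that does all the work is the identity $\mathrm n(\gamma-1)=\mathrm n(\gamma)-\tr(\gamma)+1=2-t$, combined with the observation that $\gamma-1$ has all four coordinates in $I$, so that its reduced norm lies in $I^2$; hence $2-t\in I^2$. (Using $\mathrm n(\gamma-1)$ rather than $\mathrm n(\gamma^2-1)=4-t^2$ is exactly what produces the constant $4/\sqrt r$ instead of $2/\sqrt r$.)

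First I would establish torsion‑freeness. If $\rho(\gamma)$ has finite order, each $\rho_i(\gamma)$ is elliptic or trivial, so $|\sigma_i(t)|=|\tr\rho_i(\gamma)|\le 2$ for $i=1,\dots,r$, while $|\tau_j(t)|\le 2$ for the ramified places by \eqref{taus}; therefore $|\mathrm N_{K/\Q}(2-t)|=\prod_v|2-v(t)|\le 4^d$. Since $2-t\in I^2$, either $2-t=0$ or $N(I)^2\le|\mathrm N_{K/\Q}(2-t)|\le4^d$. For $N(I)\ge 2^d$ the inequality $N(I)^2\le 4^d$ can hold only with equality throughout, which forces $v(t)=-2$ for every $v$, i.e. $t=-2$; and when $t=\pm2$ the pure quaternion $\gamma\mp1$ has reduced trace and reduced norm both zero, so $(\gamma\mp1)^2=0$, whence $\gamma=\pm1$ because $A$, ramifying at the $d-r\ge1$ non‑split archimedean places, is a division algebra. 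Thus $\rho(\gamma)$ is trivial in $\PSL(2,\R)^r$ and $\rho(\mathcal{Q}^1(I))$ is torsion free; being of finite index in the cocompact lattice $\rho(\mathcal{Q}^1)$ it is a cocompact torsion‑free lattice, so $M_I$ is a closed $X_r$‑manifold, and $[\mathcal{Q}^1:\mathcal{Q}^1(I)]\to\infty$ as $N(I)\to\infty$ gives $\vol(M_I)\to\infty$.

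For the systole, since $M_I$ is closed and nonpositively curved, $\sys_1(M_I)=\ell(g)$ for some hyperbolic translation $g=\rho(\gamma)$ with $\gamma\in\mathcal{Q}^1(I)\setminus\{\pm1\}$. Let $H=H(g)\subseteq\{1,\dots,r\}$ be the (nonempty) set of indices carrying a hyperbolic component and set $u_i=\cosh^{-1}(|\sigma_i(t)|/2)>0$ for $i\in H$, so that \eqref{dispform} reads $\ell(g)=2\big(\sum_{i\in H}u_i^2\big)^{1/2}$. As above $2-t\in I^2$, and $2-t\ne0$ (else the nilpotency argument would give $\gamma=1$), hence $N(I)^2\le\prod_v|2-v(t)|$. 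The places that are ramified, or split with index not in $H$, all satisfy $|v(t)|\le 2$ (by \eqref{taus} in the first case, by definition of $H$ in the second), so each contributes a factor $\le 4$, at most $4^{d-|H|}$ altogether; and for $i\in H$ one has $|2-\sigma_i(t)|\le 2+|\sigma_i(t)|=2+2\cosh u_i\le 4e^{u_i}$. Multiplying these estimates gives $N(I)^2\le 4^{d}e^{\sum_{i\in H}u_i}$, i.e. $\sum_{i\in H}u_i\ge 2\log N(I)-2d\log 2$. Finally Cauchy--Schwarz together with $|H|\le r$ yields
\[
\sys_1(M_I)=\ell(g)=2\Big(\sum_{i\in H}u_i^2\Big)^{1/2}\ \ge\ \frac{2}{\sqrt{|H|}}\sum_{i\in H}u_i\ \ge\ \frac{4}{\sqrt r}\log N(I)-c,\qquad c:=\frac{4d\log 2}{\sqrt r}>0,
\]
which is the assertion.

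The step I expect to be most delicate is the bookkeeping that yields the sharp constant: one must extract divisibility from the \emph{linear} quantity $2-t=\mathrm n(\gamma-1)$ so that each $u_i=\cosh^{-1}(|\sigma_i(t)|/2)$ appears linearly in the exponent via $|2-\sigma_i(t)|\le 4e^{u_i}$, and one must bound the contributions of the ramified and ``elliptic'' split places uniformly using \eqref{taus}. A secondary nuisance is the boundary case $N(I)=2^d$ in the torsion argument, which is settled by the nilpotency of $\gamma\mp1$ in the division algebra $A$.
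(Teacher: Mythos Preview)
Your proof is correct and follows essentially the same route as the paper's: both extract the key congruence $2-t\in I^2$ (you via the identity $\mathrm n(\gamma-1)=2-t$, the paper via the equivalent expansion $t-2=a\alpha_1^2+b\alpha_2^2-ab\alpha_3^2-(\alpha_0-1)^2$), bound $|\mathrm N_{K/\Q}(2-t)|$ above by $4^d$ times the hyperbolic contribution, and combine with Cauchy--Schwarz on \eqref{dispform} to reach the same constant $c=4d\log 2/\sqrt r$. The only cosmetic difference is that the paper obtains a \emph{strict} inequality $|\mathrm N_{K/\Q}(t-2)|<2^{2d}$ in the torsion case (since each $\rho_i(\alpha)$, being nontrivial, has $|\tr|<2$), whereas you allow equality and then dispatch the boundary case $t=\pm2$ by the nilpotency argument in the division algebra $A$; both are valid and the latter is what underlies the paper's earlier assertion that $\tr(\alpha)\ne\pm2$ for $\alpha\ne\pm1$.
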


\begin{proof}
For $\alpha=\alpha_0+\alpha_1\iota +\alpha_2j+\alpha_3\kappa \in \mathcal{Q}^1$,  $g_\alpha=\rho(\alpha)$ cannot be parabolic since $\rho(\mathcal{Q}^1)$ is cocompact. In particular, $\tr(\alpha) = \tr(\rho_1(\alpha)) \neq \pm 2$ which implies that $\alpha_0 \neq \pm 1$. So $g_\alpha$ must be either elliptic or hyperbolic. A necessary (and sufficient) condition for $g_\alpha$ to be elliptic is that $|\tr(\rho_i(\alpha))|<2$ for all $i=1,\ldots,r.$ By \eqref{tracesrelation} and \eqref{taus} we see that

\begin{equation}\label{25320.1}
\begin{split}
|\mathrm{N_{K\mid\Q}}(\tr(\alpha)-2)| &= \prod_{i=1}^r |\sigma_i(\tr(\alpha)) - 2| \prod_{k=1}^{d-r}|\tau_k(\tr(\alpha)) - 2|\\
                                      &\leq \prod_{i=1}^r (|\sigma_i(\tr(\alpha))| + 2) \prod_{k=1}^{d-r}(|\tau_k(\tr(\alpha))| + 2)\\
                                      &< 2^{2d}.  
\end{split}
\end{equation}

Here, $\mathrm{N}_{K\mid\Q}$ denotes the (field) norm of the extension $K\mid\Q$. We recall that, for a nonzero algebraic integer $x\in\cO_K$, $|\mathrm{N}_{K\mid\Q}(x)|$ is equal to  the norm of the ideal $x\cO_K$, denoted $\mathrm{N}(x)$. As observed above, $\tr(\alpha)-2 \neq 0$.

Now, if $\alpha \in \mathcal{Q}^1(I),$ it follows from 
\[\mathrm{n}(\alpha)=1 \mbox{ and } \alpha_0-1,\alpha_1,\alpha_2,\alpha_3 \in I\]
that

\[\tr(\alpha) - 2 = 2\alpha_0 - 2 =(\alpha_0^2 - 1) -(\alpha_0-1)^2 = a\alpha_1^2+b\alpha_2^2-ab\alpha_3^2-(\alpha_0-1)^2 \in I^2\]

so that, in particular,

\begin{equation}\label{tracong}
 \mathrm{N}(\tr(\alpha)-2) \ge \mathrm{N}(I)^2.   
\end{equation}

Hence, if $\rho(\mathcal{Q}^1(I))$ contains a torsion element, it follows from \eqref{25320.1} and \eqref{tracong} that

\[\mathrm{N}(I) < 2^d.\]

This proves the first part of the theorem. From now on we will assume that $\mathrm{N}(I) \geq 2^d$. We must estimate the displacement of a hyperbolic translation in $\rho(\mathcal{Q}^1(I))$.

If $\rho(\alpha) \in \rho(\mathcal{Q}^1(I))$ is one such hyperbolic translation, then we  may write $\{1,\ldots,r\}$ as a disjoint union $H(\alpha) \cup E(\alpha)$ where $|\tr(\rho_h(\alpha))|>2$ if $h \in H(\alpha)$ and $|\tr(\rho_e(\alpha))|<2$ if $e \in E(\alpha)$. By \eqref{dispform} and \eqref{tracesrelation} we have that

\[\ell(\rho(\alpha)) = 2\sqrt{\sum_{h \in H(\alpha)} \left[\cosh^{-1}\left(\frac{|\sigma_h(\tr(\alpha))|}{2} \right)\right]^2} \geq 2 \sqrt{ \sum_{h \in H(\alpha)} \log^2\left(\frac{|\sigma_h(\tr(\alpha))|}{2}\right)},\]
where the inequality follows from the observation that $\cosh^{-1}(x)=\log(x+\sqrt{x^2-1}) \ge \log(x)$ for all $x>1.$ By means of the convexity of the quadratic function, we obtain that

\[\ell(\rho(\alpha)) \geq \frac{2}{\sqrt{\# H(\alpha)}} \displaystyle \sum_{h \in H(\alpha)} \log \left(\frac{|\sigma_h(\tr(\alpha))|}{2}\right) \ge \frac{2}{\sqrt{r}}\log\left( \displaystyle \prod_{h \in H(\alpha)}|\sigma_h(\tr(\alpha))|2^{-\# H(\alpha)} \right).\]

On the other hand

\begin{align*}
|\mathrm{N}(\tr(\alpha)-2)| &\leq \prod_\sigma (|\sigma(\tr(\alpha))| + 2)\\
                                          &\leq 2^d \prod_\sigma \max\{|\sigma(\tr(\alpha))|,2\}\\
                                          &= 2^d \, 2^{d-\# H(\alpha)} \prod_{h \in H(\alpha)}|\sigma_h(\tr(\alpha))|.
\end{align*}

Thus, for any hyperbolic translation $\rho(\alpha)$ with $\alpha \in \mathcal{Q}^1(I)$, the following holds:

\[\ell(\rho(\alpha)) \geq \frac{2}{\sqrt{r}}\log\left(\frac{|\mathrm{N}(\tr(\alpha)-2)|}{2^{2d}}\right)   \ge \frac{2}{\sqrt{r}}\log(|\mathrm{N}(\tr(\alpha)-2)|)-\frac{4d\log(2)}{\sqrt{r}}.\]

We conclude the proof by using inequality \eqref{tracong}. Indeed, if we define $c:=\frac{4d\log(2)}{\sqrt{r}}$, then $c$ does not depend on $I$ and, for any closed geodesic $\beta$ on $M_I=\rho(\mathcal{Q}^1(I)) \backslash \Hy^r$, there exists $\alpha \in \mathcal{Q}^1(I)$ as above such that the axis of $\rho(\alpha)$ projects onto $\beta$ and
\[\ell(\rho(\alpha)) \ge \frac{4}{\sqrt{r}}\log(\mathrm{N}(I))-c.\]
Since $\beta$ was arbitrary, the theorem follows.
\end{proof}

\begin{proof}[Proof of Theorem \ref{sysvsvol}]
Let $M=\Gamma \backslash \Hy^r$ be a closed arithmetic orbifold, we have seen that $\Gamma$ is commensurable with $\rho(\mathcal{Q}^1)$ for some $K,A,\rho$ and $\mathcal{Q}$ as in subsection \ref{aritructure}. Thus, $\Gamma$ contains a finite index subgroup $\Gamma'$ such that $\Gamma' < \rho(\mathcal{Q}^1)$. Let $M'=\Gamma' \backslash \Hy^r$ be the corresponding finite covering of $M$. If we consider the sequence of ideals of $\mathcal{O}_K$ satisfying the hypothesis in Proposition \ref{systvsnorm} then the corresponding lattices  $\Gamma'(I)=\Gamma' \cap \rho(\mathcal{Q}^1(I))$ are torsion free and define congruence coverings  $M'_I:=\Gamma'(I) \backslash \Hy^n$ of $M'$ with volume 
\begin{equation*}
\vol(M'_I)=[\Gamma':\Gamma'(I)]\vol(M').    
\end{equation*}
Therefore, the asymptotic growth of $\vol(M'_I)$ depends only on the growth of $[\Gamma':\Gamma'(I)],$ where $I$ varies on the set of ideals of $\mathcal{O}_K$ with $\mathrm{N}(I) > 2^{d}.$

Since $\mathcal{Q}^1 \cap \mathrm{ker}(\rho)$ is finite with, say, $\nu$ elements, then  \[[\Gamma':\Gamma'(I)] \leq [\rho(\mathcal{Q}^1):\rho(\mathcal{Q}^1(I))] = \frac{[\mathcal{Q}^1:\mathcal{Q}^1(I)]}{\nu}.\]
  
In \cite[Corollary 4.6]{KSV07} it is proved that there exists a constant $\lambda=\lambda(A,\mathcal{Q})>0$ such that 
\begin{equation*}
[\mathcal{Q}^1:\mathcal{Q}^1(I)]  \leq \lambda N(I)^3.    
\end{equation*}

On the other hand, by Proposition \ref{systvsnorm} we have that
\begin{equation*}
 \sys_1(M'_I) \geq \sys_1(\rho(\mathcal{Q}^1(I)) \backslash \Hy^r) \ge \frac{4}{\sqrt{r}}\log(\mathrm{N}(I))-c_1,   
\end{equation*}
for some constant $c_1>0$ which does not depend on $I$.

Therefore,
$$\sys_1(M'_I) \ge \frac{4}{3\sqrt{r}}\log(\vol(M'_I))-c,$$
where $c$ does not depend on $I$.

Finally, by Proposition \ref{systvsnorm}, $\sys_1(M'_I)\to\infty$ when $N(I)\to\infty$. Since $\log(\vol(M'_I))\geq C' \sys_1(M'_I) - c'$, it follows that $\vol(M'_I)\to\infty$ as $N(I)\to\infty$.

\end{proof}


\begin{thebibliography}{xxxx}

\bibitem{Beardon12} A.~F.~Beardon, \emph{The geometry of discrete groups}, Graduate Texts in Mathematics, vol.91, Springer Science \& Business Media, 2012.

\bibitem{Bers70} L.~Bers, \emph{On boundaries of Teichm\"uller spaces and on Kleinian groups: I}, Annals of Mathematics (1970), p.570--600.

\bibitem{BB11}  V.~Blomer and F.~Brumley. \emph{On the Ramanujan conjecture over number fields}. Ann. of Math. (2) \textbf{174} (2011), no. 1, 581--605.

\bibitem{Bogopolski08} O.~Bogopolski, \emph{Introduction to group theory}, EMS Textbooks in Mathematics, vol. 6, European Mathematical Society, 2008.

\bibitem{Bor81} A.~Borel, \emph{Commensurability classes and volumes of hyperbolic 3-manifolds}, Ann. Scuola Norm. Sup. Pisa Cl. Sci. \textbf{8} (1981), no. 4, 1--33.

\bibitem{Buser92} P.~Buser, \emph{Geometry and spectra of compact {R}iemann surfaces}, Progress in Mathematics, vol. 106, Birkh\"{a}user Boston, Inc., Boston, MA, 1992.

\bibitem{BS94} P.~Buser and P.~Sarnak, \emph{On the period matrix of a Riemann surface of large
genus, With an appendix by J. Conway and N. Sloane}, Invent. Math. \textbf{117(1)} (1994) 27--56.

\bibitem{D17} C.~D\'oria, \emph{Asymptotic properties of the set of systoles of arithmetic Riemann surfaces}. Preprint. {\it arXiv:1710.04720}. To appear in IMRN.

\bibitem{Farb11} B.~Farb and D.~Margalit, \emph{A primer on mapping class groups}, Princeton University Press, 2011.

\bibitem{Fenchel89} W.~Fenchel, \emph{Elementary geometry in hyperbolic space}, De Gruyter Studies in Mathematics, vol. 11, Walter de Gruyter \& Co., Berlin, 1989.

\bibitem{Gen12} S.~Geninska, \emph{Examples of infinite covolume subgroups of $\PSLTR^r$ with big limit sets}, Math.
Z. \textbf{272} (2012), no. 1-2, 389--404.

\bibitem{Greenberg77} L.~Greenberg, \emph{Finiteness theorems for {F}uchsian and {K}leinian groups}, Discrete groups and automorphic functions, {P}roc. {C}onf.,
{C}ambridge (1975) p. 199--257.

\bibitem{GL14} L.~Guth and A.~Lubotzky, \emph{Quantum error correcting codes and $4$-dimensional arithmetic hyperbolic manifolds}, J. Math. Phys., {\bf 55} (2014), 082202-1 -- 082202-13.

\bibitem{Imayoshi12} Y.~Imayoshi and M.~Taniguchi, \emph{An introduction to Teichm{\"u}ller spaces}, Springer Science \& Business Media, 2012.

\bibitem{Jeon19} B.~Jeon, \emph{Realizing algebraic invariants of hyperbolic surfaces}, Trans. Amer. Math. Soc., \textbf{1} (2019), 147--172.

\bibitem{KSV07} M.~Katz, M.~Schaps and U.~Vishne, \emph{Logarithmic growth of systole of arithmetic Riemann surfaces along congruence subgroups}, J. Diff. Geom., {\bf 76} (2007) 399--422.

\bibitem{K15} R.~A.~Kucharczyk, \emph{Modular embeddings and rigidity for Fuchsian groups}, Acta Arith. \textbf{169} (2015), no. 1,pp. 77--100.

\bibitem{Lang} S.~Lang, \emph{Fundamentals of Diophantine Geometry}, Springer-Verlag New York, 1983.

\bibitem{Luo94} W.~Luo and P.~Sarnak, \emph{Number variance for arithmetic hyperbolic surfaces}, Communications in mathematical physics, Springer \textbf{161} (1994), no.2, p. 419--432.

\bibitem{gtmReid03} C.~Maclachlan and A.~W.~Reid, \emph{The arithmetic of hyperbolic 3-manifolds}, Graduate Texts in Mathematics, vol. 219, Springer, 2003.

\bibitem{M82} G.~A.~Margulis, \emph{Explicit construction of graphs without short cycles and low density codes},  Combinatorica, {\bf 2} (1982), 71--78.

\bibitem{Murillo17} P.~G.~P.~Murillo, \emph{On growth of systole along congruence coverings of Hilbert modular varieties}, Algebr. Geom. Topol. \textbf{17} (2017), no. 5, 2753--2762.

\bibitem{Nakanishi16} G.~Nakamura and T.~Nakanishi, \emph{Parametrizations of Teichm{\"u}ller spaces by trace functions and action of mapping class groups}, Conformal Geometry and Dynamics of the American Mathematical Society \textbf{20} (2016), no. 2, p. 25--42.

\bibitem{Nakanishi98} T.~Nakanishi and M.~N{\"a}{\"a}t{\"a}nen, \emph{Parametrization of Teichm{\"u}ller space by length parameters}, Analysis And Topology: A Volume Dedicated to the Memory of S Stoilow, World Scientific (1998), p. 541--560.

\bibitem{Neukirch99} J.~Neukirch, \emph{Algebraic number theory}, Grundlehren der Mathematischen Wissenschaften, vol. 322, Springer-Verlag, Berlin, 1999.

\bibitem{Okumura90} Y.~Okumura, \emph{On the global real analytic coordinates for Teichm{\"u}ller spaces}, Journal of the Mathematical Society of Japan \textbf{42} (1990), no. 1, p. 91--101.

\bibitem{Reid92} A. W.~Reid, \emph{Isospectrality and commensurability of arithmetic hyperbolic {$2$}- and {$3$}-manifolds}, Duke Math. J. \textbf{65} (1992), no.2, p. 215--228.

\bibitem{SW} P.~S.~Schaller and J.~Wolfart, \emph{Semi-arithmetic Fuchsian groups and modular embeddings}, J. Lond. Math. Soc. \textbf{61} (2000), no. 2, p. 13--24.

\bibitem{PW19} J.~Schlage-Puchta and J.~Wolfart, \emph{How many quasiplatonic surfaces?}, Arch. Math. \textbf{86} (2006), no.2, p. 129--132. 

\bibitem{Takeuchi75} K.~Takeuchi, \emph{A characterization of arithmetic Fuchsian groups}, Journal of the Mathematical Society of Japan \textbf{27} (1975), no. 4, p. 600--612.

\bibitem{Takeuchi77} K.~Takeuchi, \emph{Arithmetic triangle groups}, Journal of the Mathematical Society of Japan \textbf{29} (1977), no. 1, p. 91--106.

\bibitem{Weil60} A.~Weil. \emph{On discrete subgroups of Lie groups}, Annals of Mathematics (1960), p. 369--384.


\end{thebibliography}

\end{document}